\documentclass[11pt]{article}
\usepackage[T1]{fontenc}
\usepackage{lmodern}
\usepackage[margin=1in]{geometry}
\usepackage{amsmath,amssymb,amsthm,mathtools}
\usepackage{booktabs,graphicx}
\usepackage{float}
\usepackage{microtype}
\usepackage{xcolor}
\usepackage[colorlinks=true,allcolors=blue!55!black]{hyperref}
\usepackage{enumitem}
\setlist{nosep}
\newtheorem{theorem}{Theorem}[section]
\newtheorem{proposition}[theorem]{Proposition}
\newtheorem{lemma}[theorem]{Lemma}
\newtheorem{corollary}[theorem]{Corollary}
\theoremstyle{definition}
\newtheorem{assumption}[theorem]{Assumption}
\newtheorem{remark}[theorem]{Remark}
\newcommand{\R}{\mathbb R}
\newcommand{\E}{\mathbb E}
\newcommand{\dd}{\mathrm d}
\newcommand{\KL}{\operatorname{KL}}
\newcommand{\Id}{I}
\newcommand{\cH}{\mathcal H}
\newcommand{\cI}{\mathcal I}
\newcommand{\eps}{\varepsilon}

\title{Global Convergence of Third-Order Langevin Dynamics for Non-Convex Optimization via Simulated Annealing}
\author{
Yingli Wang\thanks{School of Mathematical Sciences, Fudan University, Shanghai, People's Republic of China; \texttt{yingliwang@fudan.edu.cn}}
\and
Lingjiong Zhu\thanks{Department of Mathematics, Florida State University, Tallahassee, Florida, United States of America; \texttt{zhu@math.fsu.edu}}}
\date{\today}

\begin{document}
\maketitle

\begin{abstract}
We study global convergence guarantees 
of third-order Langevin dynamics for non-convex optimization via simulated annealing
with fixed friction and decreasing noise.  An explicit three-block distorted entropy
transfers dissipation from the noisy auxiliary variable to the full state.
Under dissipativity, regularity, and low-temperature functional-inequality
assumptions, logarithmic cooling drives the objective values to the global
minimum in probability at the barrier-controlled kinetic rate.  For the
exact-force-integral and midpoint three-stage discretizations, polynomially
decreasing steps preserve this rate on the physical time scale.  The cubic
local endpoint estimate gives a less restrictive sufficient step-size condition
than the available frozen-force kinetic result.  A comparison with the
one-gradient UBU integrator shows how its centered stochastic local error leads,
under the same strong-coupling analysis, to a smaller sufficient iteration
exponent. Numerical experiments
are conducted to illustrate our theory. For a double well objective, third-order Langevin terminal-success point
estimates are higher than UBU at both a common horizon and an equal gradient
budget. For a high-dimensional nonconvex neural-network objective using synthetic data,
independently tuned UBU and third-order Langevin schemes both outperform overdamped
Langevin dynamics; the third-order Langevin point estimate is higher.  
For the same neural-network objective on real data, we show the same point-estimate
ordering for best-basin probability and post-quench test accuracy.\footnote{Numerical
code and associated experiment results are publicly available at
\url{https://github.com/gagawjbytw/simulated-annealing-third-order-langevin}.}
\end{abstract}

\section{Introduction}
Consider the non-convex optimization problem:
\begin{equation}\label{opt:problem}
\min_{x\in\mathbb{R}^{d}}U(x),
\end{equation}
where $U:\mathbb{R}^{d}\rightarrow\mathbb{R}$ is the given objective. 
The optimization problem \eqref{opt:problem} appears
in many applications in machine learning, including
empirical risk minimizations in deep learning where $U$ is typically non-convex.
Many algorithms have been used to solve the optimization problem \eqref{opt:problem}. 
Among these, gradient descent, stochastic gradient and their variance-reduced or momentum-based variants come with guarantees for finding a local minimizer or a stationary point for non-convex problems. 
While convergence to a local minimum can be satisfactory in some settings 
\cite{ge2017learning,du2017gradient}, 
methods with global convergence guarantees are also desirable and preferable in many other settings \cite{hazan2016graduated,simsekli-async-MCMC-18}.

\paragraph{Simulated annealing.}
Simulated annealing replaces direct descent in a nonconvex landscape by a
time-inhomogeneous stochastic dynamics whose instantaneous equilibrium is a Gibbs
law. The most classical one is based on the \textit{overdamped Langevin dynamics} (OLD): 
\begin{align}\label{eqn:OLD}
\dd X_{t}=-\nabla U(X_{t})\dd t+\sqrt{2\varepsilon_{t}}\dd B_{t},
\end{align}
where $B_{t}$ is a standard Brownian motion and $\varepsilon_{t}$ is a temperature parameter
that depends on $t$.
The simulated annealing method was introduced as a computational principle in
\cite{KirkpatrickGelattVecchi1983}. Its diffusion formulation was developed in
\cite{GemanHwang1986,ChiangHwangSheu1987,Royer1989}: at a fixed temperature
$\eps_{t}\equiv\eps$, the stationary position law is proportional to $e^{-U/\eps}$, while a decreasing
temperature must retain enough thermal energy for the process to cross the deepest
relevant barriers. This competition leads to the logarithmic cooling scale. Spectral
gap and Sobolev methods identify the corresponding low-temperature obstruction
\cite{HolleyStroock1988,HolleyKusuokaStroock1989}, and Miclo's free-energy analysis
\cite{Miclo1992} gives quantitative convergence. Later work extended the theory to
weaker confinement and slowly growing potentials
\cite{Zitt2008,FournierTardif2021}. Tang and Zhou \cite{TangZhou2023} obtained
convergence rates for continuous-time overdamped annealing using
Eyring--Kramers estimates. Tang, Wu, and Zhou \cite{TangWuZhou2024} established
corresponding rates and step-size conditions for its discrete-time counterpart.
These results identify the landscape-dependent critical cooling constant and
give discrete-time convergence guarantees that control accumulated
discretization error on the same physical-time scale.

\paragraph{Kinetic annealing and hypocoercivity.}
There have been many variants of overdamped Langevin dynamics, 
and among these, the momentum-based kinetic or \textit{underdamped Langevin dynamics} (ULD) is
one of the most popular:
\begin{align}\label{eqn:ULD}
 \dd X_t&=V_t\dd t,\qquad
 \dd V_t=(-\nabla U(X_t)-\gamma V_t)\dd t
                 +\sqrt{2\gamma\eps_t}\dd B_t,
\end{align}
where $\gamma>0$ is the friction coefficient, $\varepsilon_{t}$ is a temperature parameter
that depends on $t$,
and $B_{t}$ is a standard $d$-dimensional Brownian motion.
At a fixed temperature
$\eps_{t}\equiv\eps$, the stationary law is proportional to $e^{-\frac{1}{\eps}(U(x)+\frac{1}{2}|v|^{2})}$
whose position marginal law is proportional to $e^{-U/\eps}$, 
the same as in the overdamped case.
Adding momentum can improve mixing at a fixed temperature, but it makes the noise
degenerate: entropy is dissipated directly only in the velocity variable. The
hypocoercive framework transfers this dissipation to the full state through the
transport structure; see
\cite{ArnoldEtAl2001,Herau2007,Villani2009}. Lyapunov and coupling methods give
ergodicity and quantitative contraction for kinetic Langevin dynamics under broad
conditions \cite{MattinglyStuartHigham2002,EberleGuillinZimmer2019}, while
nonasymptotic sampling bounds and discretizations are developed in
\cite{ChengEtAl2018,MaEtAl2021}.

For simulated annealing, Monmarch\'e \cite{Monmarche2018} introduced a distorted
entropy adapted to a kinetic process in which the friction grows as the temperature
decreases. Journel and Monmarch\'e \cite{JournelMonmarche2022} treated the
fixed-friction, vanishing-noise formulation under substantially weaker potential
conditions and also proved failure results for cooling schedules that are too fast.
He, Tan, and Wu \cite{HeTanWu2024} obtained quantitative convergence rates for that
formulation and for a particular time discretization. Their calculation adapts the
matrix-valued entropic method of Ma et al.\ \cite{MaEtAl2021}; related discrete
Hamiltonian entropy estimates are given in \cite{Monmarche2024}. At low temperature,
the rate is controlled by a logarithmic Sobolev constant. Landscape decompositions
such as those of Menz and Schlichting \cite{MenzSchlichting2014} provide its
exponential scale, while the
Holley--Stroock perturbation principle \cite{HolleyStroock1987} provides a simple
explicit bound for bounded perturbations of a quadratic potential. Time-dependent
Fisher-information techniques offer another perspective on inhomogeneous diffusions
\cite{FengZuoLi2024}.

\paragraph{Third-order and generalized Langevin dynamics.}
Generalized Langevin equations introduce memory or auxiliary variables while
preserving the desired Gibbs marginal. Their Markovian embeddings and scaling
limits have been studied, for example, in
\cite{OttobrePavliotis2011,PavliotisStoltzVaes2021}. The \textit{third-order Langevin dynamics} of
Mou et al. \cite{MouEtAl2021}, equation (8), uses two auxiliary variables and places
noise only in the last component:  
\begin{equation}\label{eqn:third:order}
\begin{aligned}
 \dd X_t&=V_t\dd t,\\
 \dd V_t&=(-\nabla U(X_t)+\lambda Z_t)\dd t,\\
 \dd Z_t&=(-\lambda V_t-\gamma Z_t)\dd t
                 +\sqrt{2\gamma\eps_t}\,\dd B_t,
\end{aligned}
\end{equation}
where $\lambda,\gamma>0$, $\varepsilon_{t}$ is a temperature parameter
that depends on $t$, and 
$B_{t}$ is a standard $d$-dimensional Brownian motion.
The resulting smoother force integration supports
accelerated sampling guarantees for strongly log-concave targets and a specific
three-stage numerical scheme. Those guarantees apply under strong convexity and a
positive lower Hessian bound. The third-order Langevin dynamics \eqref{eqn:third:order} 
and its generalizations have been previously studied in the context
of sampling \cite{MouEtAl2021,DangGurbuzbalabanIslamYaoZhu2025,high-order-Liu-2025}. 
The nonconvex global-minimization setting \eqref{opt:problem} considered
in this paper using simulated annealing analysis to the best of our knowledge is novel
for the third-order Langevin dynamics \eqref{eqn:third:order}.

\paragraph{Global optimization.}
Several related lines of work pursue global optimization with Langevin-type
processes through fixed-temperature, discretized, and stochastic gradient
formulations.
Nonasymptotic guarantees for fixed-temperature or discretized diffusions appear in
\cite{RaginskyRakhlinTelgarsky2017,ErdogduMackeyShamir2018,xu2018global,Chau2019,Zhang2019}; momentum-based
stochastic-gradient Hamiltonian methods are analyzed in
\cite{GaoGurbuzbalabanZhu2022,Chau2022}. Temperature selection has also been formulated as
a state-dependent stochastic control problem \cite{GaoXuZhou2022}. More recently,
Herty and Zanella \cite{HertyZanella2026} proposed a feedback-controlled temperature
for an interacting kinetic particle model and proved entropy decay for its
Boltzmann-type mean-field description. Together, these approaches clarify the
broader role of thermal exploration. The present model follows a logarithmically
cooled diffusion with its associated physical time scale.

\paragraph{Eyring--Kramers laws and metastability.}
At a fixed low temperature, the Arrhenius exponent determines the dominant
transition scale, whereas the Eyring--Kramers law also identifies the sharp
prefactor. For reversible overdamped Langevin dynamics, the potential-theoretic
approach of Bovier et al.\ \cite{BovierEtAl2004} gives the classical formula
for capacities and mean exit times. Bouchet and Reygner
\cite{BouchetReygner2016} treat non-reversible Langevin dynamics, while the
non-selfadjoint capacity principles of Landim, Mariani, and Seo
\cite{LandimMarianiSeo2019} and the Eyring--Kramers formula of Lee and Seo
\cite{LeeSeo2022} develop the corresponding nonreversible Gibbs theory.

On the
spectral side, Bony, Le Peutrec, and Michel \cite{BonyLePeutrecMichel2025}
establish Eyring--Kramers formulas for a broad class of Fokker--Planck type
operators with Gibbs stationary measures.
The hypoelliptic underdamped process requires additional ideas because the
standard elliptic potential theory does not apply directly. Lee, Ramil, and Seo
\cite{LeeRamilSeo2026} recently derived the mean-transition-time
Eyring--Kramers law for underdamped Langevin dynamics in a double-well
landscape. This result provides the direct second-order benchmark for the
third-order metastability theory described next.

Recent work develops a fixed-temperature metastability theory for the same
hypoelliptic chain. He et al.\ \cite{HeLiWangZhu2026} construct weak
equilibrium measures and capacities and prove capacity--hitting identities. Using
this framework, Wang and Zhu \cite{WangZhu2026} prove an Eyring--Kramers law for a
double-well potential with a unique index-one saddle. The usual Arrhenius exponent
is retained, but under matched kinetic normalizations the sharp transition-time
prefactor is strictly smaller than its underdamped counterpart. Their argument
extends the underdamped strategy of Lee, Ramil, and Seo \cite{LeeRamilSeo2026} to the step-three chain.
Together, these results provide fixed-temperature metastable transition
asymptotics. We develop the
corresponding time-inhomogeneous simulated annealing convergence theory below.

Standard diffusion annealing lowers the temperature by reducing noise while keeping
the drift fixed \cite{GemanHwang1986,ChiangHwangSheu1987,Royer1989}. For kinetic
Langevin annealing, relative entropy directly dissipates only the velocity component
of the Fisher information \cite[Section~2.2]{HeTanWu2024}. The same degeneracy is
present in the higher-order chain of Mou et al.\ \cite{MouEtAl2021}, where noise acts
only on the last component; when its amplitude is cooled, the coefficient of this
direct dissipation vanishes.

Chak, Kantas, and Pavliotis
\cite{ChakKantasPavliotis2023}, equation (1.3) and Theorem 2.7, instead analyze
\begin{equation}\label{eq:ckp}
 \dd X_t=Y_t\dd t,\qquad
 \dd Y_t=\left(-\nabla U(X_t)+\Lambda^\top Z_t\right)\dd t,\qquad
 \dd Z_t=\left(-\Lambda Y_t-T_t^{-1}A Z_t\right)\dd t+\Sigma\dd B_t,
\end{equation}
with $\Sigma\Sigma^\top=2A$, where $B_{t}$ is a standard $d$-dimensional Brownian motion.
This parametrization keeps the noise matrix fixed and increases the effective
friction $T_t^{-1}A$ as the temperature decreases. At each frozen temperature it
has the desired Gibbs law. The authors adopt this parametrization to follow the
hypocoercive framework of Monmarch\'e \cite{Monmarche2018}; the additional degeneracy
of the third-order chain is handled by a tailored distorted entropy and approximation
by nondegenerate diffusions. They prove a continuous-time annealing rate and present
numerical experiments.

We combine the fixed-friction annealing strategy of He, Tan, and Wu
\cite{HeTanWu2024} with the third-order diffusion and three-stage scheme of Mou et
al.\ \cite{MouEtAl2021}. 
He, Tan, and Wu \cite{HeTanWu2024} return to the fixed-friction formulation with
vanishing noise. They replace relative entropy by a temperature-dependent distorted
entropy whose entropy weight is of order $\eps^{-1}$. A matrix hypocoercivity
calculation adapted from Ma et al.\ \cite{MaEtAl2021} transfers the
velocity-direction dissipation to the position variable, while the low-temperature
\textit{logarithmic Sobolev inequality} (LSI) converts the resulting Fisher-information bound into frozen-temperature
contraction. Uniform
moment bounds control the temperature-variation term, so logarithmic cooling
preserves the contraction needed for annealing. We apply this strategy to the
additional variable $Z$: an explicit
three-block Fisher-information matrix transfers dissipation from $Z$ through $V$
to $X$ while $\lambda$ and $\gamma$ remain fixed and the noise amplitude decreases.
We also establish convergence for the associated time-discrete schemes.
Higher-order annealed Langevin methods have been used for posterior sampling in
linear inverse problems \cite{ZilbersteinEtAl2024}; here we study the nonconvex
zero-temperature limit.

\paragraph{Contribution.}
Our main contributions can be summarized as follows.

\begin{enumerate}[label=(\roman*)]
\item An explicit three-block dissipation certificate
(Lemma~\ref{lem:matrix}) yields the continuous-time annealing rate in
Theorem~\ref{thm:main}.  Lemma~\ref{lem:closure} extends the entropy argument to
rougher initial laws, and Proposition~\ref{prop:restart} provides the uniform
restart estimate needed for discrete comparison.

\item Local coupling, moment control, grid-point regularization, and
exact-transition smoothing yield Theorem~\ref{thm:discrete} for the exact-force
and midpoint schemes.  Corollary~\ref{cor:iterationgain} gives the resulting
rate-preserving iteration gain.  Applying the same recent-window argument to
UBU splitting scheme in Theorem~\ref{thm:ubu} shows how the additional auxiliary level delays
noise propagation to position and gives the third-order midpoint scheme a
sharper sufficient discretization bound.

\item Three reproducible annealing experiments compare overdamped, underdamped,
and third-order Langevin dynamics after validation-set tuning and independent
evaluation.  A tilted double well uses theory-driven cooling and step grids; an
$88$-dimensional regularized two-layer tanh objective on synthetic data uses independent two-stage
parameter tuning; and a $248$-dimensional example evaluates the same
nonconvex objective on real data and a fixed held-out split.  Both kinetic
schemes outperform overdamped Langevin dynamics in the synthetic and real-data
neural-network examples.  The third-order Langevin best-basin point estimates are higher
than UBU in both examples; see Section~\ref{sec:numerics}.
\end{enumerate}

The resulting exponents retain the same landscape barrier $D$ that appears in the
low-temperature LSI. At the continuous-time level, Theorem~\ref{thm:main} matches
the polynomial exponent of the kinetic annealing rate. The distinct quantitative
gain proved here appears in the discretization: the $\mathcal{O}(h^3)$ local endpoint
comparison permits a less restrictive sufficient step-size threshold than the
kinetic result of He, Tan, and Wu \cite{HeTanWu2024}. Sharp metastable prefactors offer a
complementary way to distinguish the dynamics beyond the common barrier exponent.
Table~\ref{tab:introdiscretecomparison} summarizes the comparison for discretized algorithms.
Write $\vartheta:=D/E<1$ and
\[
 r_{\rm c}(\delta):=\min\left\{\frac{\delta}{E},
                         \frac{1-\vartheta}{2}\right\},
\]
the common continuous-time exponent. For polynomial grids
$h_k\asymp s_k^{-a}$, where $s_k=\tau_k+t_0$ is shifted accumulated physical
time, the currently available sufficient conditions give the following
rate-preserving thresholds
and iteration exponents.
The third column in Table~\ref{tab:introdiscretecomparison} follows from
$\tau_k=\sum_{j=0}^{k-1}h_j$ and $h_j\asymp s_j^{-a}$, which imply
$s_k\asymp k^{1/(1+a)}$. Hence a physical-time estimate
$\mathbb P(U(X_k)>\delta)\lesssim s_k^{-r_{\rm c}(\delta)+\alpha}$
becomes, after an arbitrarily small adjustment of $\alpha$,
\[
 \mathbb P(U(X_k)>\delta)
 \lesssim k^{-r_{\rm c}(\delta)/(1+a)+\alpha},
\]
where $X_{k}$ is the $k$-th iterate of the corresponding algorithm and without loss of generality we assume that
$U$ has minimum zero.
The fourth column in Table~\ref{tab:introdiscretecomparison} uses the smallest value of $a$ allowed by the
rate-preserving condition in the second column, since this maximizes the
resulting iteration exponent.

\begin{table}[ht]
\centering
\small
\renewcommand{\arraystretch}{1.25}
\begin{tabular}{@{}p{0.27\textwidth}p{0.27\textwidth}p{0.18\textwidth}p{0.18\textwidth}@{}}
\toprule
Method and source & Sufficient rate-preserving grid condition
& Physical time after\newline $k$ steps & Proved iteration\newline exponent \\
\midrule
OLD (Euler)\newline
\cite{TangWuZhou2024}
& $h_ks_k=\mathcal{O}(1)$; equivalently $a\geq1$
& $s_k\asymp k^{1/2}$
& $r_{\rm c}(\delta)/2$ \\
ULD (frozen-force)\newline
\cite[Theorem~2.8]{HeTanWu2024}
& $h_k\sqrt{s_k}=\mathcal{O}(1)$; equivalently $a\geq1/2$
& $s_k\asymp k^{2/3}$
& $2r_{\rm c}(\delta)/3$ \\
ULD (UBU)\newline
(Corollary~\ref{cor:ubuiteration})
& $a\geq(1+\vartheta)/3$
& $s_k\asymp k^{3/(4+\vartheta)}$
& $3r_{\rm c}(\delta)/(4+\vartheta)$ \\
Third-order LD (midpoint)\newline
(Corollary~\ref{cor:iterationgain})
& $a\geq a_*=(1+\vartheta)/4$
& $s_k\asymp k^{4/(5+\vartheta)}$
& $4r_{\rm c}(\delta)/(5+\vartheta)$ \\
\bottomrule
\end{tabular}
\caption{Comparison of available and presently proved sufficient grid conditions
for discrete annealing. Arbitrarily small losses in the
iteration exponents are suppressed.}
\label{tab:introdiscretecomparison}
\end{table}

Since $\vartheta<1$, the third-order Langevin midpoint exponent is strictly larger
than each of the other three exponents in
Table~\ref{tab:introdiscretecomparison}. The two underdamped bounds are not
uniformly ordered: the UBU exponent is larger than the frozen-force exponent
when $\vartheta<1/2$, equal to it when $\vartheta=1/2$, and smaller when
$\vartheta>1/2$. This comparison concerns proved sufficient conditions rather
than lower bounds or an intrinsic ordering of the integrators.

\paragraph{Organization of the paper.}
Section~\ref{sec:model} introduces the model and assumptions.
Section~\ref{sec:continuous} proves the continuous-time result, and
Section~\ref{sec:discrete} treats the numerical schemes and their convergence.
Section~\ref{sec:numerics} gives a numerical illustration, and
Section~\ref{sec:conclusion} concludes. Appendix~\ref{sec:ubuappendix} contains the UBU
comparison theorem and its proof.

\section{Model Setup and Assumptions}\label{sec:model}

For fixed $\lambda,\gamma>0$, we recall from \eqref{eqn:third:order} the \textit{third-order Langevin dynamics} 
$S_t=(X_t,V_t,Z_t)\in\R^{3d}$, that satisfies:
\begin{equation}\label{eq:model}
\begin{aligned}
 \dd X_t&=V_t\dd t,\\
 \dd V_t&=(-\nabla U(X_t)+\lambda Z_t)\dd t,\\
 \dd Z_t&=(-\lambda V_t-\gamma Z_t)\dd t
                 +\sqrt{2\gamma\eps_t}\,\dd B_t,
\end{aligned}
\end{equation}
where $\varepsilon_{t}$ is a temperature parameter
that depends on $t$ and $B_{t}$ is a standard $d$-dimensional Brownian motion.
Define the energy and invariant Gibbs law by
\begin{equation}\label{eq:gibbs}
 H(x,v,z):=U(x)+\frac12|v|^2+\frac12|z|^2,
 \qquad \pi_\eps(\dd x\,\dd v\,\dd z)
   :=\mathcal Z_\eps^{-1}e^{-H(x,v,z)/\eps}\dd x\,\dd v\,\dd z.
\end{equation}
For $\eps>0$, define the frozen-temperature generator
\begin{equation}\label{eq:generator}
 L_\eps:=v\cdot\nabla_x+(-\nabla U+\lambda z)\cdot\nabla_v
       +(-\lambda v-\gamma z)\cdot\nabla_z+\gamma\eps\Delta_z.
\end{equation}
The dynamics \eqref{eq:model} is time-inhomogeneous, with generator
$L_{\eps_t}$ at time $t$; when $\eps_t\equiv\eps$ is constant, it is generated by $L_\eps$.
The transport part preserves $H$ and has zero divergence; the last friction/noise
pair preserves $\mathcal N(0,\eps\Id_d)$. Hence $\pi_\eps$ is invariant for every
fixed $\eps>0$.

To recover the notation of Mou et al. \cite{MouEtAl2021}, write their potential as $F$ and their
smoothness parameter as $L_0$. Set $U=F/L_0$, $\lambda=\gamma_{\rm Mou}$,
$\gamma=\xi_{\rm Mou}$, and $\eps_t=T_t/L_0$. Equation \eqref{eq:model} then has
force $-\nabla F/L_0$ and noise $\sqrt{2\xi_{\rm Mou}T_t/L_0}$, with position
law proportional to $e^{-F/T_t}$ at frozen temperature. This makes the
normalization of both the potential and the cooling constant explicit.
Replacing $F$ by $F/T_t$ while keeping $L_0$ fixed would define a different
time-dependent force.
Likewise, Chak, Kantas, and Pavliotis
\cite[Theorem~2.7]{ChakKantasPavliotis2023}, applied to \eqref{eq:ckp}, assume a fixed
matrix $A$, whereas $A=\gamma\eps_t\Id_d$ varies with time under the present
cooling convention.

\begin{assumption}[Potential and cooling]\label{ass:potential}
The function $U\in C^4(\R^d)$ has minimum zero, bounded Hessian
$\|\nabla^2U\|_{\rm op}\leq K$, and satisfies
$x\cdot\nabla U(x)\geq r|x|^2-m$ for some $r>0,m\geq0$.
Its third and fourth derivatives have at most polynomial growth.
The temperature $\eps_t\in C^1([0,\infty);(0,\infty))$ is non-increasing in $t$. For
some $t_0>1$ and $E>0$,
\begin{equation}\label{eq:coolingassumption}
 \eps_t\log(t+t_0)\longrightarrow E,
 \qquad
 |\eps_t'|(1+\eps_t^{-3})
 \leq C\frac{\log(t+t_0)}{t+t_0}
 \quad\text{for all sufficiently large }t.
\end{equation}
\end{assumption}

The canonical schedule $\eps_t=E/\log(t+t_0)$ satisfies
\eqref{eq:coolingassumption}.

\begin{assumption}[Continuous-time initial law]\label{ass:initial}
The initial law $\nu_0$ of \eqref{eq:model} is absolutely continuous, has a finite
second moment, and finite
relative Fisher information with respect to $\pi_{\eps_0}$:
\begin{equation}\label{eq:initialfisher}
 \cI(\nu_0\mid\pi_{\eps_0})
 :=\int_{\R^{3d}}
 \left|\nabla\log\frac{\dd\nu_0}{\dd\pi_{\eps_0}}\right|^2\dd\nu_0
 <\infty.
\end{equation}
\end{assumption}

\begin{assumption}[LSI barrier]\label{ass:lsi}
Let $Z_\eps:=\int_{\R^d}e^{-U(y)/\eps}\dd y$ and
$\mu_\eps(\dd x)=Z_\eps^{-1}e^{-U(x)/\eps}\dd x$ be the position
Gibbs measure. There is $D\geq0$ such that for every $\eta>0$ there are
$C_\eta<\infty$ and $\eps_\eta>0$ for which
\begin{equation}\label{eq:lsi}
 \KL(\nu\mid\mu_\eps)\leq C_{\rm LS}^{\rm pos}(\eps)
       \cI(\nu\mid\mu_\eps),
 \qquad C_{\rm LS}^{\rm pos}(\eps)\leq C_\eta e^{(D+\eta)/\eps},
 \quad 0<\eps\leq\eps_\eta.
\end{equation}
Here $\cI(\nu\mid\mu):=\int_{\R^d}|\nabla\log(\dd\nu/\dd\mu)|^2\dd\nu$, and we assume
$D<E$ where $E$ is the cooling
constant from Assumption~\ref{ass:potential}.
\end{assumption}

\begin{assumption}[Additional discrete regularity]\label{ass:discrete}
For the discrete-time results, we further assume that
$\|\nabla^3U\|_\infty<\infty$ and that the initial law for the numerical
scheme has a finite fourth moment. The initial law may be singular, including
a point mass corresponding to deterministic initialization.
\end{assumption}

In Assumption~\ref{ass:lsi}, $D$ denotes the LSI barrier. The velocity and
auxiliary-variable factors $\mathcal N(0,\eps\Id_d)(\dd v)$ and
$\mathcal N(0,\eps\Id_d)(\dd z)$
satisfy the LSI with
constant $\eps/2$. For each $\eta>0$ and
$0<\eps\leq\eps_\eta$, tensorization therefore gives the full Gibbs law
$\pi_\eps$ an admissible LSI constant
\begin{equation}\label{eq:tensorizedlsi}
 C_{\rm LS}(\eps)=\max\{C_{\rm LS}^{\rm pos}(\eps),\eps/2\},
\end{equation}
which has the same exponential bound as in \eqref{eq:lsi}. Under the landscape
hypotheses of Menz and Schlichting \cite{MenzSchlichting2014}, $D$ can be identified
with the corresponding critical depth. The bounded-Hessian and dissipativity conditions
support the dynamical estimates, while the landscape hypotheses determine the
low-temperature Gibbs mixing across multiple wells.

\begin{remark}[Comparison with related assumptions in the literature]\label{rem:assumptioncomparison}
He, Tan, and Wu \cite[Assumptions~2.1 and 2.3]{HeTanWu2024} assume that
$U\in C^\infty$, all derivatives of $U$ have at most polynomial growth,
$\nabla U$ is globally Lipschitz, and $U$ is dissipative. They also require
finitely many local minima with nondegenerate Hessians. These potential
conditions imply the potential regularity in Assumption~\ref{ass:potential};
their LSI estimate \cite[Proposition~3.1]{HeTanWu2024} gives
Assumption~\ref{ass:lsi} with $D$ equal to their critical depth $E_*$. Our
direct LSI formulation replaces finiteness and nondegeneracy conditions on the
local minima by the functional inequality itself. Their initial-law condition
includes a smooth density, finite raw Fisher information, and moments of every
order. The analogous condition on our three-component state implies
Assumption~\ref{ass:initial}, since bounded $\nabla^2U$ and a second moment
turn finite raw Fisher information into finite relative Fisher information.
Assumption~\ref{ass:initial} thus allows less regular initial laws. Their
eventually exact schedule $\eps_t=E/\log t$ satisfies the asymptotic and
derivative bounds in \eqref{eq:coolingassumption}; we additionally require
global $C^1$ regularity of the schedule. For their discrete-time theorem,
He, Tan, and Wu \cite[Theorem~2.7]{HeTanWu2024} additionally assume that
$\nabla^2U$ is globally Lipschitz, which, under their smoothness assumption and
up to the choice of tensor norm, corresponds to
$\|\nabla^3U\|_\infty<\infty$ in Assumption~\ref{ass:discrete}. Their
requirement of moments of every order implies our finite-fourth-moment
condition. However, their smooth-density assumption excludes deterministic
initialization, whereas Assumption~\ref{ass:discrete} allows singular initial
laws.

Chak, Kantas, and Pavliotis
\cite[Assumptions~1--4]{ChakKantasPavliotis2023} assume a smooth potential with
bounded Hessian, dissipativity, and a quadratic gradient bound, together with
either a two-sided quadratic comparison or a nondegenerate Morse landscape with
additional saddle structure. These alternatives yield an LSI barrier
$\widehat{E}$, which corresponds to $D=\widehat{E}$ in
Assumption~\ref{ass:lsi}. The two formulations emphasize different structures:
their landscape assumptions specify Morse and saddle geometry, while ours uses
$C^4$ regularity together with a direct LSI. Their initial density is bounded and
smooth, whereas Assumption~\ref{ass:initial} accommodates nonsmooth and unbounded
initial densities. The cooling conditions likewise emphasize different controls.
They allow any $T_t\to0$ satisfying $T_t\geq E/\log t$ and
$|T_t'|\leq C/t$, while ours combines monotonicity, the asymptotic relation
$\eps_t\log(t+t_0)\to E$, and the sharper derivative control in
\eqref{eq:coolingassumption}. Their dynamics cools through
temperature-dependent friction with fixed noise, whereas \eqref{eq:model} keeps
the friction fixed and decreases the noise amplitude.
\end{remark}

\section{Continuous-Time Annealing}\label{sec:continuous}

\subsection{Dissipation and initial-density approximation}
For $h:=\dd\nu/\dd\pi_\eps$, let $w:=\nabla\log h$, and write
\begin{equation}\label{eq:functional}
 \cH_\eps(\nu):=\cI_M(\nu\mid\pi_\eps)
                  +\frac{A_0}{\eps}\KL(\nu\mid\pi_\eps),
 \qquad \cI_M:=\int_{\R^{3d}} w^\top M w\,\dd\nu,
\end{equation}
where $\KL(\nu\mid\pi_\eps)$ denotes the Kullback-Leibler (KL) divergence:
\begin{equation*}
\KL(\nu\mid\pi_\eps)
:=\int_{\mathbb{R}^{3d}}\log h\,\dd\nu
=\int_{\mathbb{R}^{3d}}h\log h\,\dd\pi_\eps,
\end{equation*}
and
\begin{align}\label{eq:matrix}
A_0:=\frac{2\lambda b+2(q_1^2+q_2^2)+1}{\gamma},
\qquad
M:=\begin{pmatrix}1&1&0\\1&2&b\\0&b&\varsigma\end{pmatrix}\otimes\Id_d,
\end{align}
with
\begin{align}
 b:=\frac{K+1+(K+2)^2}{\lambda},
 \quad \varsigma:=b^2+1,
 \quad q_1:=b-\lambda,
 \quad q_2:=\lambda\varsigma-2\lambda+\gamma b.
\end{align}

The coefficients in \eqref{eq:functional}--\eqref{eq:matrix} are chosen for the
frozen-temperature identities \eqref{eq:fisherentropyderivatives}. In particular,
\eqref{eq:entropyderivative} shows that KL dissipation acts only on $w_z$ and
carries a factor $\gamma\eps$. The weight $A_0/\eps$ removes this temperature
factor, while the off-diagonal entries of $M$ transfer dissipation through
$Z\to V\to X$. The choice $\varsigma=b^2+1$ ensures $M\succ0$, and $b$ leaves the
positive margin $2\lambda b-2K-(K+2)^2=2+(K+2)^2$ in the $(x,v)$ block for
every $\|Q\|_{\rm op}\leq K$. The quantities $q_1,q_2$ are the resulting
$(x,z)$ and $(v,z)$ mixed coefficients in the matrix of
\eqref{eq:certificate}; $A_0$ is chosen so that
$\gamma A_0-2\lambda b-2(q_1^2+q_2^2)=1$ after these terms are absorbed.
These are explicit sufficient choices for \eqref{eq:certificate}, rather than
optimized constants.

The next lemma provides the coercivity used in the frozen-temperature
dissipation identity \eqref{eq:frozenidentity} in the proof of
Theorem~\ref{thm:main}; it holds uniformly over all Hessians with
$\|\nabla^2U\|_{\rm op}\leq K$.

\begin{lemma}[Algebraic dissipation certificate]\label{lem:matrix}
The matrix $M$ in \eqref{eq:matrix} is positive definite. For every symmetric $Q$ with
$\|Q\|_{\rm op}\leq K$, define
\[
 J_Q:=\begin{pmatrix}0&-\Id_{d}&0\\Q&0&-\lambda\Id_{d}\\0&\lambda\Id_{d}&-\gamma\Id_{d}\end{pmatrix},
 \qquad P_z:=\operatorname{diag}(0,0,\Id_d).
\]
Then
\begin{equation}\label{eq:certificate}
 -J_QM-MJ_Q^\top+A_0\gamma P_z\succeq\frac12\Id_{3d}.
\end{equation}
\end{lemma}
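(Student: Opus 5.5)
The plan is a direct block computation: since every block of $J_Q$ and $M$ is a polynomial in $Q$ and $\Id_d$, all blocks commute with $Q$. I would therefore diagonalize $Q$ and reduce \eqref{eq:certificate} to a family of $3\times3$ scalar inequalities indexed by the eigenvalues $q\in[-K,K]$ of $Q$.

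\emph{Positive definiteness of $M$.} I will use Sylvester's criterion on the scalar factor of $M$. The leading minors are $1$, $2-1=1$, and
\[
\det = 1\cdot(2\varsigma-b^2)-1\cdot\varsigma=\varsigma-b^2=1 .
\]
All are positive, so $M\succ0$ by the choice $\varsigma=b^2+1$.

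\emph{Computing the certificate matrix.} Next I compute $J_QM$ row by row:
\[
J_QM=\begin{pmatrix}-1&-2&-b\\ Q& Q-\lambda b&-\lambda\varsigma\\ \lambda&2\lambda-\gamma b&\lambda b-\gamma\varsigma\end{pmatrix}.
\]
Since $Q$ and $M$ are symmetric, $MJ_Q^\top=(J_QM)^\top$. Writing $S:=-J_QM-MJ_Q^\top+A_0\gamma P_z$ and using the definitions of $q_1,q_2,A_0$, I expect
\[
S=\begin{pmatrix}2\Id_d&2\Id_d-Q&q_1\Id_d\\ 2\Id_d-Q&2\lambda b\Id_d-2Q&q_2\Id_d\\ q_1\Id_d&q_2\Id_d&(2\gamma\varsigma+2q_1^2+2q_2^2+1)\Id_d\end{pmatrix}.
\]
In the $(z,z)$ entry, the term $-2\lambda b$ coming from $-2(\lambda b-\gamma\varsigma)$ cancels against the $2\lambda b$ contained in $\gamma A_0$. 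This is exactly why $A_0$ is defined as it is.

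\emph{Absorbing the $z$-couplings.} For a vector $(x,v,z)$, I apply Young's inequality to the mixed terms:
\[
2q_1\,x\cdot z\ge-\tfrac12|x|^2-2q_1^2|z|^2,\qquad 2q_2\,v\cdot z\ge-\tfrac12|v|^2-2q_2^2|z|^2.
\]
The $z$-diagonal then retains $2\gamma\varsigma+1\ge\tfrac12$. Hence it suffices to show that the remaining $(x,v)$ quadratic form, after subtracting $\tfrac12$ on the diagonal, is nonnegative. That is, I need
\[
\begin{pmatrix}1&2-q\\2-q&2\lambda b-2q-1\end{pmatrix}\succeq0
\]
for every eigenvalue $q\in[-K,K]$ of $Q$.

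\emph{The $(x,v)$ block.} This is the only step where the specific value of $b$ matters, so I expect it to be the main point to check; the bookkeeping of signs in $J_QM$ is the other place errors could enter. Since the $(1,1)$ entry is positive, the condition is equivalent to
\[
2\lambda b-2q-1-(2-q)^2\ge0 .
\]
Using $2\lambda b=2K+2+2(K+2)^2$, $-2q\ge-2K$, and $(2-q)^2\le(K+2)^2$, the left side is at least $1+(K+2)^2>0$. This establishes \eqref{eq:certificate} uniformly over $\|Q\|_{\rm op}\le K$.
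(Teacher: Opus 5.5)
Your proposal is correct and follows the paper's proof: Sylvester's criterion on the scalar factor of $M$, the explicit computation of the certificate matrix (your entries match the paper's $B_Q$), Young's inequality to absorb the $q_1,q_2$ couplings into the $z$-diagonal, and the choice of $b$ to make the $(x,v)$ block positive. The only cosmetic difference is the $(x,v)$ block: you diagonalize $Q$ and apply a $2\times 2$ determinant test, whereas the paper bounds it directly using $\|2\Id_d-Q\|_{\rm op}\leq K+2$ and Young's inequality.
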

\begin{proof}
The scalar matrix defining $M$ has leading principal minors
$1$, $1$, and $\varsigma-b^2=1$, so that Sylvester's criterion gives $M\succ0$.
Denote the matrix on the left of \eqref{eq:certificate} by $B_Q$. Direct
matrix multiplication gives
\[
 B_Q=\begin{pmatrix}
 2\Id_{d}&2\Id_{d}-Q&q_1\Id_{d}\\
 2\Id_{d}-Q&2\lambda b\Id_{d}-2Q&q_2\Id_{d}\\
 q_1\Id_{d}&q_2\Id_d&(2\gamma\varsigma-2\lambda b+A_0\gamma)\Id_{d}
 \end{pmatrix}.
\]
For $u,v\in\R^d$, the quadratic form of its top-left block satisfies
\begin{align*}
 2|u|^2+2u^\top(2\Id_{d}-Q)v+v^\top(2\lambda b\Id_d-2Q)v
 &\geq 2|u|^2-2(K+2)|u||v|+(2\lambda b-2K)|v|^2\\
 &\geq |u|^2+\left(2\lambda b-2K-(K+2)^2\right)|v|^2\\
 &=|u|^2+\left(2+(K+2)^2\right)|v|^2.
\end{align*}
Moreover, for $z\in\R^d$,
\[
 2(q_1u+q_2v)\cdot z
 \geq-\frac12\left(|u|^2+|v|^2\right)
      -2\left(q_1^2+q_2^2\right)|z|^2.
\]
Since
\[
 2\gamma\varsigma-2\lambda b+A_0\gamma-2\left(q_1^2+q_2^2\right)=2\gamma\varsigma+1,
\]
we obtain, for every $\xi=(u,v,z)\in\R^{3d}$,
\[
 \xi^\top B_Q\xi
 \geq\frac12\left(|u|^2+|v|^2\right)+(2\gamma\varsigma+1)|z|^2
 \geq\frac12|\xi|^2.
\]
This proves \eqref{eq:certificate}.
\end{proof}

For nonsmooth initial densities, the next lemma justifies the integrated
entropy identities by approximation.

\begin{lemma}[Justification by initial-density approximation]\label{lem:closure}
Under Assumption~\ref{ass:potential}, let the annealed third-order process
$S_t=(X_t,V_t,Z_t)$ in \eqref{eq:model} be started, or restarted
at a deterministic time $s\geq0$, from
an absolutely continuous law $\nu_s$ with finite relative Fisher information with
respect to $\pi_{\eps_s}$ and finite second moment. Let $\nu_t$ denote the
law of $S_t$ for $t\geq s$, and let $F(t):=\cH_{\eps_t}(\nu_t)$. Then
$F(t)<\infty$ for every finite $t\geq s$.
Moreover, there exist smooth, strictly positive probability laws
$\nu_s^{(n)}$, Gaussian outside compact sets, such that
\[
 \mathcal W_2\left(\nu_s^{(n)},\nu_s\right)\to0,\qquad
 \cH_{\eps_s}\left(\nu_s^{(n)}\right)\to F(s),\qquad
 \sup_{n\geq1}\int_{\R^{3d}}|y|^2\nu_s^{(n)}(\dd y)<\infty.
\]
Let $\nu_u^{(n)}$ be the laws of \eqref{eq:model} started from $\nu_s^{(n)}$
at time $s$, and set $F_n(u):=\cH_{\eps_u}\left(\nu_u^{(n)}\right)$.
Fix $T>s$.
Suppose that there exist $a,b\in L^1([s,T])$, with $b\geq0$, such that
\begin{equation}\label{eq:abstractcomparison}
 F_n'(u)+a(u)F_n(u)\leq b(u)
 \quad\text{for a.e. }u\in[s,T]\text{ and every }n\geq1.
\end{equation}
Then $F$ satisfies
\begin{equation}\label{eq:abstractintegrated}
 F(t)\leq e^{-\int_s^t a(u)\dd u}F(s)
 +\int_s^t e^{-\int_v^t a(u)\dd u}b(v)\dd v,
 \qquad s\leq t\leq T.
\end{equation}
\end{lemma}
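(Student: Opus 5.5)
The plan has three parts: build the approximating initial laws, pass the differential inequality to each approximant in integrated form, and take the limit $n\to\infty$ using lower semicontinuity of $\cH$ along the flow.

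\textbf{Construction of the approximants.} Write $h_s=\dd\nu_s/\dd\pi_{\eps_s}$ and set $g=\sqrt{h_s}$. Finite relative Fisher information says $\nabla g\in L^2(\pi_{\eps_s})$, since $\cI(\nu_s\mid\pi_{\eps_s})=4\int|\nabla g|^2\dd\pi_{\eps_s}$.
\begin{itemize}
\item First truncate $g$ with a smooth cutoff $\chi_R$ and mollify.
\item Then add a small multiple $\delta_n$ of a fixed Gaussian density, normalised with respect to $\pi_{\eps_s}$, so that the result is strictly positive and Gaussian outside a compact set.
\item Renormalise to a probability law.
\end{itemize}
Standard convergence in $H^1(\pi_{\eps_s})$ of the approximating $\sqrt{h}$ gives convergence of $\cI(\cdot\mid\pi_{\eps_s})$. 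The matrix-weighted version $\cI_M$ converges too, because $M$ is a constant bounded matrix and $\cI_M$ is comparable to $\cI$. Convergence of $\KL$ follows from dominated convergence using the second moment and the quadratic growth of $H$, the latter coming from the bounded Hessian and dissipativity. Uniform second moments and $\mathcal W_2$ convergence are immediate from the truncation.

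\textbf{Finiteness of $F(t)$.} I would use lower semicontinuity of $\KL$ and of $\cI_M$ under weak convergence, which follows from their variational/convex representations. The frozen measure $\pi_{\eps_t}$ is fixed at each $t$, and $\nu^{(n)}_t\to\nu_t$ weakly by synchronous coupling: the drift is Lipschitz and the noise is additive, so $\mathcal W_2(\nu_t^{(n)},\nu_t)\le e^{C(t-s)}\mathcal W_2(\nu_s^{(n)},\nu_s)$.

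\textbf{Integrated inequality for the approximants.} For the approximants, \eqref{eq:abstractcomparison} integrates by Gr\"onwall to \eqref{eq:abstractintegrated} with $F_n$ in place of $F$. This needs $F_n$ to be absolutely continuous on $[s,T]$. For smooth, positive initial data with Gaussian tails, hypoellipticity gives smooth positive densities with enough decay and regularity (the $C^4$ potential and polynomial growth of the third and fourth derivatives are used here) that the derivative computations are legitimate. I would take absolute continuity of $F_n$ as part of the standing justification for the hypothesis.

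\textbf{Passage to the limit.} Taking $\liminf_n$ on the left and using $F(t)\le\liminf F_n(t)$ by lower semicontinuity, together with $F_n(s)\to F(s)$ on the right, yields \eqref{eq:abstractintegrated}. The right-hand side bounds are independent of $n$ except through $F_n(s)$. The same lower-semicontinuity argument, combined with \emph{any} finite bound of this type, shows $F(t)<\infty$.

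One caveat: an arbitrary $t$ with no hypothesis on $a,b$ gives finiteness only if the approximants satisfy some comparison inequality. I would supply one directly from the frozen identity with crude constants, namely $F_n'\le C(t)F_n+C(t)$, using the bounded Hessian and $|\eps'|\eps^{-1}$ times moment terms. The moment terms are controlled uniformly in $n$ by a Lyapunov bound on $H$.

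\textbf{Main obstacle.} The hardest step is the density approximation that yields convergence of $\cH_{\eps_s}$, since $\cI$ is only lower semicontinuous in general. The cure is to approximate at the level of $\sqrt h$ in the weighted Sobolev space $H^1(\pi_{\eps_s})$. There the truncation error $\int g^2|\nabla\chi_R|^2\dd\pi$ vanishes by dominated convergence, and the mollification error is handled by the local boundedness of $\nabla H$.
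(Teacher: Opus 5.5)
Your architecture matches the paper's Steps~2--3. You approximate the initial law, integrate \eqref{eq:abstractcomparison} for each approximant, and pass to the limit using $F_n(s)\to F(s)$, $\mathcal W_2$-stability of the flow under synchronous coupling, and lower semicontinuity of $\KL$ and $\cI_M$. Finiteness of $F(t)$ then comes from a crude bound $F_n'\le C_T(1+F_n)$ uniform in $n$.

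The gap is the step you set aside as a ``standing justification''. You need that, for smooth, positive, Gaussian-tailed initial data, $F_n$ is absolutely continuous and satisfies the frozen and temperature identities \eqref{eq:frozenidentity} and \eqref{eq:temperatureidentity} in integrated form. You use this twice. First, to integrate \eqref{eq:abstractcomparison} at all: an a.e.\ inequality for $F_n'$ cannot be integrated without absolute continuity. Second, to derive the crude inequality behind the unconditional claim $F(t)<\infty$. It is also exactly what the proof of Theorem~\ref{thm:main} relies on this lemma to supply.

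Invoking hypoellipticity does not provide it. H\"ormander's theorem gives only qualitative smoothness, and it does not even apply directly, since the drift is only $C^3$ when $U\in C^4$. What is needed is quantitative control at infinity:
\begin{itemize}
\item $\sup_{t\le T}\int(1+|y|^m)|\partial^\alpha\log\rho_t|^p\rho_t\,\dd y<\infty$ for $1\le|\alpha|\le3$;
\item two-sided bounds $c_Te^{-C_T(1+|y|^2)}\le\rho_t\le C_T$.
\end{itemize}
These let you differentiate under the integral and make the boundary terms from the integrations by parts vanish.

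The paper gets this from the additive noise. Subtracting the noise path turns the SDE into a pathwise ODE, so the random flow $\Phi_t$ is a $C^3$ diffeomorphism with $\det D\Phi_t=e^{-\gamma dt}$ and polynomially bounded derivatives; this is where the growth of $\nabla^3U$ and $\nabla^4U$ enters. Then $\rho_t(y)=e^{\gamma dt}\E[\rho_0(\Phi_t^{-1}(y))]$, and each $\partial^\alpha\rho_t/\rho_t$ is a conditional expectation of a polynomially bounded quantity. Finally, a cutoff-localized functional $\cH_{\eps,R}$, whose remainder terms vanish as $R\to\infty$, yields the integrated identities.

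A smaller point concerns convergence of the entropy at time $s$. Dominated convergence with second moments and the quadratic growth of $H$ controls $\int H\rho_n$ and the negative part of $\rho_n\log\rho_n$, but not the positive part. So it does not by itself give $\KL(\nu_s^{(n)}\mid\pi_{\eps_s})\to\KL(\nu_s\mid\pi_{\eps_s})$. The paper obtains $\rho_n\to\rho$ in $L^1\cap L^{1+\delta}$ from the Sobolev embedding of $\sqrt{\rho_n}\to\sqrt\rho$ in $H^1(\R^{3d})$, using $3d\ge3$. In your weighted setting you need a comparable uniform-integrability input, e.g.\ Jensen's inequality through the mollifier applied to the convex nondecreasing function $(u\log u)_+$, or a fixed-temperature LSI for $\pi_{\eps_s}$. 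Otherwise, approximating $\sqrt{\dd\nu_s/\dd\pi_{\eps_s}}$ in $H^1(\pi_{\eps_s})$ is a sound alternative to the paper's raw-density expansion \eqref{eq:relativefisherexpand}, and it makes convergence of $\cI_M$ immediate.
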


\begin{remark}[Use of the approximation lemma]
Lemma~\ref{lem:closure} is used in the proof of
Theorem~\ref{thm:main} to pass the comparison inequality
\eqref{eq:ode} from regularized initial laws to laws satisfying
Assumption~\ref{ass:initial}. Its time-shifted form is used in the proof
of Proposition~\ref{prop:restart} to extend the regular-law estimate
\eqref{eq:restartode} to general restart laws covered by that proposition.
\end{remark}

\begin{proof}[Proof of Lemma~\ref{lem:closure}]
Let $T>s$ be as in the statement. By the deterministic time shift $r=t-s$, with
$\widetilde\eps_r=\eps_{s+r}$,
$\widetilde B_r=B_{s+r}-B_s$,
$\widetilde a(r)=a(s+r)$, and $\widetilde b(r)=b(s+r)$ when the comparison
hypothesis is used, we may take $s=0$ and relabel $T-s$ as $T$ throughout
the proof. For each $y\in\R^{3d}$, let $\Phi_t(y)$ denote the solution of
\eqref{eq:model} at time $t$ started from $y$ at the shifted time $0$, using
the same Brownian path for every $y$. Let $\mathfrak b$ denote the drift
vector field, and set
\[
 N_T:=\sup_{0\leq t\leq T}\left|\int_0^t\sqrt{2\gamma\eps_r}\,\dd B_r\right|.
\]
Write $Y_t=\Phi_t(y)$. Subtracting the additive noise path gives a
pathwise nonautonomous ODE.
Indeed, with
\[
 G_t:=\left(0,0,\int_0^t\sqrt{2\gamma\eps_u}\,\dd B_u\right),
 \qquad \overline Y_t:=Y_t-G_t,
\]
the integral equation for \eqref{eq:model} becomes
\[
 Y_t=y+\int_0^t\mathfrak b(Y_u)\dd u+G_t,
 \qquad
 \dot{\overline Y}_t=\mathfrak b\left(\overline Y_t+G_t\right),
 \quad \overline Y_0=y.
\]
Thus, for every Brownian path, $\overline Y$ solves a nonautonomous ODE with
continuous forcing. Since $D\mathfrak b$ is bounded,
pathwise Gr\"{o}nwall's inequality gives
\begin{equation}\label{eq:pathwise-flow-growth}
 \sup_{0\leq t\leq T}|\Phi_t(y)|
 +\sup_{0\leq t\leq T}\left|\Phi_t^{-1}(y)\right|
 \leq C_T(1+|y|+N_T).
\end{equation}
The first-, second-, and third-order derivatives of the flow with respect to
its initial point,
$J_t=D_y\Phi_t(y)$,
$\mathcal K_t=D_y^2\Phi_t(y)$,
$L_t=D_y^3\Phi_t(y)$,
satisfy
\begin{align*}
 \dot J_t&=D\mathfrak b(Y_t)J_t,\qquad J_0=I,\\
 \dot{\mathcal K}_t[\eta_1,\eta_2]
 &=D\mathfrak b(Y_t)\mathcal K_t[\eta_1,\eta_2]
   +D^2\mathfrak b(Y_t)[J_t\eta_1,J_t\eta_2],\\
 \dot L_t[\eta_1,\eta_2,\eta_3]
 &=D\mathfrak b(Y_t)L_t[\eta_1,\eta_2,\eta_3]
   +D^3\mathfrak b(Y_t)[J_t\eta_1,J_t\eta_2,J_t\eta_3]+\sum_{\rm cyc}D^2\mathfrak b(Y_t)
       [J_t\eta_1,\mathcal K_t[\eta_2,\eta_3]],
\end{align*}
where $\sum_{\rm cyc}$ denotes the sum over the three cyclic permutations of
$(\eta_1,\eta_2,\eta_3)$; explicitly, 
\[
\begin{aligned}
 \sum_{\rm cyc}D^2\mathfrak b(Y_t)
       [J_t\eta_1,\mathcal K_t[\eta_2,\eta_3]]&=D^2\mathfrak b(Y_t)[J_t\eta_1,\mathcal K_t[\eta_2,\eta_3]]
 +D^2\mathfrak b(Y_t)[J_t\eta_2,\mathcal K_t[\eta_3,\eta_1]]\\
 &\qquad
 +D^2\mathfrak b(Y_t)[J_t\eta_3,\mathcal K_t[\eta_1,\eta_2]].
\end{aligned}
\]
with zero initial conditions for $\mathcal K_t$ and $L_t$. The polynomial bounds on
$\nabla^3U$ and $\nabla^4U$, followed successively by Gr\"{o}nwall's inequality,
give polynomial bounds for $\mathcal K_t$ and $L_t$.
More precisely, $\|J_t\|\leq e^{Ct}$ and, for finite exponents $p_2,p_3$,
\begin{align*}
 \|\mathcal K_t\|&\leq C\int_0^t\|\mathcal K_u\|\dd u
 +C_T\int_0^t(1+|Y_u|^{p_2})\|J_u\|^2\dd u,\\
 \|L_t\|&\leq C\int_0^t\|L_u\|\dd u
 +C_T\int_0^t(1+|Y_u|^{p_3})
 \left(\|J_u\|^3+\|J_u\|\|\mathcal K_u\|\right)\dd u.
\end{align*}
The pathwise estimate \eqref{eq:pathwise-flow-growth}
and two successive applications of
Gr\"{o}nwall's inequality therefore yield, for some finite $p_2',p_3'$,
\[
 \sup_{0\leq t\leq T}\|\mathcal K_t\|
 \leq C_T(1+|y|+N_T)^{p_2'},\qquad
 \sup_{0\leq t\leq T}\|L_t\|
 \leq C_T(1+|y|+N_T)^{p_3'}.
\]
Differentiating $\Phi_t^{-1}\circ\Phi_t=\Id$ gives the same bounds for the
inverse flow.
Indeed, if $\Psi_t=\Phi_t^{-1}$, then, with all derivatives of $\Phi_t$
evaluated at $\Psi_t(y)$,
\begin{equation}\label{eq:inverseflowderivatives}
\begin{aligned}
 D\Psi_t(y)&=\left[D\Phi_t(\Psi_t(y))\right]^{-1},\\
 D^2\Psi_t(y)[u,v]
 &=-D\Psi_t(y)D^2\Phi_t(\Psi_t(y))
       [D\Psi_t(y)u,D\Psi_t(y)v],\\
 D^3\Psi_t
 &=-D\Psi_t\left(
 D^3\Phi_t\left[D\Psi_t,D\Psi_t,D\Psi_t\right]
 +\sum_{\rm cyc}D^2\Phi_t\left[D^2\Psi_t,D\Psi_t\right]\right).
\end{aligned}
\end{equation}
The bound on $D\Psi_t$ follows equivalently by solving the first variational
equation backward, and \eqref{eq:inverseflowderivatives}
gives the stated polynomial
bounds for $D^2\Psi_t$ and $D^3\Psi_t$.
Finally, $\operatorname{div}\mathfrak b=-\gamma d$, so the pathwise
Jacobi--Liouville formula \cite{Kunita1990}
and
the change of variables under the random flow give
\begin{equation}\label{eq:flowdensity}
 \det D\Phi_t=e^{-\gamma dt},\qquad
 \rho_t(y)=e^{\gamma dt}\E\left[\rho_0\left(\Phi_t^{-1}(y)\right)\right].
\end{equation}
In particular, for every multi-index
$\alpha$ with $2\leq|\alpha|\leq3$,
and for any $0\leq t\leq T$,
\begin{equation}\label{eq:flowbounds}
\begin{aligned}
 &\left|\Phi_t^{\pm1}(y)\right|\leq C_T(1+|y|+N_T),
 \\
 &\left\|D\Phi_t^{\pm1}(y)\right\|\leq C_T,
 \\
 &\left|\partial^\alpha\Phi_t^{\pm1}(y)\right|
 \leq C_{T,\alpha}(1+|y|+N_T)^{q_\alpha},
\end{aligned}
\end{equation}
for some $q_\alpha<\infty$.

\smallskip
\noindent\emph{Step 1: regular initial densities.}
Assume first that $\rho_0$ is positive, smooth, and Gaussian outside a compact
set. Define
\[
 a_t(y,\omega):=e^{\gamma dt}\rho_0\left(\Phi_t^{-1}(y,\omega)\right),
 \qquad A_{\alpha,t}:=\frac{\partial^\alpha a_t}{a_t}.
\]
By \eqref{eq:flowbounds}, for suitable $m_\alpha<\infty$,
\begin{equation}\label{eq:Aalphabound}
 |A_{\alpha,t}(S_t,\omega)|
 \leq C_{T,\alpha}(1+|S_0|+N_T)^{m_\alpha},\qquad 0\leq t\leq T,
\end{equation}
and the right-hand side of \eqref{eq:Aalphabound}
has moments of every order because the regular
initial density has Gaussian tails. Differentiating
\eqref{eq:flowdensity} under the expectation and conditioning on $S_t=y$ give
\[
 \frac{\partial^\alpha\rho_t}{\rho_t}(y)
   =\E[A_{\alpha,t}(S_t,\omega)\mid S_t=y].
\]
Conditional Jensen's inequality and the polynomial identities relating
$\partial^\alpha\log\rho_t$ to
$(\partial^\beta\rho_t)/\rho_t$, $1\leq|\beta|\leq|\alpha|$, imply
\begin{equation}\label{eq:logdensitybounds}
 \sup_{0\leq t\leq T}\int_{\R^{3d}}(1+|y|^m)
 |\partial^\alpha\log\rho_t(y)|^p\rho_t(y)\dd y<\infty,
\end{equation}
for all finite $m,p$ and $1\leq|\alpha|\leq3$. Moreover,
\begin{equation}\label{eq:densitybounds}
 c_Te^{-C_T(1+|y|^2)}\leq\rho_t(y)
 \leq e^{\gamma dT}\|\rho_0\|_\infty,
 \qquad 0\leq t\leq T.
\end{equation}
Indeed, positivity and the Gaussian tails of the regular initial density give
$\rho_0(x)\geq c e^{-C(1+|x|^2)}$. Choose $R_T<\infty$ with
$p_T:=\mathbb P(N_T\leq R_T)>0$. On this event, \eqref{eq:flowbounds} yields
\[
 \rho_0\left(\Phi_t^{-1}(y)\right)
 \geq c_Te^{-C_T(1+|y|^2+R_T^2)}.
\]
Multiplication by $e^{\gamma dt}p_T$ and absorption of $R_T$ into the constants
prove the lower bound in \eqref{eq:densitybounds}.

Let $\chi_R\in C_c^\infty$ satisfy
\[
 \mathbf 1_{B_R}\leq\chi_R\leq\mathbf 1_{B_{2R}},
 \qquad \left|\partial^\alpha\chi_R\right|\leq C_\alpha R^{-|\alpha|}.
\]
The two bounds in \eqref{eq:densitybounds} imply
$|\log\rho_t(y)|\leq C_T(1+|y|^2)$. For
$w=\nabla\log(\rho/\pi_\eps)$, define the localized functional
\[
 \cH_{\eps,R}(\rho)
 :=\int_{\R^{3d}}\chi_Rw^\top Mw\,\rho\,\dd y
 +\frac{A_0}{\eps}\int_{\R^{3d}}
   \chi_R\log\frac{\rho}{\pi_\eps}\rho\,\dd y.
\]
For the frozen dynamics, differentiation and integration by parts give
\begin{align}\label{eq:localizedfrozen}
 \frac{\dd}{\dd t}\cH_{\eps,R}(\rho_t)
 =&-\int_{\R^{3d}}\chi_R
 w^\top\left(-J_{\nabla^2U}M-MJ_{\nabla^2U}^\top
             +A_0\gamma P_z\right)w\,\dd\nu_t\notag\\
 &\qquad-2\gamma\eps\sum_{i=1}^d\int_{\R^{3d}}\chi_R
  (\partial_{z_i}w)^\top M(\partial_{z_i}w)\,\dd\nu_t
  +\mathcal R_R^{\rm fr}.
\end{align}
Here $\mathcal R_R^{\rm fr}$ contains exactly the terms in which at least one
spatial derivative falls on $\chi_R$. For a fixed law $\nu=\rho\,\dd y$,
direct differentiation with respect to the temperature gives
\begin{align}\label{eq:localizedtemperature}
 \partial_\eps\cH_{\eps,R}(\rho)
 =&-\frac{2}{\eps^2}\int_{\R^{3d}}
       \chi_Rw^\top M\nabla H\,\dd\nu
 -\frac{A_0}{\eps^2}\int_{\R^{3d}}
       \chi_R\log\frac{\rho}{\pi_\eps}\,\dd\nu\notag\\
 &\qquad-\frac{A_0}{\eps^3}\int_{\R^{3d}}
       \chi_R\left(H-\pi_\eps(H)\right)\,\dd\nu.
\end{align}
The bounds \eqref{eq:logdensitybounds}--\eqref{eq:densitybounds} and dominated
convergence theorem show that the terms in \eqref{eq:localizedfrozen} and
\eqref{eq:localizedtemperature} not involving derivatives of $\chi_R$
converge, as $R\to\infty$, to the right-hand sides of
\eqref{eq:frozenidentity} and \eqref{eq:temperatureidentity}, respectively.
Each term in the remainder $\mathcal R_R^{\rm fr}$ in
\eqref{eq:localizedfrozen}
is a finite sum of terms
bounded by expressions of the form
\[
 C R^{-j}\int_{\{R\leq|y|\leq2R\}}
 (1+|y|^q)(1+|\log\rho_t|)
 \prod_{\ell=1}^m\left|\partial^{\beta_\ell}\log\rho_t\right|\,\rho_t\dd y,
\]
where $j\geq1$, $q<\infty$, $0\leq m\leq3$, and
$\sum_{\ell=1}^m|\beta_\ell|\leq3$, with the empty product interpreted as one.
H\"older's inequality and
\eqref{eq:logdensitybounds}, used with a larger polynomial weight and exponent,
make these tails converge to zero uniformly on $[0,T]$. They therefore vanish
in $L^1([0,T])$. The Fokker--Planck equation and the same integrable bounds
justify differentiation in time. Thus \eqref{eq:frozenidentity} and
\eqref{eq:temperatureidentity}, integrated on $[0,T]$, hold for this class of
initial densities.

\smallskip
\noindent\emph{Step 2: approximation of the initial density.}
Let $f=\sqrt{\rho_0}$. Since $\nabla H$ has at most linear growth, finite
relative Fisher information and finite second moment imply
\begin{equation}\label{eq:rawfisherfinite}
 \int_{\R^{3d}}|\nabla f|^2\dd y<\infty;
 \qquad f\in H^1(\R^{3d}).
\end{equation}
Indeed,
\[
\left|\nabla\log\rho_0\right|^2\leq
2\left|\nabla\log(\rho_0/\pi_{\eps_0})\right|^2+2\eps_0^{-2}\left|\nabla H\right|^2;
\]
multiplication by $\rho_0$ and the identity
$|\nabla\rho_0|^2/\rho_0=4|\nabla f|^2$ give \eqref{eq:rawfisherfinite}.
Choose cutoffs $\theta_n$ with $\theta_nf\to f$ in $H^1$ and
$\int_{\R^{3d}}\left(1+|y|^2\right)\left|\theta_n^2f^2-f^2\right|\dd y\to0$; 
the radii can be chosen
successively from the finite $H^1$ and second-moment tails. Convolution with a
nonnegative mollifier, addition of $a_ne^{-|y|^2}$ with $a_n\downarrow0$, and
$L^2$ normalization then produce positive smooth functions $f_n$, Gaussian
outside a compact set, such that
\begin{align}
 f_n\longrightarrow f \qquad\text{in }H^1(\R^{3d}),\label{eq:h1approx}
\end{align}
and
\begin{align}
 \int_{\R^{3d}}\left(1+|y|^2\right)\left|f_n^2-f^2\right|\dd y\longrightarrow0.
 \label{eq:weightedapprox}
\end{align}
Set $\rho_0^{(n)}=f_n^2$ and $\nu_0^{(n)}=\rho_0^{(n)}\dd y$.
Let $m=3d$ and $q=m/(m-2)>1$. Since $m\geq3$, Sobolev embedding and
\eqref{eq:h1approx} give $f_n\to f$ both in $L^2$ and in
$L^{2m/(m-2)}$. Hence,
\begin{align*}
 \left\|\rho_0^{(n)}-\rho_0\right\|_{L^1}
 &\leq \|f_n-f\|_{L^2}(\|f_n\|_{L^2}+\|f\|_{L^2}),\\
 \left\|\rho_0^{(n)}-\rho_0\right\|_{L^q}
 &\leq \|f_n-f\|_{L^{2q}}
          (\|f_n\|_{L^{2q}}+\|f\|_{L^{2q}}),
\end{align*}
and both right-hand sides tend to zero because $2q=2m/(m-2)$.
Thus $\rho_0^{(n)}\to\rho_0$ in $L^1\cap L^{1+\delta}$ with
$\delta=q-1=2/(m-2)$. If $G_+(u):=(u\log u)_+$, then
\[
 |G_+(u)-G_+(v)|
 \leq C_q|u-v|\left(1+u^{q-1}+v^{q-1}\right),\qquad u,v\geq0.
\]
The preceding $L^1$ and $L^q$ convergence and H\"older's inequality therefore
give $G_+\left(\rho_0^{(n)}\right)\to G_+(\rho_0)$ in $L^1$; in particular, the positive
entropy parts are uniformly integrable.

For $G_-(u):=(-u\log u)_+$, use
\begin{equation}\label{eq:negativeentropypointwise}
 u\log(1/u)\leq |y|^2u+e^{-|y|^2-1},
 \qquad 0<u\leq1,
\end{equation}
together with \eqref{eq:weightedapprox}. On every ball,
$0\leq G_-\left(\rho_0^{(n)}\right)\leq e^{-1}$ and convergence in measure follows from
the $L^1$ convergence above. Dominated convergence along almost-everywhere
convergent subsequences handles the integral on the ball. Outside a ball, the
weighted convergence gives
\[
 \lim_{R\to\infty}\sup_{n\geq1}
 \int_{\{|y|>R\}}|y|^2\rho_0^{(n)}(y)\dd y=0.
\]
The pointwise entropy bound \eqref{eq:negativeentropypointwise}
therefore gives a tail bound tending to zero
uniformly in $n$. Consequently, the negative entropy parts are uniformly
integrable and their integrals converge. Since
$|H(y)|\leq C(1+|y|^2)$, \eqref{eq:weightedapprox} also controls the energy
term. We have proved
\begin{equation}\label{eq:entropyapprox}
 \int_{\R^{3d}}\rho_0^{(n)}\log\rho_0^{(n)}\dd y
 \longrightarrow \int_{\R^{3d}}\rho_0\log\rho_0\dd y,
 \qquad
 \int_{\R^{3d}} H\left|\rho_0^{(n)}-\rho_0\right|\dd y\longrightarrow0.
\end{equation}

Define
\[
 \cI_M^{\rm raw}(\rho):=\int_{\R^{3d}}(\nabla\log\rho)^\top
 M(\nabla\log\rho)\rho\dd y.
\]
Equation \eqref{eq:h1approx} gives
\[
 \cI_M^{\rm raw}\left(\rho_0^{(n)}\right)
 =4\int_{\R^{3d}}(\nabla f_n)^\top M\nabla f_n\dd y
 \longrightarrow\cI_M^{\rm raw}(\rho_0).
\]
For any density $\rho$ under consideration,
\begin{align}\label{eq:relativefisherexpand}
 \cI_M(\rho\dd y\mid\pi_\eps)
 =\cI_M^{\rm raw}(\rho)
 +\frac{2}{\eps}\int_{\R^{3d}}(\nabla\rho)^\top M\nabla H\dd y
 +\frac{1}{\eps^2}\int_{\R^{3d}}(\nabla H)^\top M\nabla H\,\rho\dd y,
\end{align}
where cutoff integration by parts yields
\[
 \int_{\R^{3d}}(\nabla\rho)^\top M\nabla H\dd y
 =-\int_{\R^{3d}}\rho\,\operatorname{tr}\left(M\nabla^2H\right)\dd y.
\]
The Hessian of $H$ is bounded and $|\nabla H(y)|\leq C(1+|y|)$.
Consequently, \eqref{eq:weightedapprox}--\eqref{eq:relativefisherexpand}
imply
\begin{equation}\label{eq:functionalapprox}
 \cH_{\eps_0}\left(\nu_0^{(n)}\right)\longrightarrow
 \cH_{\eps_0}(\nu_0).
\end{equation}

\smallskip
\noindent\emph{Step 3: passage to the limit.}
Let $\nu_t^{(n)}$ and $\nu_t$ be the laws starting from
$\nu_0^{(n)}$ and $\nu_0$, respectively. By \eqref{eq:weightedapprox},
$\mathcal W_2\left(\nu_0^{(n)},\nu_0\right)\to0$; synchronous coupling and the Lipschitz drift
then give
\begin{equation}\label{eq:floww2}
 \sup_{0\leq t\leq T}\mathcal W_2\left(\nu_t^{(n)},\nu_t\right)
 \leq C_T\mathcal W_2\left(\nu_0^{(n)},\nu_0\right)\longrightarrow0.
\end{equation}
In particular, \eqref{eq:weightedapprox} gives
\[
 C_2:=\sup_{n\geq1}\int_{\R^{3d}}
 \left(1+|y|^2\right)\dd\nu_0^{(n)}(y)<\infty.
\]
The standard finite-time moment estimate for the globally Lipschitz drift,
uniformly bounded noise amplitude on $[0,T]$, and initial bound $C_2$ yields
\begin{equation}\label{eq:regularizedmoments}
 \sup_{n\geq1}\sup_{0\leq t\leq T}
 \int_{\R^{3d}}\left(1+|y|^2\right)\dd\nu_t^{(n)}(y)\leq C_{T,C_2}.
\end{equation}

For fixed $t$, relative entropy with respect to $\pi_{\eps_t}$ is lower
semicontinuous under the weak convergence implied by \eqref{eq:floww2}. The
same holds for the weighted relative Fisher information. Indeed, its
extended-valued variational representation is
\begin{equation}\label{eq:fisherdual}
 \cI_M(\nu\mid\pi_\eps)
 =\sup_{\phi\in C_c^\infty(\R^{3d};\R^{3d})}
 \int_{\R^{3d}}\left[-2\operatorname{div}(M\phi)
 +\frac{2}{\eps}\phi^\top M\nabla H-\phi^\top M\phi\right]\dd\nu.
\end{equation}
This follows by integrating the term containing $\nabla\log\rho$ by parts and
then completing the square in $\phi$.
Indeed, writing $w=\nabla\log(\dd\nu/\dd\pi_\eps)$, integration by parts gives
\begin{align*}
 &\int_{\R^{3d}}\left[-2\operatorname{div}(M\phi)
 +\frac{2}{\eps}\phi^\top M\nabla H-\phi^\top M\phi\right]\dd\nu\\
 &\qquad=\int_{\R^{3d}}
       \left(2\phi^\top Mw-\phi^\top M\phi\right)\dd\nu\\
 &\qquad=\cI_M(\nu\mid\pi_\eps)
       -\int_{\R^{3d}}(w-\phi)^\top M(w-\phi)\dd\nu
 \leq\cI_M(\nu\mid\pi_\eps).
\end{align*}
Truncating and smoothing $w$ shows that the supremum over
$C_c^\infty$ attains the right-hand side in the extended-valued sense. For
every fixed compactly supported $\phi$, the integrand in
\eqref{eq:fisherdual} is bounded and continuous, because $\nabla H$ is bounded
on the support of $\phi$. Hence, if $\nu_n\Rightarrow\nu$,
\[
 \int_{\R^{3d}} G_\phi\dd\nu=\lim_{n\to\infty}\int_{\R^{3d}} G_\phi\dd\nu_n
 \leq\liminf_{n\to\infty}\cI_M(\nu_n\mid\pi_\eps),
\]
where $G_\phi$ denotes the integrand in \eqref{eq:fisherdual}. Taking the
supremum over $\phi$ proves lower semicontinuity.

By hypothesis, \eqref{eq:abstractcomparison} holds with the same $a,b$ for
every $n$. The integrating-factor formula consequently gives
\[
 \cH_{\eps_t}\left(\nu_t^{(n)}\right)
 \leq e^{-\int_0^t a(u)\dd u}\cH_{\eps_0}\left(\nu_0^{(n)}\right)
 +\int_0^t e^{-\int_v^t a(u)\dd u}b(v)\dd v.
\]
Equation \eqref{eq:functionalapprox} gives convergence of the first term.
Equations \eqref{eq:floww2} and \eqref{eq:fisherdual}, together with lower
semicontinuity of relative entropy, give
\[
 \cH_{\eps_t}(\nu_t)\leq
 \liminf_{n\to\infty}\cH_{\eps_t}\left(\nu_t^{(n)}\right).
\]
Taking the lower limit proves \eqref{eq:abstractintegrated} after undoing the
time shift. On a fixed finite interval, \eqref{eq:regularizedmoments} provides
a moment bound uniform in $n$. Combining \eqref{eq:frozenidentity},
\eqref{eq:temperatureidentity}, and \eqref{eq:coolingexplicit}, and dropping
the nonpositive frozen part, therefore gives
$F_n'\leq C_T(1+F_n)$ uniformly in $n$.
Applying the integrating-factor argument and lower semicontinuity once more
with $a=-C_T$ and $b=C_T$ proves $F(t)<\infty$.
\end{proof}

\subsection{Annealing estimate}

With the coercive matrix and the approximation step in place, we are now ready to state the
continuous-time annealing result in the following theorem.

\begin{theorem}[Continuous-time annealing]\label{thm:main}
Under Assumptions~\ref{ass:potential}, \ref{ass:initial}, and \ref{ass:lsi}, for every
$\delta>0$ and $\alpha>0$ there is $C_{\delta,\alpha}<\infty$ such that, for
all sufficiently large $t$,
\begin{equation}\label{eq:mainrate}
 \mathbb P\left(U(X_t)>\delta\right)
 \leq C_{\delta,\alpha}(t+t_0)^{-\min\{\delta/E,\,(1-D/E)/2\}+\alpha}.
\end{equation}
Here $E$ is the cooling limit in \eqref{eq:coolingassumption}, and $D$ is the
LSI barrier in \eqref{eq:lsi}.
\end{theorem}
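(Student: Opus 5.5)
The proof follows the He--Tan--Wu strategy: derive a differential inequality for $F(t):=\cH_{\eps_t}(\nu_t)$, close it with the LSI, and convert decay of $\KL(\nu_t\mid\pi_{\eps_t})$ into a probability bound.

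\emph{Step 1: frozen-temperature dissipation.} For regularized initial laws $\nu_0^{(n)}$ as in Lemma~\ref{lem:closure}, I write
\[
 \frac{\dd}{\dd t}\cH_{\eps_t}(\nu_t)
 =\bigl(\partial_t\cH_\eps(\nu_t)\bigr)\big|_{\eps=\eps_t}
 +\eps_t'\,\partial_\eps\cH_\eps(\nu_t)\big|_{\eps=\eps_t}.
\]
The first (frozen) term is computed by the standard $\Gamma$-calculus identity for $w=\nabla\log h$. It produces $-\int w^\top(-J_{\nabla^2U}M-MJ_{\nabla^2U}^\top+A_0\gamma P_z)w\,\dd\nu_t$ minus a nonnegative second-order term. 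Lemma~\ref{lem:matrix} bounds it by $-\frac12\cI(\nu_t\mid\pi_{\eps_t})$. Using $M\preceq\|M\|\Id$ and the tensorized LSI \eqref{eq:tensorizedlsi}, $\cI\geq\KL/C_{\rm LS}$, the frozen part is then at most $-c\,\min\{1,\eps/(A_0C_{\rm LS}(\eps))\}\,\cH_\eps$. With Assumption~\ref{ass:lsi} this rate is $\gtrsim e^{-(D+\eta)/\eps_t}\gtrsim (t+t_0)^{-(D+\eta')/E}$ by the cooling assumption.

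\emph{Step 2: temperature variation.} The term $\eps_t'\partial_\eps\cH$ is handled using the formula of type \eqref{eq:localizedtemperature} without cutoff. The cross term $\eps^{-2}\int w^\top M\nabla H\,\dd\nu$ is bounded by Cauchy--Schwarz, giving $\eta\,\cI_M+\eta^{-1}\eps^{-4}\int|\nabla H|^2\dd\nu$; the $\eta\,\cI_M$ part is absorbed into $\cH$. The remaining entropy and energy terms are $\eps^{-2}\KL\leq \eps^{-1}\cH/A_0$ and $\eps^{-3}\int H\,\dd\nu$. Here I need a uniform-in-time second-moment bound $\sup_t\E|S_t|^2<\infty$, obtained from a Lyapunov function $H+\kappa(x\cdot v+v\cdot z)$-type combination, dissipativity, and bounded noise. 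Then $|\eps_t'|(1+\eps_t^{-3})\leq C\log(t+t_0)/(t+t_0)$ gives
\[
 F'\leq -c(t+t_0)^{-\vartheta-\eta'}F+C\frac{\log^{k}(t+t_0)}{t+t_0}(1+F).
\]
Since $\vartheta<1$, the decay wins, and the comparison ODE yields $F(t)\lesssim (t+t_0)^{-(1-\vartheta)+\eta''}$. Lemma~\ref{lem:closure} transfers this bound to the actual $\nu_0$, and it also gives $F(s)<\infty$ at the starting time.

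\emph{Step 3: from entropy to probability.} From Step~2, $\KL(\nu_t\mid\pi_{\eps_t})\leq\eps_tF/A_0\lesssim(t+t_0)^{-(1-\vartheta)+\eta}$. Pinsker's inequality then gives a total-variation distance of order $(t+t_0)^{-(1-\vartheta)/2+\eta}$. The Gibbs tail satisfies $\mu_{\eps}(U>\delta)\lesssim e^{-(\delta-\eta)/\eps}\lesssim(t+t_0)^{-\delta/E+\alpha}$ by a Laplace-type estimate that uses dissipativity for integrability and $\min U=0$ with $U$ smooth so the sublevel set near $0$ has mass $\gtrsim\eps^{d/2}$. Adding the two bounds gives \eqref{eq:mainrate}.

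\textbf{Main obstacle.} I expect the hardest part to be Step~2. It requires bounding the $\eps^{-4}$ cross term and the $\eps^{-3}$ energy term with only polylogarithmic losses, using a uniform moment bound that must hold for the hypoelliptic three-block system under cooling. I would first try to build the Lyapunov function with the small mixing coefficients $\kappa$ chosen from $r$, $\lambda$, $\gamma$.
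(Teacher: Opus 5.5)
Your proposal is correct and follows essentially the same route as the paper's proof: frozen dissipation from Lemma~\ref{lem:matrix} and the tensorized LSI, the temperature-derivative identity controlled by a uniform Lyapunov second-moment bound and the cooling condition, a comparison argument giving $F(t)\lesssim(t+t_0)^{-(1-\vartheta)+\eta}$, then Pinsker plus a Gibbs tail bound, with Lemma~\ref{lem:closure} handling the regularization. Your small deviations only cost logarithmic factors that are absorbed into $\alpha$: you use Young's inequality with a parameter instead of $\sqrt F\leq 1+F$ for the cross term, and you use the lower bound $Z_\eps\gtrsim\eps^{d/2}$ instead of a positive-measure sublevel set in the Gibbs tail.
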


\begin{remark}[Interpretation of the continuous-time rate]\label{rem:noacceleration}
With $D$ identified with the critical depth $E_*$, the exponent in
\eqref{eq:mainrate} is the same as the kinetic annealing exponent of He, Tan,
and Wu \cite[Theorem~2.5]{HeTanWu2024}; the extra hypoelliptic level therefore
matches the kinetic polynomial exponent. For fixed $\lambda$ and $\gamma$, the
definitions in \eqref{eq:matrix} give
$b=\mathcal{O}(K^2)$, $\varsigma=\mathcal{O}(K^4)$, and $A_0=\mathcal{O}(K^8)$ as $K\to\infty$.
Thus, Theorem~\ref{thm:main} gives the same continuous-time annealing exponent
as underdamped Langevin dynamics. He, Tan, and Wu
\cite[Remark~2.6]{HeTanWu2024} identify the same barrier-controlled exponent
for overdamped annealing. The quantitative distinction established here is at
the discretization level, while fixed-temperature Eyring--Kramers theory
captures the complementary prefactor improvement.
\end{remark}

\begin{proof}[Proof of Theorem~\ref{thm:main}]
We separate moment control, frozen dissipation, and the cooling contribution.

\emph{Moment control.}
For $a>0$ small, let $b_a=2a/\lambda$, $c_a=a\lambda/\gamma$, and
\[
 V_a=1+H+a x\cdot v+b_a v\cdot z+c_a x\cdot z
                       +\frac12c_a\lambda|x|^2+C_a.
\]
Choose the constant $C_a$ so that $V_a\geq1$. Dissipativity and bounded Hessian
give quadratic lower and upper bounds on $U$.
To make them explicit, set
$R_0:=\max\{1,\sqrt{2m/r}\}$. If $x=\rho\theta$ with $|\theta|=1$ and
$\rho\geq R_0$, then
\[
 \frac{\dd}{\dd\rho}U(\rho\theta)
 =\theta\cdot\nabla U(\rho\theta)
 \geq r\rho-\frac m\rho\geq\frac r2\rho.
\]
After integration from $R_0$ to $\rho$ and adjustment on the ball $B_{R_0}$,
this gives $U(x)\geq r|x|^2/4-C_-$. On the other hand, Taylor's formula and
$\|\nabla^2U\|_{\rm op}\leq K$ give
\[
 U(x)\leq U(0)+|\nabla U(0)||x|+\frac K2|x|^2
 \leq C_+(1+|x|^2).
\]
Thus,
\begin{equation}\label{eq:quadraticU}
 \frac r4|x|^2-C_-\leq U(x)\leq C_+(1+|x|^2).
\end{equation}
Consequently, for small $a$,
$V_a$ is comparable to $1+|x|^2+|v|^2+|z|^2$.
Direct calculation, with cancellation of the $x\cdot v$ and $x\cdot z$ terms,
gives
\begin{align*}
 L_\eps V_a={}&-a x\cdot\nabla U-a|v|^2+(2a-\gamma)|z|^2
 +(c_a-\gamma b_a)v\cdot z-b_a\nabla U\cdot z+\gamma\eps d.
\end{align*}
For clarity, let $c_0=|\nabla U(0)|$ and
$c_1=|\lambda/\gamma-2\gamma/\lambda|$. Young's inequality gives
\begin{align}\label{eq:lyapunovyoung}
 a c_1|v||z|&\leq\frac a4|v|^2+a c_1^2|z|^2,\\
 \frac{2a}{\lambda}(K|x|+c_0)|z|
 &\leq\frac{ar}{4}|x|^2
       +a\left(\frac{4K^2}{r\lambda^2}+1\right)|z|^2
       +\frac{a c_0^2}{\lambda^2}.
\end{align}
In addition to the condition ensuring quadratic comparability of $V_a$, take
$a(3+c_1^2+4K^2/(r\lambda^2))\leq\gamma/2$.
Using $x\cdot\nabla U(x)\geq r|x|^2-m$ together with
\eqref{eq:lyapunovyoung} gives
\[
 L_{\eps_t}V_a
 \leq-\frac{3ar}{4}|x|^2-\frac{3a}{4}|v|^2-\frac{\gamma}{2}|z|^2
       +am+\frac{ac_0^2}{\lambda^2}+\gamma d\sup_{u\geq0}\eps_u
 \leq-cV_a+C.
\]
Dynkin's formula and Gr\"{o}nwall's inequality now yield
\begin{equation}\label{eq:moments}
 \sup_{t\geq0}\E\left[|X_t|^2+|V_t|^2+|Z_t|^2\right]<\infty.
\end{equation}
Moreover $|\nabla_z V_a|^2\leq C V_a$, and hence
\begin{align*}
 L_{\eps_t}V_a^p
 =pV_a^{p-1}L_{\eps_t}V_a
   +\gamma\eps_t p(p-1)V_a^{p-2}|\nabla_zV_a|^2\leq-c_pV_a^p+C_p,
 \qquad p\in\mathbb N.
\end{align*}
In particular, $p=1$ proves the required second-moment bound. Higher moments
are propagated whenever the corresponding initial moment is finite. The
second-moment estimate is uniform for restarted processes with uniformly
bounded initial second moments.
For the Gibbs position marginal, integration by parts gives
$\pi_\eps(x\cdot\nabla U)=d\eps$. Dissipativity therefore bounds its
second moment uniformly for $0<\eps\leq\eps_0$. The quadratic upper bound
on $U$ and the two Gaussian factors then bound $\pi_\eps(H)$ uniformly.
Indeed,
\[
 0=\int_{\R^d}\nabla\cdot\left(xe^{-U(x)/\eps}\right)\dd x
 =dZ_\eps-\eps^{-1}\int_{\R^d}
 x\cdot\nabla U(x)e^{-U(x)/\eps}\dd x,
\]
and hence
\[
 r\mu_\eps(|x|^2)-m\leq d\eps,
 \qquad
 \mu_\eps(|x|^2)\leq\frac{d\eps+m}{r}.
\]
Together with \eqref{eq:quadraticU} and
$\pi_\eps(|v|^2)=\pi_\eps(|z|^2)=d\eps$, this yields
\begin{equation}\label{eq:gibbsmoments}
 \sup_{0<\eps\leq\eps_0}
 \left\{\pi_\eps\left(|x|^2+|v|^2+|z|^2\right)+\pi_\eps(H)\right\}<\infty.
\end{equation}

\emph{Frozen dissipation.}
Recall from \eqref{eq:generator} that
\[
 L_\eps=v\cdot\nabla_x+(-\nabla U+\lambda z)\cdot\nabla_v
 +(-\lambda v-\gamma z)\cdot\nabla_z+\gamma\eps\Delta_z.
\]
Its adjoint in $L^2(\pi_\eps)$
has drift
$b^\dagger=(-v,\nabla U-\lambda z,\lambda v-\gamma z)$, with Jacobian
$J_{\nabla^2U}$. Relative-density differentiation is governed by this adjoint
drift. Let $\nu_t^\eps$ denote the
law evolving with temperature fixed at $\eps$, and let
$h:=\dd\nu_t^\eps/\dd\pi_\eps$, $g:=\log h$, $w:=\nabla g$, and
$\mathcal S:=\gamma\eps\Delta_z-\gamma z\cdot\nabla_z$. Then
\[
 \partial_t g=L_\eps^\dagger g+\gamma\eps|\nabla_z g|^2,
 \qquad L_\eps+L_\eps^\dagger=2\mathcal S.
\]
Differentiate $\int_{\R^{3d}} h w^\top M w\dd\pi_\eps$, move $L_\eps^\dagger$
off $h$, and use
$\nabla(L_\eps^\dagger g)=L_\eps^\dagger w+J_{\nabla^2U}^\top w$.
Integration by parts with $\mathcal S$ cancels the term containing
$\nabla|\nabla_z g|^2$, leaving
\begin{subequations}\label{eq:fisherentropyderivatives}
\begin{align}
 \frac{\dd}{\dd t}\cI_M(\nu_t^\eps\mid\pi_\eps)
 &=\int_{\R^{3d}} w^\top\left(J_{\nabla^2U}M+MJ_{\nabla^2U}^\top\right)w\dd\nu_t^\eps
   -2\gamma\eps\sum_{i=1}^{d}\int_{\R^{3d}}(\partial_{z_i}w)^\top M(\partial_{z_i}w)\dd\nu_t^\eps,
   \label{eq:fisherderivative}\\
 \frac{\dd}{\dd t}\KL(\nu_t^\eps\mid\pi_\eps)
 &=-\gamma\eps\int_{\R^{3d}}|w_z|^2\dd\nu_t^\eps.
   \label{eq:entropyderivative}
\end{align}
\end{subequations}
These operations are justified first for the regular initial laws in
Lemma~\ref{lem:closure}. Multiplying the entropy identity
\eqref{eq:entropyderivative}
by $A_0/\eps$ and adding it to the weighted Fisher-information identity
\eqref{eq:fisherderivative}
gives
\begin{align}\label{eq:frozenidentity}
 \frac{\dd}{\dd t}\cH_\eps(\nu_t^\eps)
 =&-\int_{\R^{3d}} w^\top\left(-J_{\nabla^2U}M-MJ_{\nabla^2U}^\top
                         +A_0\gamma P_z\right)w\,\dd\nu_t^\eps\notag\\
 &\qquad\qquad-2\gamma\eps\sum_{i=1}^d\int_{\R^{3d}}
           (\partial_{z_i}w)^\top M(\partial_{z_i}w)\,\dd\nu_t^\eps.
\end{align}
Lemma~\ref{lem:matrix} implies
\[
 \frac{\dd}{\dd t}\cH_\eps(\nu_t^\eps)
 \leq-\frac12\int_{\R^{3d}}|w|^2\dd\nu_t^\eps.
\]
On the other hand, \eqref{eq:lsi} and tensorization as in
\eqref{eq:tensorizedlsi} give
\[
 \cH_\eps(\nu_t^\eps)
 \leq\left(\|M\|+\frac{A_0C_{\rm LS}(\eps)}{\eps}\right)
       \int_{\R^{3d}}|w|^2\dd\nu_t^\eps.
\]
Consequently,
\begin{equation}\label{eq:decay}
 \frac{\dd}{\dd t}\cH_\eps(\nu_t^\eps)
 \leq-\kappa(\eps)\cH_\eps(\nu_t^\eps),\qquad
 \kappa(\eps):=\frac{1}{2(\|M\|+A_0C_{\rm LS}(\eps)/\eps)}.
\end{equation}
To record the loss in this estimate explicitly, fix $\eta>0$ and first choose
$\eta_{\rm LSI}>0$ sufficiently small such that
$(D+\eta_{\rm LSI})/E<D/E+\eta/3$. Equations
\eqref{eq:lsi} and \eqref{eq:decay} give, for small $\eps$,
\begin{align}\label{exp:as:lower:bound}
 \kappa(\eps)\geq c\eps e^{-(D+\eta_{\rm LSI})/\eps}.
\end{align}
Since $\eps_t\log(t+t_0)\to E$, for all sufficiently large $t$,
\[
 \frac{D+\eta_{\rm LSI}}{\eps_t}
 \leq\left(\frac DE+\frac{2\eta}{3}\right)\log(t+t_0).
\]
Consequently, the exponential in \eqref{exp:as:lower:bound} satisfies
\[
 e^{-(D+\eta_{\rm LSI})/\eps_t}
 \geq(t+t_0)^{-D/E-2\eta/3}.
\]
Finally,
$\eps_t\asymp1/\log(t+t_0)\geq c_\eta(t+t_0)^{-\eta/3}$, and hence
\begin{equation}\label{eq:kappalower}
 \kappa(\eps_t)\geq c_\eta(t+t_0)^{-D/E-\eta}.
\end{equation}

\emph{Cooling contribution.}
For a fixed law $\nu$ with density $\rho$, write
$w:=\nabla\log(\rho/\pi_\eps)$. Then
$\partial_\eps\log(\rho/\pi_\eps)=-(H-\pi_\eps(H))/\eps^2$ and
$\partial_\eps w=-\nabla H/\eps^2$. Therefore,
\begin{align}\label{eq:temperatureidentity}
 \partial_\eps\cH_\eps(\nu)
 ={}&-\frac{2}{\eps^2}\int_{\R^{3d}} w^\top M\nabla H\,\dd\nu
 -\frac{A_0}{\eps^2}\KL(\nu\mid\pi_\eps)
 -\frac{A_0}{\eps^3}\left(\nu(H)-\pi_\eps(H)\right).
\end{align}
For the actual annealing law $\nu_t$, apply \eqref{eq:frozenidentity}
instantaneously at $\eps=\eps_t$ and add $\eps_t'$ times
\eqref{eq:temperatureidentity}. Set $F(t)=\cH_{\eps_t}(\nu_t)$. The moment
bounds \eqref{eq:moments} for $\nu_t$ and \eqref{eq:gibbsmoments} for
$\pi_{\eps_t}$, together with $|\nabla H(y)|\leq C(1+|y|)$ and the
Cauchy--Schwarz inequality, give
\[
 \left|\int_{\R^{3d}} w^\top M\nabla H\dd\nu_t\right|\leq C\sqrt{F(t)},
 \quad \KL(\nu_t\mid\pi_{\eps_t})\leq\eps_t F(t)/A_0,
 \quad |\nu_t(H)-\pi_{\eps_t}(H)|\leq C.
\]
Since $\sqrt F\leq1+F$, the second condition in
\eqref{eq:coolingassumption} bounds the cooling term by
\begin{equation}\label{eq:coolingexplicit}
 C|\eps_t'|\left(1+\eps_t^{-3}\right)(1+F(t))
 \leq C\frac{\log(t+t_0)}{t+t_0}(1+F(t)),
\end{equation}
for all sufficiently large $t$. The constant depends on the initial law
only through its second-moment bound. On every fixed finite interval the
first coefficient in \eqref{eq:coolingexplicit} is bounded. Dropping frozen
dissipation and applying Gr\"{o}nwall's inequality therefore also bounds $F$ before the
low-temperature LSI becomes available, uniformly over the approximations
in Lemma~\ref{lem:closure}.
More explicitly, for each $T<\infty$ there is $C_T<\infty$, depending only
on the common second-moment bound and the model parameters, such that, with
$F_n(t):=\cH_{\eps_t}\left(\nu_t^{(n)}\right)$,
\[
 F_n'(t)\leq C_T(1+F_n(t)),\qquad 0\leq t\leq T.
\]
Thus, for $0\leq s\leq t\leq T$,
\[
 F_n(t)
 \leq e^{C_T(t-s)}\left(F_n(s)+C_T(t-s)\right).
\]
The same $C_T$ applies to every $n$.

For any $0<\eta<1-D/E$, the ratio of
$\log(t+t_0)/(t+t_0)$ to $(t+t_0)^{-D/E-\eta}$ tends to zero.
Absorb the coefficient of $F$ in half of the frozen decay and use
$\log(t+t_0)\leq C_\eta(t+t_0)^\eta$. This gives
\begin{equation}\label{eq:ode}
 F'(t)\leq-c_\eta(t+t_0)^{-D/E-\eta}F(t)
              +C_\eta(t+t_0)^{-1+\eta},
 \qquad F(t)=\cH_{\eps_t}(\nu_t).
\end{equation}
For any $r_0<1-D/E$, choose $\eta$ so that
$r_0<1-D/E-2\eta$. A sufficiently large multiple of
$(t+t_0)^{-r_0}$ is a supersolution of \eqref{eq:ode}. Indeed, if
$\overline F(t)=C(t+t_0)^{-r_0}$, then
\begin{equation}\label{eq:supersolutioncalculation}
 \overline F'(t)+c_\eta(t+t_0)^{-D/E-\eta}\overline F(t)
 ={C}\left[-r_0(t+t_0)^{-r_0-1}
 +c_\eta(t+t_0)^{-r_0-D/E-\eta}\right].
\end{equation}
Because $r_0+D/E+\eta<1-\eta$ and $D/E+\eta<1$, the right-hand side is at
least $C_\eta(t+t_0)^{-1+\eta}$ for all large $t$ after increasing $C$.
Choose $t_1$ sufficiently large and increase $C$ once more so that
$F(t_1)\leq\overline F(t_1)$. For $G=F-\overline F$, subtracting the
supersolution inequality obtained from
\eqref{eq:supersolutioncalculation} from \eqref{eq:ode}
gives
\[
 G'(t)+c_\eta(t+t_0)^{-D/E-\eta}G(t)\leq0,
 \qquad t\geq t_1.
\]
Multiplication by the corresponding integrating factor yields
\[
 G(t)\leq G(t_1)
 \exp\left(-\int_{t_1}^tc_\eta(u+t_0)^{-D/E-\eta}\dd u\right)\leq0.
\]
Therefore
\[
F(t)\leq C_{r_0}(t+t_0)^{-r_0}.
\]

Finally, $\KL(\nu_t\mid\pi_{\eps_t})\leq\eps_tF(t)/A_0$.
Pinsker's inequality gives
\[
 \mathbb P(U(X_t)>\delta)
 \leq\pi_{\eps_t}(U>\delta)
                   +\sqrt{\frac12\KL(\nu_t\mid\pi_{\eps_t})}.
\]
The Gibbs tail estimate used here is
\begin{equation}\label{eq:gibbstail}
 \pi_\eps(U>\delta)\leq C_{\delta,\eta}e^{-(\delta-\eta)/\eps},
 \qquad 0<\eta<\delta.
\end{equation}
To verify \eqref{eq:gibbstail}, choose a measurable set $A_\eta$ of positive
Lebesgue measure on which $U\leq\eta/2$. For $0<\eta<\delta$,
\begin{align}\label{integral:bounds}
 \int_{\{U>\delta\}}e^{-U/\eps}\dd x
 \leq e^{-(\delta-\eta/2)/\eps}
       \int_{\R^d}e^{-\eta U/(2\delta\eps)}\dd x,
\end{align}
and
\begin{align*}
 Z_\eps\geq |A_\eta|e^{-\eta/(2\eps)}.
\end{align*}
The last integral in \eqref{integral:bounds} is uniformly bounded for all sufficiently
small $\eps$ by quadratic confinement and $U\geq0$. Dividing the two bounds
gives \eqref{eq:gibbstail}.
The first condition in \eqref{eq:coolingassumption} converts this exponential
bound into $(t+t_0)^{-(\delta-\eta)/E+o(1)}$. Passing to the limiting initial
law by Lemma~\ref{lem:closure} and taking $r_0$ arbitrarily close to $1-D/E$
proves \eqref{eq:mainrate}.
\end{proof}

The discrete analysis will restart the exact process from the law of a numerical
iterate. The following proposition isolates the continuous estimate with constants
uniform over such restarting laws.

\begin{proposition}[Uniform comparison after a restart]\label{prop:restart}
Fix a bound $M_2$ on the initial second moment and
$0<\eta<1-D/E$. There are $t_*,c_\eta,C_\eta>0$, depending on $M_2$
and the model parameters and uniform in the restarting time and initial
Fisher information, with the following property. Start the exact annealed
process at $s\geq t_*$ from any law satisfying
$\int_{\R^{3d}}|y|^2\nu_s(\dd y)\leq M_2$ and
$F(s)=\cH_{\eps_s}(\nu_s)<\infty$. Then, for
$t\geq s$,
\begin{equation}\label{eq:restartintegral}
 F(t)\leq e^{-\int_s^t a_\eta(u)\dd u}F(s)
 +C_\eta\int_s^t e^{-\int_v^t a_\eta(u)\dd u}
                         (v+t_0)^{-1+\eta}\dd v,
 \quad a_\eta(u):=c_\eta(u+t_0)^{-D/E-\eta}.
\end{equation}
\end{proposition}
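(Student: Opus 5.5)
The plan is to repeat the differential-inequality part of the proof of Theorem~\ref{thm:main} on $[s,\infty)$, tracking which constants depend on the restarted law, and then to pass to general restart laws with Lemma~\ref{lem:closure}.

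\emph{Step 1: regular restart laws.} First take $\nu_s$ smooth, strictly positive and Gaussian outside a compact set, as produced by Lemma~\ref{lem:closure}. For such laws the frozen identity \eqref{eq:frozenidentity} and the temperature identity \eqref{eq:temperatureidentity} hold along the annealed flow started at time $s$. Lemma~\ref{lem:matrix} and the LSI bound then give the frozen decay $-\kappa(\eps_u)F(u)$, with $\kappa(\eps_u)\geq c_\eta(u+t_0)^{-D/E-\eta}$ by \eqref{eq:kappalower}. This bound depends only on $K,\lambda,\gamma$, the LSI constants and the schedule, and not on the law.

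The cooling term is bounded as in \eqref{eq:coolingexplicit}. For this we need $\sup_{u\geq s}\E|S_u|^2\leq C(M_2)$. We get it from the Lyapunov function $V_a$: the inequality $L_{\eps_u}V_a\leq -cV_a+C$ and Dynkin's formula from time $s$ give $\E V_a(S_u)\leq e^{-c(u-s)}\E V_a(S_s)+C/c$. Since $V_a$ is comparable to $1+|y|^2$, the resulting bound depends only on $M_2$ and is uniform in $s$. Together with \eqref{eq:gibbsmoments}, the three estimates preceding \eqref{eq:coolingexplicit} hold with constants depending only on $M_2$.

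Next choose $t_*$, depending only on $\eta$, the schedule and $M_2$, so that three things hold for $u\geq t_*$: $\eps_u\leq\eps_{\eta_{\rm LSI}}$; the derivative bound in \eqref{eq:coolingassumption} is in force; and $C(M_2)\log(u+t_0)/(u+t_0)\leq\tfrac12 c_\eta(u+t_0)^{-D/E-\eta}$. The last condition is possible because $D/E+\eta<1$. For $u\geq s\geq t_*$ we then obtain \eqref{eq:restartode}, that is,
\[
F_n'(u)\leq -a_\eta(u)F_n(u)+C_\eta(u+t_0)^{-1+\eta},
\]
after halving $c_\eta$ and using $\log(u+t_0)\leq C_\eta(u+t_0)^\eta$.

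\emph{Step 2: general restart laws.} If $F(s)<\infty$, then $\KL(\nu_s\mid\pi_{\eps_s})<\infty$ and $\cI_M(\nu_s\mid\pi_{\eps_s})<\infty$. Since $M\succ0$, $\nu_s$ is absolutely continuous with finite relative Fisher information, so Lemma~\ref{lem:closure} applies at time $s$. It supplies regular approximations $\nu_s^{(n)}$ with $\cH_{\eps_s}(\nu_s^{(n)})\to F(s)$ and uniformly bounded second moments, say by $M_2+1$ for large $n$. Running Step~1 with $M_2+1$ in place of $M_2$ gives \eqref{eq:abstractcomparison} on every $[s,T]$ with $a=a_\eta$ and $b=C_\eta(\cdot+t_0)^{-1+\eta}$, with the same constants for all $n$. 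The lemma's conclusion \eqref{eq:abstractintegrated} is exactly \eqref{eq:restartintegral} on $[s,T]$, and since $T$ is arbitrary it holds for all $t\geq s$. Nothing in this argument involves the Fisher information of $\nu_s$ beyond its finiteness, which gives the claimed uniformity in the initial Fisher information.

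\emph{Main obstacle.} The delicate point is uniformity in $s$, specifically the moment bound of Step~1. The second-moment constant must not grow with the restart time, and the approximating laws must satisfy the same bound. The Lyapunov decay handles this. Absorbing the cooling term into the frozen decay then requires only $u\geq t_*$, and $t_*$ does not depend on $s$.
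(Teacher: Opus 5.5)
Your proposal is correct and follows the paper's proof essentially step by step. A restarted moment bound that is uniform in $s$ makes the constant in \eqref{eq:coolingexplicit} depend only on $M_2$, which gives \eqref{eq:restartode} for regular laws with $s\geq t_*$. The regular approximations of Lemma~\ref{lem:closure} with second moments at most $M_2+1$, together with $F_n(s)\to F(s)$ and lower semicontinuity, then extend the integrated bound to general restart laws. Your explicit check that $F(s)<\infty$ and $M\succ0$ give finite relative Fisher information, so that Lemma~\ref{lem:closure} applies, is a detail the paper leaves implicit.
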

\begin{proof}
The restarted version of \eqref{eq:moments} gives a constant
$\overline M_2:=C(1+M_2)$ such that
\[
 \sup_{u\geq s}\int_{\R^{3d}}|y|^2\nu_u(\dd y)\leq\overline M_2.
\]
Hence the constant in \eqref{eq:coolingexplicit} depends only on $M_2$ and
the model parameters. We may therefore choose $t_*$, $c_\eta$, and $C_\eta$
uniformly over the laws in the proposition so that, for every regular initial
density and almost every $u\geq s\geq t_*$,
\begin{equation}\label{eq:restartode}
 F'(u)+a_\eta(u)F(u)
 \leq C_\eta(u+t_0)^{-1+\eta}.
\end{equation}
Set $A_s(t):=\int_s^t a_\eta(u)\dd u$. Multiplication of
\eqref{eq:restartode} by $e^{A_s(u)}$ yields
\[
 \frac{\dd}{\dd u}\left(e^{A_s(u)}F(u)\right)
 \leq C_\eta e^{A_s(u)}(u+t_0)^{-1+\eta}.
\]
Integration over $[s,t]$ and division by $e^{A_s(t)}$ give
\[
 F(t)\leq e^{-A_s(t)}F(s)
 +C_\eta\int_s^t e^{-[A_s(t)-A_s(v)]}
                   (v+t_0)^{-1+\eta}\dd v,
\]
which is \eqref{eq:restartintegral}.

For a general initial law, let $\nu_s^{(n)}$ be the regular approximations
from Lemma~\ref{lem:closure}, chosen so that their second moments are bounded
by $M_2+1$. If
$F_n(u)=\cH_{\eps_u}\left(\nu_u^{(n)}\right)$, then
$F_n(s)\to F(s)$ by Lemma~\ref{lem:closure}, while
\eqref{eq:floww2} and \eqref{eq:fisherdual} give
$F(t)\leq\liminf_{n\to\infty}F_n(t)$. Applying to $F_n$ the
integrating-factor estimate obtained from \eqref{eq:restartode}, namely
\eqref{eq:restartintegral} for regular initial laws,
and passing to the lower limit proves \eqref{eq:restartintegral}. The
constants remain independent of $F(s)$.
\end{proof}

\begin{remark}[Temperature-dependent degeneracy]\label{rem:degeneracy}
The diffusion matrix has rank $d$ in a $3d$-dimensional space even at positive
temperature, and cooling sends its nonzero eigenvalues to zero. The weight
$A_0/\eps$ compensates the factor $\eps$ in entropy dissipation, while
\eqref{eq:decay} and Proposition~\ref{prop:restart} retain the remaining
temperature dependence needed in the discrete comparison.
\end{remark}

\section{Discrete-Time Annealing}\label{sec:discrete}
\subsection{Discretization schemes and convergence result}

We first define the annealed scheme $(X_{k},V_{k},Z_{k})_{k\geq 0}$ 
based on the numerical scheme of Mou et al. \cite{MouEtAl2021}. 
Write $\tau_0=0$,
$\tau_{k+1}=\tau_k+h_k$, and $s_k=\tau_k+t_0$.
Given $S_k=(X_{k},V_{k},Z_{k})=(x,v,z)$, define $g_u:=\nabla U(x+uv)$ and
$\bar g:=h_k^{-1}\int_0^{h_k}g_u\dd u$. On $0\leq u\leq h_k$ construct
\begin{equation}\label{eq:stages}
\begin{aligned}
 \dd\widehat Z_u&=\left(-\lambda v-\gamma\widehat Z_u\right)\dd u
                         +\sqrt{2\gamma\eps_{\tau_k}}\dd B_u,\\
 \dd\widetilde V_u&=\left(-g_u+\lambda\widehat Z_u\right)\dd u,
 &\dd\widehat V_u&=\left(-\bar g+\lambda\widehat Z_u\right)\dd u,\\
 \dd\widetilde X_u&=\widehat V_u\dd u,
 &\dd\widetilde Z_u&=\left(-\lambda\widehat V_u-\gamma\widetilde Z_u\right)\dd u
                         +\sqrt{2\gamma\eps_{\tau_k}}\dd B_u.
\end{aligned}
\end{equation}
All variables start from the corresponding component of $(x,v,z)$. The two
$Z$ equations use the same Brownian motion. Then
$S_{k+1}=\left(\widetilde X_{h_k},\widetilde V_{h_k},\widetilde Z_{h_k}\right)$ in distribution, 
where $S_{k+1}=\left(X_{k+1},V_{k+1},Z_{k+1}\right)$ is 
generated through the Gaussian transition
\begin{equation}\label{eq:moustep}
 S_{k+1}\mid S_k=s\sim
 \mathcal N\left(m_{h_k}(s),\eps_{\tau_k}\Sigma_{h_k}\right),
\end{equation}
whose mean and covariance we now give explicitly. For a generic step length
$h>0$, set
\begin{align*}
 a_0(h)&:=\frac{1-e^{-\gamma h}}{\gamma},&
 a_1(h)&:=\frac{h}{\gamma}
          -\frac{1-e^{-\gamma h}}{\gamma^2},\\
 a_2(h)&:=\frac{h^2}{2\gamma}-\frac{h}{\gamma^2}
          +\frac{1-e^{-\gamma h}}{\gamma^3},&
 b_0(h)&:=\frac{1-e^{-\gamma h}-\gamma h e^{-\gamma h}}{\gamma^2},\\
 b_1(h)&:=\frac{h(1+e^{-\gamma h})}{\gamma^2}
          -\frac{2(1-e^{-\gamma h})}{\gamma^3}.
\end{align*}
For $s=(x,v,z)$ and
$\Delta U=\int_0^h\nabla U(x+uv)\dd u$, write
$m_h:=\left(m_h^x,m_h^v,m_h^z\right)$. Direct integration of
\eqref{eq:stages} yields
\begin{align}\label{eq:explicitmean}
 m_h^x(s)
 &:=x-\frac h2\Delta U+\left(h-\lambda^2a_2(h)\right)v
      +\lambda a_1(h)z,\notag\\
 m_h^v(s)
 &:=-\Delta U+\left(1-\lambda^2a_1(h)\right)v
      +\lambda a_0(h)z,\notag\\
 m_h^z(s)
 &:=\left(\frac{\lambda}{\gamma}
   -\frac{\lambda(1-e^{-\gamma h})}{\gamma^2h}\right)\Delta U
   +\left(-\lambda a_0(h)+\lambda^3b_1(h)\right)v
      +\left(e^{-\gamma h}-\lambda^2b_0(h)\right)z.
\end{align}
To describe the covariance, define the column vector
\begin{equation}\label{eq:noisekernels}
\mathbf k(r):=
\begin{pmatrix}
 \displaystyle\frac{\lambda}{\gamma}
 \left(r-\frac{1-e^{-\gamma r}}{\gamma}\right)\\[5pt]
 \displaystyle\frac{\lambda}{\gamma}(1-e^{-\gamma r})\\[5pt]
 \displaystyle e^{-\gamma r}
 -\frac{\lambda^2}{\gamma^2}
  \left(1-e^{-\gamma r}-\gamma r e^{-\gamma r}\right)
\end{pmatrix}.
\end{equation}
Equivalently, $\mathbf k(r)=\mathbf u_0+r\mathbf u_1
+e^{-\gamma r}(\mathbf v_0+r\mathbf v_1)$, where
\begin{align*}
 \mathbf u_0:=\left(-\frac{\lambda}{\gamma^2},
                       \frac{\lambda}{\gamma},
                      -\frac{\lambda^2}{\gamma^2}\right)^\top,
 \quad\mathbf u_1:=\left(\frac{\lambda}{\gamma},0,0\right)^\top,
 \quad\mathbf v_0:=\left(\frac{\lambda}{\gamma^2},
                      -\frac{\lambda}{\gamma},
                       1+\frac{\lambda^2}{\gamma^2}\right)^\top,
 \quad\mathbf v_1:=\left(0,0,\frac{\lambda^2}{\gamma}\right)^\top.
\end{align*}
Then the covariance in \eqref{eq:moustep} can be evaluated without numerical
quadrature as follows: 
\begin{equation}\label{eq:explicitcovariance}
 \Sigma_h
 :=2\gamma\int_0^h\mathbf k(r)\mathbf k(r)^\top\dd r
   \otimes\Id_d
 =2\gamma K_h\otimes\Id_d,
\end{equation}
where
\begin{align*}
 K_h:=&h\mathbf u_0\mathbf u_0^\top
 +\frac{h^2}{2}\left(\mathbf u_0\mathbf u_1^\top
                      +\mathbf u_1\mathbf u_0^\top\right)
 +\frac{h^3}{3}\mathbf u_1\mathbf u_1^\top\\
 &+J_0(\gamma,h)\left(\mathbf u_0\mathbf v_0^\top
                      +\mathbf v_0\mathbf u_0^\top\right)+J_1(\gamma,h)\left(\mathbf u_0\mathbf v_1^\top
                      +\mathbf v_1\mathbf u_0^\top
                      +\mathbf u_1\mathbf v_0^\top
                      +\mathbf v_0\mathbf u_1^\top\right)\\
 &\qquad+J_2(\gamma,h)\left(\mathbf u_1\mathbf v_1^\top
                      +\mathbf v_1\mathbf u_1^\top\right)
 +J_0(2\gamma,h)\mathbf v_0\mathbf v_0^\top\\
 &\qquad\qquad+J_1(2\gamma,h)\left(\mathbf v_0\mathbf v_1^\top
                       +\mathbf v_1\mathbf v_0^\top\right)
 +J_2(2\gamma,h)\mathbf v_1\mathbf v_1^\top,
\end{align*}
with
\[
 J_n(q,h):=\frac{n!}{q^{n+1}}
 \left(1-e^{-qh}\sum_{j=0}^n\frac{(qh)^j}{j!}\right),
\]
for $n=0,1,2$ and $q>0$.
The force enters the endpoint only through $\Delta U$. Besides this exact-integral
version, we consider the implementable midpoint version obtained by replacing
$\Delta U$ in that same endpoint mean by
\begin{equation}\label{eq:midpoint}
 \Delta U_{\rm mid}:=h_k\nabla U\left(x+\frac12h_kv\right).
\end{equation}
The covariance is unchanged. This selects the exact-integral and midpoint
members of the integration family of Mou et al. \cite{MouEtAl2021} analyzed 
in the next theorem below.

\begin{theorem}[Discrete annealing for two schemes of Mou et al.]\label{thm:discrete}
Assume Assumptions~\ref{ass:potential}, \ref{ass:lsi}, and \ref{ass:discrete}.
For either \eqref{eq:stages} or its midpoint endpoint version, take
\[
 h_k=c_hs_k^{-a},\qquad c_h>0,\quad a>0,
\]
with $c_h$ small enough that every step is in a fixed sufficiently small
interval $(0,h_*]$. Let $\vartheta=D/E$, where $D$ and $E$ are defined in
\eqref{eq:lsi} and \eqref{eq:coolingassumption}, respectively.
If $a>\vartheta/2$, then for every
$\delta,\alpha>0$ and all large $k$,
\begin{equation}\label{eq:discreterate}
 \mathbb P\left(U(X_k)>\delta\right)
 \leq C_{\delta,\alpha}s_k^{-r(\delta,a)+\alpha},\qquad
 r(\delta,a):=\min\left\{\frac\delta E,
                   \frac{1-\vartheta}{2},2a-\vartheta\right\}.
\end{equation}
In particular, $a\geq(1+\vartheta)/4$ retains the continuous-time exponent.
Moreover $s_k\asymp k^{1/(1+a)}$, so the corresponding iteration exponent is
$r(\delta,a)/(1+a)$, again up to an arbitrarily small loss.
\end{theorem}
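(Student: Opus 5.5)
We compare the numerical chain with the exact annealed process on a recent window and use Proposition~\ref{prop:restart} to propagate entropy decay, following the ``recent-window'' strategy mentioned in the contributions. Fix a large index $k$. Choose an earlier index $j<k$ such that the physical window $[\tau_j,\tau_k]$ has length $T_k\asymp s_k^{\vartheta+\eta}\log s_k$. This is long enough that $\int_{\tau_j}^{\tau_k}a_\eta(u)\dd u\geq 2\log s_k$, so the initial term in \eqref{eq:restartintegral} is negligible. Restart the exact process at $\tau_j$ from a law $\widetilde\nu_{\tau_j}$ close to the law of $S_j$. Since $S_j$ may be singular (Assumption~\ref{ass:discrete}), we first apply one exact-transition smoothing step: the Gaussian kernel \eqref{eq:moustep}, or an exact frozen transition of small duration, has nondegenerate covariance $\eps\Sigma_h$ because $K_h\succ0$ by hypoellipticity. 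It therefore produces a law with finite relative Fisher information, with polynomial dependence on $h^{-1}$ and $\eps^{-1}$. This loss is only polynomial in $s_k$, so it is killed by the factor $e^{-\int a_\eta}$ on the window. Uniform fourth moments of the scheme, obtained from the discrete analogue of the Lyapunov function $V_a$ in the proof of Theorem~\ref{thm:main} for $h\leq h_*$, give the bound $M_2$ needed in Proposition~\ref{prop:restart}. Thus the exact restarted law $\widetilde\nu_{\tau_k}$ satisfies $\cH_{\eps_{\tau_k}}(\widetilde\nu_{\tau_k})\lesssim s_k^{-(1-\vartheta)+2\eta}$. The Pinsker and Gibbs-tail argument from the end of the proof of Theorem~\ref{thm:main} then gives the first two exponents in $r(\delta,a)$.

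It remains to bound the distance between the law of $S_k$ and $\widetilde\nu_{\tau_k}$ in a form usable in $\mathbb P(U(X_k)>\delta)$. We use total variation or KL via a local-coupling/Girsanov-type comparison across the window. The key local estimate compares one step of \eqref{eq:stages} (or its midpoint version) with the exact dynamics \eqref{eq:model} over $[\tau_i,\tau_{i+1}]$ under synchronous noise. In \eqref{eq:stages} the force error $g_u$ versus $\nabla U(X_u)$ enters only $V$. The discrepancy in $V$ is $\mathcal O(h^2)$ in the exact-force case, since $x+uv$ tracks $X_u$ to second order. The midpoint rule adds an $\mathcal O(h^3)$ quadrature error, using $\|\nabla^3U\|_\infty<\infty$. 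This discrepancy reaches the position endpoint at order $h^3$. The frozen temperature $\eps_{\tau_k}$ versus $\eps_u$ contributes a noise mismatch of size $|\eps'|h^{3/2}$, which is negligible by \eqref{eq:coolingassumption}.

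The mismatch must be converted into a KL bound for the one-step Gaussian kernels. The mean discrepancy is measured in the metric $(\eps\Sigma_h)^{-1}$, whose $x$-block scales like $\eps^{-1}h^{-5}$, the $v$-block like $\eps^{-1}h^{-3}$, and the $z$-block like $\eps^{-1}h^{-1}$. The combined local KL contribution is $\mathcal O(\eps^{-1}h^{3})$ per step, with polynomial moment factors. I expect this to be the main obstacle: the naive $x$-error $h^3$ against $h^{-5/2}$ gives $h^{1/2}$ only, so I would first check which direction the discrepancy actually lies in. The fact that the exact force is integrated along $x+uv$, and the midpoint error is symmetric, should make the $x$-component of the mismatch lie in the span of the noise kernel $\mathbf k$ up to higher order, which yields the $h^3$ rate per step after weighting. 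Summing over the window with the chain rule for KL and the data-processing inequality for the exact propagation gives
\[
 \KL\lesssim \sum_{i=j}^{k-1}\eps_{\tau_i}^{-1}h_i^{3}\cdot\mathrm{poly}
 \lesssim T_k\,s_k^{-2a}\,\mathrm{polylog}\lesssim s_k^{\vartheta-2a+\eta}.
\]
Pinsker's inequality then produces the third exponent $2a-\vartheta$, after halving, once the $\sqrt{\KL}$ is squared in the bookkeeping.

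Assembling the pieces, the bound reads
\[
 \mathbb P(U(X_k)>\delta)\leq \pi_{\eps_{\tau_k}}(U>\delta)+\mathrm{TV}(\widetilde\nu_{\tau_k},\pi_{\eps_{\tau_k}})+\mathrm{TV}(\mathrm{Law}(S_k),\widetilde\nu_{\tau_k}).
\]
The condition $a>\vartheta/2$ makes the discretization term decay. Taking $\eta\to0$ gives \eqref{eq:discreterate}, and $a\geq(1+\vartheta)/4$ makes $2a-\vartheta\geq(1-\vartheta)/2$. The relation $s_k\asymp k^{1/(1+a)}$ follows from comparing $\tau_k=\sum_j c_h s_j^{-a}$ with the ODE $\dot s=c_hs^{-a}$.
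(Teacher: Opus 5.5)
\textbf{There is a genuine gap in the discretization comparison.} The restart half of your plan is sound and matches the paper. At a positive grid point the numerical law is already a Gaussian mixture with covariance $\eps\Sigma_h\succeq c\eps h^5\Id_{3d}$, so its distorted entropy is polynomial in $s_k$ (Lemma~\ref{lem:gaussian}). The uniform moments come from the discrete Lyapunov recursion. Proposition~\ref{prop:restart} on a window slightly longer than $s_k^{\vartheta}$, followed by Pinsker and the Gibbs tail, then gives the first two exponents.

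The gap is in comparing $\mathcal L(S_k)$ with the restarted exact law, for two reasons.

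First, measuring the one-step mismatch in the metric $(\eps\Sigma_h)^{-1}$ cannot work. Expanding $-\frac h2\Delta U$ against $-\int_0^h(h-r)\nabla U(X_r)\,\dd r$ gives, for both schemes, $\delta x=-\frac{h^3}{12}\nabla^2U(x)v+\mathcal O(h^4)$, a generic vector. With $D_h=\operatorname{diag}(h^{5/2},h^{3/2},h^{1/2})\otimes\Id_d$, the rescaled covariance $D_h^{-1}\Sigma_hD_h^{-1}$ converges to a positive definite Gram matrix. Hence $\delta^\top(\eps\Sigma_h)^{-1}\delta\geq c\eps^{-1}h^{-5}|\delta x|^2\asymp\eps^{-1}h|\nabla^2U(x)v|^2$, whatever the direction of $\delta$. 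Your hoped-for alignment with $\mathbf k$ therefore cannot reduce it. The per-step KL is of order $\eps^{-1}h$, and its sum over a window of length $L$ is of order $L\log s_k$, which does not even tend to zero. A path-space Girsanov comparison is not available either, because the scheme perturbs the $x$ and $v$ drifts while the noise acts only on $z$.

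Second, even if you had a per-step KL of order $\eps^{-1}h^3$, the chain rule plus Pinsker would give $\mathrm{TV}\lesssim(\eps^{-1}Lh^2)^{1/2}\approx s_k^{-(2a-\vartheta)/2}$. That is the exponent $a-\vartheta/2$, not $2a-\vartheta$. ``Squaring the $\sqrt{\KL}$ in the bookkeeping'' is not a valid step. With exponent $a-\vartheta/2$ the rate-preserving threshold becomes $a\geq1/2$, which is exactly the kinetic condition the theorem is meant to improve on.

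\textbf{The missing idea} is to compare against bounded test functions and to pay for hypoelliptic smoothing at unit time rather than over one step. Telescope
\[
\nu_kf-\nu_mP_{\tau_m,T}f=\sum_{j=m}^{k-1}\nu_j\bigl(Q_{\tau_j,h_j}-P_{\tau_j,\tau_{j+1}}\bigr)P_{\tau_{j+1},T}f,
\]
where $f$ is a smooth approximation of $\mathbf 1_{\{U>\delta\}}$. Bound each term by the weighted Lipschitz constant of $P_{\tau_{j+1},T}f$ times the synchronous $L^2$ coupling error $\mathcal O(h_j^3)$ from Lemma~\ref{lem:local}. The Lipschitz constant, $C\eps_{t+1}^{-1/2}\asymp\sqrt{\log T}$, comes from a unit-time controlled coupling as in Lemma~\ref{lem:smoothing}: a degree-five polynomial shift moves the initial discrepancy into an extra $z$-drift, and Girsanov plus Pinsker then apply in the noise direction. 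The local errors now accumulate linearly, $\sqrt{\log T}\sum_j h_j^3\asymp\sqrt{\log T}\,T^{\beta-2a}$ with $\beta\downarrow\vartheta$. This yields the exponent $2a-\vartheta$ and the threshold $a_*=(1+\vartheta)/4$.
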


\begin{proof}
    We provide the proof in Section~\ref{sec:proof_of_theorem_discrete}.
\end{proof}

The next corollary chooses the coarsest polynomial grid that preserves the
continuous-time rate and records the resulting iteration exponent.

\begin{corollary}[Rate-preserving iteration exponent]
\label{cor:iterationgain}
Under the assumptions of Theorem~\ref{thm:discrete}, let
\[
 r_{\rm c}(\delta):=\min\left\{\frac\delta E,
                    \frac{1-\vartheta}{2}\right\},
 \qquad a_*:=\frac{1+\vartheta}{4}.
\]
For either scheme in that theorem, the choice $a=a_*$ gives, for every
$\delta,\alpha>0$ and all sufficiently large $k$,
\[
 \mathbb P\left(U(X_k)>\delta\right)
 \leq C_{\delta,\alpha}
 k^{-4r_{\rm c}(\delta)/(5+\vartheta)+\alpha}.
\]
\end{corollary}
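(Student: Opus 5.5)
The corollary is a direct specialization of Theorem~\ref{thm:discrete}, so the plan is to invoke that theorem at $a=a_*$ and then convert physical time into iteration count.

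\emph{Step 1: admissibility of $a_*$.} Because $\vartheta<1$, we have $a_*=(1+\vartheta)/4>\vartheta/2$; this is equivalent to $1+\vartheta>2\vartheta$. So $a=a_*$ satisfies the hypothesis $a>\vartheta/2$ of Theorem~\ref{thm:discrete}.

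\emph{Step 2: the physical-time exponent.} At $a=a_*$ we get $2a_*-\vartheta=(1+\vartheta)/2-\vartheta=(1-\vartheta)/2$. Hence
\[
 r(\delta,a_*)=\min\left\{\frac\delta E,\frac{1-\vartheta}{2},\frac{1-\vartheta}{2}\right\}=r_{\rm c}(\delta).
\]
Theorem~\ref{thm:discrete} then gives $\mathbb P(U(X_k)>\delta)\leq C_{\delta,\alpha'}s_k^{-r_{\rm c}(\delta)+\alpha'}$ for every $\alpha'>0$ and all large $k$.

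\emph{Step 3: relating $s_k$ to $k$.} This is the only step needing care, and it is still routine. From $s_{j+1}-s_j=h_j=c_hs_j^{-a}$ and the boundedness of $h_j\leq h_*$, we get $s_{j+1}/s_j\to1$. This gives the two-sided comparison
\[
 s_{j+1}^{1+a}-s_j^{1+a}\asymp s_j^{a}h_j=c_h .
\]
More precisely, apply the mean value theorem to $t\mapsto t^{1+a}$ on $[s_j,s_{j+1}]$ and use $s_{j+1}\leq s_j+h_*$ with $s_j\geq t_0>1$. The increments of $s_j^{1+a}$ are then bounded above and below by positive constants times $c_h$. Summing over $j<k$ yields $s_k^{1+a}\asymp k$, that is, $s_k\asymp k^{1/(1+a_*)}=k^{4/(5+\vartheta)}$. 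Only the lower bound $s_k\geq ck^{4/(5+\vartheta)}$ is actually needed, since the exponent $-r_{\rm c}+\alpha'$ is negative once $\alpha'<r_{\rm c}(\delta)$.

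\emph{Step 4: conclusion.} Substituting, we obtain
\[
 \mathbb P(U(X_k)>\delta)\leq C\,k^{-4(r_{\rm c}(\delta)-\alpha')/(5+\vartheta)} .
\]
Choose $\alpha'=(5+\vartheta)\alpha/4$ (reduced if necessary so that $\alpha'<r_{\rm c}(\delta)$; for larger $\alpha$ the claim only weakens). This turns the loss into $k^{\alpha}$ and gives the stated bound. The case $r_{\rm c}-\alpha'\le 0$ is trivial, since the probability is at most $1$.
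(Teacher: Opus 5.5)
Your proposal is correct and follows the paper's own argument. The paper likewise notes that $2a_*-\vartheta=(1-\vartheta)/2$, so that $r(\delta,a_*)=r_{\rm c}(\delta)$, then substitutes $s_k\asymp k^{1/(1+a_*)}=k^{4/(5+\vartheta)}$ and relabels the small loss. Your explicit check that $a_*>\vartheta/2$, your mean-value-theorem derivation of $s_k^{1+a}\asymp k$, and your observation that only the lower bound on $s_k$ is needed just spell out details the paper leaves implicit. The paper gets these from the theorem statement and the grid-asymptotics remark in Section~\ref{sec:discrete}.
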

\begin{proof}
At $a=a_*$, the discretization exponent satisfies
$2a-\vartheta=(1-\vartheta)/2$, so \eqref{eq:discreterate} has exponent
$r_{\rm c}(\delta)$. Since
$s_k\asymp k^{1/(1+a_*)}=k^{4/(5+\vartheta)}$, substitution in
\eqref{eq:discreterate}, followed by a relabeling of the arbitrarily small
loss, proves the claim.
\end{proof}

For comparison, Appendix~\ref{sec:ubuappendix} analyzes the one-gradient
underdamped UBU splitting in the same recent-window framework. Its leading
local stochastic error is $\mathcal{O}(\sqrt\eps\,h^{5/2})$ and yields the
rate-preserving sufficient condition $a\geq(1+\vartheta)/3$ and iteration
exponent $3r_{\rm c}(\delta)/(4+\vartheta)$; see
Theorem~\ref{thm:ubu} and Corollary~\ref{cor:ubuiteration}.

\begin{remark}[Step-size comparison with kinetic annealing]
\label{rem:stepsizecomparison}
Theorem~2.8 of He, Tan, and Wu \cite{HeTanWu2024} for the (kinetic) underdamped Langevin dynamics 
assumes the sufficient grid
condition
\[
 \limsup_{k\to\infty}\Delta t_k\sqrt{T_k}<\infty.
\]
For polynomial steps $\Delta t_k\asymp T_k^{-a}$, this corresponds to
$a\geq1/2$. For UBU, Theorem~\ref{thm:ubu} gives the rate-preserving
sufficient condition $a\geq(1+\vartheta)/3$ and iteration exponent
$3r_{\rm c}(\delta)/(4+\vartheta)$. For the third-order midpoint scheme
analyzed here,
Theorem~\ref{thm:discrete} requires only
$a>\vartheta/2$ for convergence and retains the continuous-time exponent once
\[
 a\geq a_*:=\frac{1+\vartheta}{4}<\frac12.
\]
Indeed, the $\mathcal{O}(h^3)$ local coupling bound for this scheme accumulates over the
recent comparison window as $\sum_{j=m}^{k-1}h_j^3$; see
\eqref{eq:telescoping}.
This produces the term $2a-\vartheta$ in
\eqref{eq:discreterate}. If
\[
 r_{\rm c}(\delta)=\min\left\{\frac\delta E,
                    \frac{1-\vartheta}{2}\right\},
\]
then the choice $a=a_*$ gives iteration exponent
$4r_{\rm c}(\delta)/(5+\vartheta)$, whereas the kinetic sufficient condition
$a=1/2$ gives $2r_{\rm c}(\delta)/3$. The former is larger because
$\vartheta<1$. He, Tan, and Wu \cite[Remark~2.9(iv)]{HeTanWu2024} also record
the sufficient overdamped Euler condition
$\limsup_k\Delta t_kT_k<\infty$, citing an earlier preprint by Tang and Zhou.
The published analysis of Tang, Wu, and Zhou \cite{TangWuZhou2024} uses the
same condition, corresponding to $a\geq1$ and iteration exponent
$r_{\rm c}(\delta)/2$. Thus the presently proved
sufficient-condition comparisons give
\[
 \frac{4r_{\rm c}(\delta)}{5+\vartheta}
 >\max\left\{
       \frac{3r_{\rm c}(\delta)}{4+\vartheta},
       \frac{2r_{\rm c}(\delta)}3
      \right\},
 \qquad
 \min\left\{
       \frac{3r_{\rm c}(\delta)}{4+\vartheta},
       \frac{2r_{\rm c}(\delta)}3
      \right\}
 >\frac{r_{\rm c}(\delta)}2.
\]
The UBU and frozen-force exponents cross at $\vartheta=1/2$: the UBU
exponent is larger for $\vartheta<1/2$ and smaller for $\vartheta>1/2$.
For the numerical landscape in Section~\ref{sec:numerics}, identifying the LSI
barrier with the critical depth gives
$\vartheta=D_{\rm crit}/E=0.250836/0.28\approx0.896$. Hence the UBU
coefficient $3/(4+\vartheta)\approx0.613$ is smaller than the frozen-force
coefficient $2/3$. This crossover separates the regimes favored by the two
sufficient bounds: UBU for $\vartheta<1/2$ and frozen force for
$\vartheta>1/2$. The third-order result improves both discrete sufficient
exponents, while the continuous-time exponents agree as explained in
Remark~\ref{rem:noacceleration}. UBU and the third-order midpoint
scheme each use one interior force evaluation and exact linear Gaussian
subflows. Their different bounds arise because UBU has a centered
$\mathcal{O}(\sqrt\eps\,h^{5/2})$ stochastic local error, while noise reaches the position
component one integration later in the third-order chain and the local endpoint
bound is $\mathcal{O}(h^3)$. The kinetic frozen-force scheme also uses one force evaluation
per step, whereas the cost of the exact-integral scheme depends on the chosen
quadrature.
\end{remark}

Throughout the remainder of this section, constants may depend on the
dimension, the potential, $\lambda,\gamma$, and $c_h$.
Assumption~\ref{ass:discrete} enters through the control estimate and midpoint
quadrature. We also record the grid asymptotics used below. The grid covers an
unbounded time horizon, since finite accumulation would keep $h_k$ bounded away
from zero. Thus $s_k\to\infty$, $h_k\to0$, and $s_{k+1}/s_k\to1$. Applying the
mean value theorem to $s_k^{1+a}$ gives
$s_k^{1+a}\sim(1+a)c_h k$.

\subsection{Local comparison and smoothing}
The first ingredient controls the one-step numerical error and provides the
moments needed to average this error over the numerical law.

For $t\geq0$ and $h>0$, let $Q_{t,h}^{\rm int}$ and $Q_{t,h}^{\rm mid}$
denote the one-step kernels obtained from \eqref{eq:moustep} and
\eqref{eq:midpoint}, respectively, after replacing $(\tau_k,h_k)$ by $(t,h)$.
Let
\begin{align}\label{P:t:t:h}
 P_{t,t+h}f(s):=\E\left[f\left(S_{t+h}^{t,s}\right)\right]
\end{align}
denote the transition operator of the exact time-inhomogeneous process
\eqref{eq:model} started from $s$ at time $t$.

\begin{lemma}[Local coupling and moments]\label{lem:local}
Let $Q_{t,h}\in\left\{Q_{t,h}^{\rm int},Q_{t,h}^{\rm mid}\right\}$, and let
$P_{t,t+h}$ be the exact transition in \eqref{P:t:t:h}. Couple the two using the same
Brownian path and start from $s\in\R^{3d}$. For $h\leq h_*$ and large $t$,
\begin{equation}\label{eq:local}
 \left\|S_h^{\rm num}-S_{t+h}^{\rm exact}\right\|_{L^2(\mathbb P_s)}
 \leq C h^3\left(1+|s|^2\right)
       +\frac{C h^{3/2}}{(t+t_0)\log^{3/2}(t+t_0)}.
\end{equation}
For the exact-integral scheme the first factor $1+|s|^2$ can be replaced by
$1+|s|$. Both schemes satisfy $\sup_{k\geq0}\E|S_k|^p<\infty$ for
$1\leq p\leq4$.
\end{lemma}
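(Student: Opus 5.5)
Under the synchronous coupling, the two processes are compared componentwise on $[0,h]$. The numerical step is described by \eqref{eq:stages}, with the same Brownian motion driving both $Z$ equations, and the exact step by \eqref{eq:model} started at time $t$, both from the same point $s=(x,v,z)$. The differences come from three sources, and I will bound each separately.

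\emph{Position freezing.} The force is $\nabla U(x+uv)$, or its average, instead of $\nabla U(X_{t+u})$.

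\emph{Variable freezing.} The $V$ equation uses $\widehat Z_u$, whose drift contains the frozen $-\lambda v$ in place of $-\lambda V_u$, and $\widetilde X$ is driven by $\widehat V$.

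\emph{Temperature freezing.} The noise amplitude is $\sqrt{2\gamma\eps_t}$ instead of $\sqrt{2\gamma\eps_{t+u}}$.

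The key structural point is that in the third-order chain the noise enters only through $Z$. Hence $V_{t+u}-v=\mathcal O(u)$ and $X_{t+u}-x-uv=\mathcal O(u^2)$ pathwise in $L^2$, with constants linear in $1+|s|$ and no $\sqrt u$ Brownian term. This is what raises the local order from $h^{5/2}$ to $h^3$.

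I will proceed as follows.

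(i) Write the differences $e^x_u,e^v_u,e^z_u$ and note that the linear part of the drift is common to both systems. Then the difference satisfies a linear ODE with Lipschitz coefficients plus forcing terms, and Gr\"onwall on $[0,h]$ reduces everything to bounding $\int_0^h$ of the forcings.

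(ii) Force forcing. I will bound $\nabla U(X_{t+u})-\nabla U(x+uv)$ by $K|X_{t+u}-x-uv|=\mathcal O(u^2(1+|s|))$ in $L^2$, using the moment bounds \eqref{eq:moments}. Integrated twice, since the force enters $X$ through $V$, this contributes $\mathcal O(h^3)$ to the $V$ component at the level $\int_0^h u^2\,\dd u$. I expect the $X$ component to get an even smaller contribution. For the $V$ component I must be more careful: replacing $g_u$ by $\bar g$ in $\widehat V$ changes it only by $\int_0^u(g_r-\bar g)\,\dd r=\mathcal O(h^2|v|)$. This enters $\widetilde X$ after one more integration and gives $\mathcal O(h^3)$. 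The endpoint $\widetilde V_h$ uses $g_u$ exactly, so no error arises there. For the midpoint version I replace $\Delta U$ by $h\nabla U(x+\tfrac h2 v)$. The quadrature error is $\mathcal O(h^3\|\nabla^3U\|_\infty|v|^2)$ by the symmetric midpoint rule. This is where Assumption~\ref{ass:discrete} enters, and it explains the factor $1+|s|^2$ in place of $1+|s|$.

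(iii) $Z$-freezing. The gap $\widehat Z_u-Z_{t+u}$ is driven by $\lambda(V_{t+u}-v)=\mathcal O(u)$, so it is $\mathcal O(u^2)$. Feeding it into $V$ gives $\mathcal O(h^3)$. The $\widetilde Z$ equation uses $\widehat V$, whose gap from $V$ is again $\mathcal O(u^2)$.

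(iv) Temperature. The noise difference is $\int_0^u(\sqrt{2\gamma\eps_{t+r}}-\sqrt{2\gamma\eps_t})\,\dd B_r$. By It\^o isometry and $|\eps'|\lesssim\log(t+t_0)/(t+t_0)\cdot\eps^{3}$, which follows from \eqref{eq:coolingassumption} together with $\eps\asymp1/\log$, its $L^2$ norm is at most $C h^{3/2}\sup|\sqrt{\eps}'|$. This gives a term of order $h^{3/2}/((t+t_0)\log^{3/2}(t+t_0))$. The main obstacle is tracking exactly this power of $\log$. I would use $|\tfrac{\dd}{\dd t}\sqrt{\eps_t}|=|\eps'|/(2\sqrt{\eps})$ with $|\eps'|\lesssim\eps^2/(t+t_0)$ for the canonical schedule. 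For the general schedule I would use the bound allowed by \eqref{eq:coolingassumption}, and take that as the stated second term.

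(v) Uniform moments. Both schemes are Gaussian transitions \eqref{eq:moustep} with mean $m_h(s)$. I apply the Lyapunov function $V_a$ from the proof of Theorem~\ref{thm:main} and its powers $V_a^p$ for $p\le 2$. Then $\E[V_a^p(S_{k+1})\mid S_k]\le(1-ch)V_a^p(S_k)+Ch$ for $h\le h_*$: this is the continuous drift computation plus the local error already bounded, with polynomial moments controlled by the bounded Hessian. Iterating with $\sum h_k=\infty$ gives $\sup_k\E|S_k|^p<\infty$ for $p\le4$, given the initial fourth moment in Assumption~\ref{ass:discrete}.
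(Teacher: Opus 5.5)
Your local-error argument matches the paper's proof and is fine. It uses synchronous coupling and interpolation residuals of size $\mathcal O(h^2(1+|s|))$, which hold because noise enters only through $Z$. It adds the $\nabla^3U$ midpoint quadrature bound and It\^o isometry with $|(\sqrt{\eps_t})'|\lesssim\eps_t^{5/2}\log(t+t_0)/(t+t_0)$. (The short-time moments you need in (ii) are the conditional bounds $\sup_{u\leq h}\|S_{t+u}\|_{L^p(\mathbb P_s)}\leq C(1+|s|)$ from Gr\"onwall, not the uniform-in-time bound \eqref{eq:moments}, but that is cosmetic.)

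\textbf{The gap.} The genuine problem is in step (v), for the midpoint scheme. You obtain the discrete Lyapunov recursion as ``the continuous drift computation plus the local error already bounded.'' For the midpoint version, however, the local error you proved is $Ch^3(1+|s|^2)$, which is quadratic in $|s|$. Insert it through $|\mathcal V^p(y)-\mathcal V^p(y')|\leq C_p(1+|y|^{2p-1}+|y'|^{2p-1})|y-y'|$. This gives a perturbation of $Q_{t,h}\mathcal V^p(s)$ of order $h^3(1+|s|^{2p+1})$. That is not dominated by $c\,h\,\mathcal V^p(s)\asymp h(1+|s|^{2p})$ once $|s|\gtrsim h^{-2}$. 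So for no fixed $h$ does the recursion $\E[\mathcal V^p(S_{k+1})\mid S_k]\leq(1-ch)\mathcal V^p(S_k)+Ch$ follow. For the exact-integral scheme, whose local error is $Ch^3(1+|s|)$, your argument does go through.

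\textbf{The fix.} For moment stability only, use the coarser Lipschitz-only quadrature estimate
$|\Delta U_{\rm mid}-\Delta U|\leq K|v|\int_0^h|u-h/2|\,\dd u=\tfrac14Kh^2|v|$. This force increment enters the $(X,V,Z)$ endpoint mean with coefficients $-h/2$, $-1$, and $\lambda/\gamma-\lambda(1-e^{-\gamma h})/(\gamma^2h)=\mathcal O(h)$. Combined with the exact-integral estimate, it gives $\|S_h^{\rm num}-S_h^{\rm frozen}\|_{L^q(\mathbb P_s)}\leq C_qh^2(1+|s|)$ for either scheme, which is linear in $|s|$. Apply H\"older with the linear-growth conditional moments of the two Gaussian endpoints. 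The perturbation is then at most $C_ph^2(1+|s|^{2p})\leq C_ph^2\mathcal V^p(s)$. This is absorbed into $e^{-c_ph}\mathcal V^p(s)$ for small $h$ and gives the recursion you want. The sharp $h^3(1+|s|^2)$ bound is needed only afterwards, when it is averaged against numerical laws whose moments are already controlled.
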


\begin{proof}
Throughout this proof,
$\|Y\|_{L^p(\mathbb P_s)}:=(\E_s|Y|^p)^{1/p}$ denotes the norm under the
synchronous coupling conditional on the initial state at $s$.
First freeze the exact temperature at $\eps_t$. For the exact-integral
interpolation \eqref{eq:stages}, elementary integrations, the Lipschitz bound
on $\nabla U$, and the Gaussian moment bounds for $\widehat Z$ give, uniformly
for $0\leq u\leq h$ and any fixed $p\geq2$,
\[
 \widehat Z_u=e^{-\gamma u}z
 -\frac{\lambda(1-e^{-\gamma u})}{\gamma}v
 +\sqrt{2\gamma\eps_t}\int_0^u e^{-\gamma(u-r)}\dd B_r,
\]
and hence
\[
\sup_{u\leq h}\left\|\widehat Z_u\right\|_{L^p(\mathbb P_s)}\leq C(1+|s|).
\]
Moreover,
\[
 |g_u|\leq |\nabla U(0)|+K(|x|+h|v|),\qquad
 \widehat V_u-v=-u\bar g+\lambda\int_0^u\widehat Z_r\dd r.
\]
These identities give
\begin{align*}
 &\left\|\widehat V_u-v\right\|_{L^p(\mathbb P_s)}\leq Ch(1+|s|),\\
 &\left\|\widetilde X_u-x-uv\right\|_{L^p(\mathbb P_s)}
 +\left\|\widehat Z_u-\widetilde Z_u\right\|_{L^p(\mathbb P_s)}
 \leq Ch^2(1+|s|),\\
 &\left|\widehat V_u-\widetilde V_u\right|
 =\left|\int_0^ug_r\dd r-u\bar g\right|\leq CKh^2|v|,
\end{align*}
where the last inequality is obtained by using
$|g_r-\bar g|\leq Kh|v|$ and integrate over $[0,u]$, 
and the second line above is established 
by using
\[
\widetilde X_u-x-uv=\int_0^u\left(\widehat V_r-v\right)\dd r, 
\]
and
\[
\widehat Z_u-\widetilde Z_u
=\lambda\int_0^u e^{-\gamma(u-r)}\left(\widehat V_r-v\right)\dd r.
\]
Relative to the exact drift at
$\widetilde S_u=\left(\widetilde X_u,\widetilde V_u,\widetilde Z_u\right)$, the three
interpolation residuals are
\begin{equation}\label{eq:residuals}
 \begin{pmatrix}
 \widehat V_u-\widetilde V_u\\
 \nabla U(\widetilde X_u)-g_u+\lambda(\widehat Z_u-\widetilde Z_u)\\
 -\lambda(\widehat V_u-\widetilde V_u)
 \end{pmatrix}.
\end{equation}
Their $L^p$ norm is at most $Ch^2(1+|s|)$. The noises cancel under synchronous
coupling. Let $S_u^{\rm frozen}=\left(X_u^{\rm frozen},V_u^{\rm frozen},
Z_u^{\rm frozen}\right)$ be the solution, started from $s$ and driven by the same
Brownian path, of
\begin{equation}\label{eq:frozenprocess}
\begin{aligned}
 \dd X_u^{\rm frozen}&=V_u^{\rm frozen}\dd u,\\
 \dd V_u^{\rm frozen}&=\left(-\nabla U\left(X_u^{\rm frozen}\right)
                      +\lambda Z_u^{\rm frozen}\right)\dd u,\\
 \dd Z_u^{\rm frozen}&=\left(-\lambda V_u^{\rm frozen}
                      -\gamma Z_u^{\rm frozen}\right)\dd u
                      +\sqrt{2\gamma\eps_t}\dd B_u.
\end{aligned}
\end{equation}
If $E_u=\widetilde S_u-S_u^{\rm frozen}$, the
Lipschitz drift and \eqref{eq:residuals} imply
\[
 \|E_u\|_{L^p(\mathbb P_s)}
 \leq C\int_0^u\|E_r\|_{L^p(\mathbb P_s)}\dd r
      +C\int_0^u h^2(1+|s|)\dd r,
 \qquad 0\leq u\leq h.
\]
Gr\"{o}nwall's inequality therefore gives
\[
\|E_h\|_{L^p(\mathbb P_s)}\leq Ch^3(1+|s|).
\]

For midpoint quadrature,
\[
 \left|\Delta U_{\rm mid}-\Delta U\right|
 \leq \frac{\|\nabla^3U\|_\infty}{24}h^3|v|^2.
\]
The coefficients multiplying this difference in the $(X,V,Z)$ endpoint mean
are, respectively, 
\begin{equation}\label{eq:forcecoefficients}
 -\frac h2,\qquad -1,\qquad
 \frac\lambda\gamma-\frac{\lambda(1-e^{-\gamma h})}{\gamma^2h}.
\end{equation}
Indeed, writing $q=\Delta U_{\rm mid}-\Delta U$, the two force treatments
give $\delta\widehat V_u=-uq/h$. Consequently,
\begin{align}\label{provide:1}
 \delta V_h=-q,\qquad
 \delta X_h=\int_0^h\delta\widehat V_u\dd u=-\frac h2q,
\end{align}
and
\begin{align}\label{provide:2}
 \delta Z_h
 =-\lambda\int_0^he^{-\gamma(h-u)}\delta\widehat V_u\dd u
 =\frac{\lambda q}{h}\int_0^hu e^{-\gamma(h-u)}\dd u
 =\left(\frac\lambda\gamma
 -\frac{\lambda(1-e^{-\gamma h})}{\gamma^2h}\right)q,
\end{align}
Therefore, \eqref{provide:1} and \eqref{provide:2} provide the coefficients in \eqref{eq:forcecoefficients}.
They are bounded for $h\leq h_*$. Together with the exact-integral estimate,
this proves the first term in \eqref{eq:local} for the midpoint scheme.
We will also use the
coarser bound $C K h^2|v|$, which follows using only the Lipschitz gradient.

The actual temperature changes within a step. By
\eqref{eq:coolingassumption}, $\eps_t\asymp1/\log(t+t_0)$ and
\[
 |(\sqrt{\eps_t})'|
 \leq C\eps_t^{5/2}\frac{\log(t+t_0)}{t+t_0}
 \leq\frac{C}{(t+t_0)\log^{3/2}(t+t_0)}.
\]
Hence the difference between the frozen and exact noise coefficients is
\[
 \sigma_{t+u}-\sigma_t
 =\sqrt{2\gamma}\left(\sqrt{\eps_{t+u}}-\sqrt{\eps_t}\right),
\]
where
\[
 \left|\sqrt{\eps_{t+u}}-\sqrt{\eps_t}\right|
 \leq\int_t^{t+u}|(\sqrt{\eps_r})'|\dd r
 \leq\frac{Cu}{(t+t_0)\log^{3/2}(t+t_0)}.
\]
The It\^o isometry gives
\[
 \left\|\int_0^h
   \left(\sqrt{\eps_{t+u}}-\sqrt{\eps_t}\right)\dd B_u\right\|_{L^2}
 \leq \frac{C h^{3/2}}{(t+t_0)\log^{3/2}(t+t_0)}.
\]
The Lipschitz drift bound and Gr\"{o}nwall's inequality propagate this perturbation
to the full state and give the second term of \eqref{eq:local}.

We next establish the uniform moment bounds
$\sup_{k\geq0}\E|S_k|^p<\infty$, $1\leq p\leq4$, for both numerical schemes.
The Lyapunov function $V_a$ used in the continuous proof can here be denoted
$\mathcal V$ to avoid confusion with the step exponent. It satisfies
\[
L_\eps\mathcal V^p\leq-c_p\mathcal V^p+C_p, 
\]
uniformly for
$0<\eps\leq\eps_0$, and
\[
|\nabla\mathcal V^p(s)|\leq C_p\left(1+|s|^{2p-1}\right).
\]
The frozen exact process \eqref{eq:frozenprocess} consequently satisfies
\[
 \E_s\left[\mathcal V^p\left(S_h^{\rm frozen}\right)\right]
 \leq e^{-c_ph}\mathcal V^p(s)+C_ph.
\]
The residual argument in \eqref{eq:residuals} was stated for every fixed
$L^q$, so the exact-integral endpoint satisfies
\[
 \left\|S_h^{\rm int}-S_h^{\rm frozen}\right\|_{L^q(\mathbb P_s)}
 \leq C_qh^3(1+|s|),\qquad q\geq2.
\]
For moment stability of the midpoint endpoint we use the coarser estimate
\[
 |\Delta U_{\rm mid}-\Delta U|
 \leq K|v|\int_0^h|u-h/2|\dd u
 \leq \frac14Kh^2|v|.
\]
The three endpoint coefficients multiplying this force increment are bounded
by $Ch$, $1$, and $Ch$, respectively, for small $h$.
Indeed, by \eqref{eq:forcecoefficients},
$\left|h/2\right|\leq Ch$, 
while Taylor's formula gives, uniformly for $0<h\leq h_*$,
\[
 \left|\frac\lambda\gamma
 -\frac{\lambda(1-e^{-\gamma h})}{\gamma^2h}\right|
 =\frac\lambda\gamma
 \left|1-\frac{1-e^{-\gamma h}}{\gamma h}\right|
 \leq Ch.
\]
Hence, by the triangle inequality,
\begin{equation}\label{eq:coarsemomentlocal}
 \left\|S_h^{\rm num}-S_h^{\rm frozen}\right\|_{L^q(\mathbb P_s)}
 \leq C_qh^2(1+|s|),
\end{equation}
for either scheme. Their conditional means have linear growth in $s$ and their
Gaussian covariances are uniformly bounded for $h\leq h_*$; consequently both
endpoints have conditional $q$-moment bounded by $C_q(1+|s|^q)$.
We use these statements with $q=2p$ below.
The mean value inequality gives
\[
 \left|\mathcal V^p(y)-\mathcal V^p(y')\right|
 \leq C_p\left(1+|y|^{2p-1}+|y'|^{2p-1}\right)|y-y'|.
\]
Applying H\"older's inequality to the frozen endpoint coupling therefore yields
\[
 \left|Q_{t,h}\mathcal V^p(s)
       -\E_s\left[\mathcal V^p\left(S_h^{\rm frozen}\right)\right]\right|
 \leq C_ph^2\left(1+|s|^{2p}\right)
 \leq C_ph^2\mathcal V^p(s).
\]
Consequently,
\[
 Q_{t,h}\mathcal V^p(s)
 \leq \left(e^{-c_ph}+C_ph^2\right)\mathcal V^p(s)+C_ph.
\]
Since $h_k\to0$, the coefficient is at most $1-c_ph_k/2$ for all sufficiently
large $k$. Thus, for $p=1,2$,
\begin{equation}\label{eq:discrete-lyapunov-recursion}
\E\left[\mathcal V^p(S_{k+1})\right]
 \leq\left(1-\frac{c_p}{2}h_k\right)\E\left[\mathcal V^p(S_k)\right]+C_ph_k.
\end{equation}
The initial fourth moment, the linear growth of the numerical mean, and the
Gaussian noise give, for $p=1,2$,
\[
 \E\left[\mathcal V^p(S_{k+1})\mid S_k=s\right]
 \leq C_p(1+\mathcal V^p(s)).
\]
The conditional moment bound and the finite fourth moment of the initial law
imply by induction that $Y_k:=\E[\mathcal V^p(S_k)]$ is finite for every $k$.
For all sufficiently large $k$, \eqref{eq:discrete-lyapunov-recursion}
also shows that $Y_k$ decreases whenever $Y_k\geq2C_p/c_p$.
After including the finite initial segment, this
gives $\sup_{k\geq0}Y_k<\infty$ for $p=1,2$.
Since
$\mathcal V(s)\asymp1+|s|^2$, the case $p=2$ yields
$\sup_{k\geq0}\E|S_k|^4<\infty$,
and, for $1\leq q\leq4$,
\[
 \E|S_k|^q\leq\left(\E|S_k|^4\right)^{q/4}.
\]
This proves all the remaining moment bounds.
\end{proof}

Grid-point initialization of the distorted entropy additionally requires
regularity of the numerical law. The Gaussian covariance of one step provides
this regularity and a quantitative entropy bound.

\begin{lemma}[Gaussian smoothing at numerical grid points]\label{lem:gaussian}
For the exact-integral scheme \eqref{eq:stages}--\eqref{eq:moustep} and its
midpoint version \eqref{eq:midpoint}, the numerical law at every positive grid
point has a smooth
positive density. For $h\leq h_*$,
\begin{equation}\label{eq:covbound}
 \lambda_{\min}(\Sigma_h)\geq c h^5,\qquad
 \det\Sigma_h\geq c h^{9d}.
\end{equation}
Consequently, for all sufficiently large $k$,
\begin{equation}\label{eq:gridfisher}
 \cH_{\eps_{\tau_k}}(\mathcal L(S_k))
              \leq C s_k^{5a}(\log s_k)^2.
\end{equation}
\end{lemma}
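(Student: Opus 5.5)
The plan has three steps: a small-$h$ expansion of the noise kernel $\mathbf k$ gives the covariance bounds \eqref{eq:covbound}; the Gaussian-mixture structure of the numerical law gives a smooth positive density; convexity of the Fisher information and of KL along mixtures then gives \eqref{eq:gridfisher}.

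\emph{Covariance bounds.} Expanding \eqref{eq:noisekernels} as $r\to0$ gives
\[
 k_1(r)=\tfrac{\lambda}{2}r^2+\mathcal{O}(r^3),\qquad
 k_2(r)=\lambda r+\mathcal{O}(r^2),\qquad
 k_3(r)=1+\mathcal{O}(r).
\]
Set $D_h:=\operatorname{diag}(h^{5/2},h^{3/2},h^{1/2})$ and substitute $r=h\rho$ in $K_h=\int_0^h\mathbf k\mathbf k^\top\dd r$. Then $D_h^{-1}K_hD_h^{-1}$ converges, as $h\to0$, to the Gram matrix on $[0,1]$ of $(\lambda\rho^2/2,\lambda\rho,1)$. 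Since $\lambda>0$ these three monomials are linearly independent, so this Gram matrix is positive definite. The rescaled correction terms are $\mathcal{O}(h)$ uniformly, so for $h\le h_*$ the rescaled matrix has smallest eigenvalue at least $c>0$. Hence $K_h\succeq cD_h^2\succeq ch^5 I_3$ and $\det K_h\geq ch^{5+3+1}=ch^9$. Taking the Kronecker product with $\Id_d$ and multiplying by $2\gamma$ gives \eqref{eq:covbound}.

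\emph{Smooth positive density.} For $k\ge1$,
\[
 \mathcal L(S_k)=\int\mathcal N\left(m_{h_{k-1}}(s),\eps_{\tau_{k-1}}\Sigma_{h_{k-1}}\right)\mathcal L(S_{k-1})(\dd s),
\]
a mixture of nondegenerate Gaussians. Its density is therefore positive, and smoothness follows by differentiating under the integral. This is legitimate because the Gaussian derivatives are bounded uniformly in the mean.

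\emph{Fisher-information part.} Write $\eps=\eps_{\tau_k}$, $\eps'=\eps_{\tau_{k-1}}$ and $h=h_{k-1}$. By \eqref{eq:coolingassumption} and $s_k/s_{k-1}\to1$, $\eps\asymp\eps'\asymp1/\log s_k$. I will use
\[
 |w|^2\le2|\nabla\log\rho|^2+2\eps^{-2}|\nabla H|^2 .
\]
Joint convexity of $(\rho,\nabla\rho)\mapsto|\nabla\rho|^2/\rho$ bounds the raw Fisher information of the mixture by the average raw Fisher information of its Gaussian components. Each component has raw Fisher information $\operatorname{tr}\left((\eps'\Sigma_h)^{-1}\right)\le C/(\eps'h^5)\le C(\log s_k)s_k^{5a}$. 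The second term is at most $C\eps^{-2}(1+\E|S_k|^2)\le C(\log s_k)^2$ by the moment bound of Lemma~\ref{lem:local}. Since $M\preceq\|M\|\Id$, this gives $\cI_M\le C s_k^{5a}\log s_k+C(\log s_k)^2$.

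\emph{Entropy part.} Joint convexity of KL gives
\[
 \KL(\mathcal L(S_k)\mid\pi_\eps)\le\E\left[\KL\left(\mathcal N(m_h(S_{k-1}),\eps'\Sigma_h)\,\middle|\,\pi_\eps\right)\right].
\]
Each component contributes at most $-\frac12\log\det(2\pi e\,\eps'\Sigma_h)+\eps^{-1}\E H+\log\mathcal Z_\eps$.
\begin{itemize}
\item The log-determinant term is $\mathcal{O}(\log s_k)$ by \eqref{eq:covbound}, since $\log(1/h)=\mathcal{O}(\log s_k)$ and $\log(1/\eps')=\mathcal{O}(\log\log s_k)$.
\item The energy term is $\mathcal{O}(\log s_k)$ by the quadratic bound \eqref{eq:quadraticU}, the moment bounds, and the uniformly bounded Gaussian covariance.
\item The normalization term satisfies $\log\mathcal Z_\eps=\mathcal{O}(\log(1/\eps))$, using $U\ge0$, quadratic confinement from below, and the quadratic upper bound \eqref{eq:quadraticU}.
\end{itemize}
Multiplying by $A_0/\eps\asymp\log s_k$ gives $\mathcal{O}((\log s_k)^2)$. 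Adding the Fisher part yields \eqref{eq:gridfisher}.

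\emph{Main obstacle.} The delicate step is the uniform lower eigenvalue bound. The explicit $K_h$ in \eqref{eq:explicitcovariance} nearly cancels at small $h$, so I would avoid it and argue through the rescaled Gram-matrix limit instead. For that limit I need the expansion errors to be uniform after the anisotropic scaling $D_h$. This holds because each component's correction is one power of $r$ higher than its leading term.
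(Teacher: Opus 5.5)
Your proposal is correct and follows the paper's proof essentially step for step: the same anisotropic rescaling by $D_h$ to a positive-definite Gram-matrix limit for \eqref{eq:covbound}, the same Gaussian-mixture convexity for the raw Fisher information (your joint convexity of $(\rho,\nabla\rho)\mapsto|\nabla\rho|^2/\rho$ is the paper's conditional-Jensen score argument giving $\operatorname{tr}C_k^{-1}$), and convexity of KL (equivalently, concavity of differential entropy) for the entropy term. One small remark: the entropy bound needs only the upper bound $\log\mathcal Z_\eps\leq\log\mathcal Z_{\eps_0}$, which follows from $H\geq0$, so you can drop your two-sided claim $\log\mathcal Z_\eps=\mathcal{O}(\log(1/\eps))$; its lower side would in any case need $U(x)\leq\frac K2|x-x_\star|^2$ at a minimizer $x_\star$, not \eqref{eq:quadraticU}.
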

\begin{proof}
The endpoint noise kernels and their covariance are given explicitly in
\eqref{eq:noisekernels}--\eqref{eq:explicitcovariance}; they follow by
integrating successively the $\widehat Z$, $\widehat V$, $\widetilde X$, and
$\widetilde Z$ equations with the same Brownian driver.
The kernels in \eqref{eq:noisekernels} have leading terms
$(\lambda r^2/2,\lambda r,1)$, with
remainders $\mathcal{O}(r^3),\mathcal{O}(r^2),\mathcal{O}(r)$, respectively.
Let $D_h=\operatorname{diag}\left(h^{5/2},h^{3/2},h^{1/2}\right)\otimes\Id_d$.
Directly integrating these kernels gives, as $h\downarrow0$,
\[
 D_h^{-1}\Sigma_hD_h^{-1}\longrightarrow
 2\gamma\begin{pmatrix}
 \lambda^2/20&\lambda^2/8&\lambda/6\\
 \lambda^2/8&\lambda^2/3&\lambda/2\\
 \lambda/6&\lambda/2&1
 \end{pmatrix}\otimes\Id_d.
\]
The scalar matrix is the Gram matrix of the linearly independent polynomials
$(\lambda u^2/2,\lambda u,1)$ in $L^2(0,1)$, hence is positive definite.
The scaled covariance is therefore bounded below by a fixed positive
multiple of the identity for all sufficiently small $h$. Consequently
$\Sigma_h\succeq c\operatorname{diag}(h^5,h^3,h)\otimes\Id_d$;
taking $h_*\leq1$ and taking determinants proves \eqref{eq:covbound}.
The covariance is identical for midpoint quadrature. In particular, the
actual transition covariance has lower bound $c\eps_t h^5\Id_{3d}$, which records
the vanishing-temperature scale explicitly.
Indeed, the midpoint replacement changes only the conditional mean, so both
schemes have transition covariance
$C_{t,h}=\eps_t\Sigma_h$.
Since $h\leq1$,
\[
 \Sigma_h\succeq c\operatorname{diag}\left(h^5,h^3,h\right)\otimes\Id_d
 \succeq ch^5\Id_{3d},
\]
and therefore
\[
 C_{t,h}\succeq c\eps_t h^5\Id_{3d},\qquad
 \det C_{t,h}=\eps_t^{3d}\det\Sigma_h>0.
\]

A mixture of translates of a Gaussian with covariance $C_k$ has raw Fisher
information at most $\operatorname{tr}C_k^{-1}$: its score is the conditional
expectation of the Gaussian score, so Jensen's inequality applies.
More precisely, write $Y=A+\xi$, where, conditionally on the random mean $A$,
$\xi\sim\mathcal N(0,C_k)$. If $\mu_A$ is the law of $A$ and
$\varphi_{C_k}$ is the density of $\mathcal N(0,C_k)$, then
\begin{equation}\label{eq:mixturedensity}
 \rho_Y(y)=\int_{\R^{3d}}\varphi_{C_k}(y-a)\,\mu_A(\dd a).
\end{equation}
Differentiation under the mixture integral \eqref{eq:mixturedensity}
gives
\[
 \nabla\log\rho_Y(y)
 =-\E\left[C_k^{-1}(Y-A)\mid Y=y\right].
\]
Conditional Jensen's inequality then yields
\begin{align*}
 \cI^{\rm raw}(\rho_Y)
 =\E\left|\E\left[C_k^{-1}\xi\mid Y\right]\right|^2
 \leq\E\left|C_k^{-1}\xi\right|^2
 =\operatorname{tr}\left(C_k^{-1}\right).
\end{align*}
Here
$C_k=\eps_{\tau_{k-1}}\Sigma_{h_{k-1}}$. Thus
\[
 \cI\left(\nu_k\mid\pi_{\eps_{\tau_k}}\right)
 \leq \frac{C}{\eps_{\tau_{k-1}}h_{k-1}^5}
       +\frac{C}{\eps_{\tau_k}^2}\E\left(1+|S_k|^2\right).
\]
Convexity of $\Psi(u):=u\log u$ (equivalently, concavity of differential
entropy; see \cite[Section~8.6]{CoverThomas2006}) bounds the negative
differential entropy of the mixture by that of the Gaussian component.
Indeed, Jensen's inequality, Fubini's theorem, and translation invariance give
\begin{align*}
 \int_{\R^{3d}}\Psi(\rho_k(y))\,\dd y
 &\leq\int_{\R^{3d}}\int_{\R^{3d}}
       \Psi\!\left(\varphi_{C_k}(y-a)\right)\,\mu_A(\dd a)\dd y\\
 &=\int_{\R^{3d}}\varphi_{C_k}(z)\log\varphi_{C_k}(z)\,\dd z\\
 &=-\frac{3d}{2}\log(2\pi e)-\frac12\log\det C_k.
\end{align*}
For any $\sigma>0$, nonnegativity of the relative entropy with respect to
$\mathcal N(0,\sigma^2\Id_{3d})$ also gives
\[
 \int_{\R^{3d}}\rho_k(y)\log\rho_k(y)\,\dd y
 \geq-\frac{3d}{2}\log\left(2\pi\sigma^2\right)
      -\frac{1}{2\sigma^2}\E|S_k|^2> -\infty.
\]
Moreover, $\log\mathcal Z_\eps$ is bounded above for $\eps\leq\eps_0$, because
$H\geq0$ and $\mathcal Z_{\eps_0}<\infty$.
Since, for $\nu=\rho\,\dd y$,
\[
 \KL(\nu\mid\pi_\eps)
 =\int_{\R^{3d}}\rho(y)\log\rho(y)\,\dd y
  +\frac{\nu(H)}{\eps}+\log\mathcal Z_\eps,
\]
together with \eqref{eq:covbound}, Lemma~\ref{lem:local}, and
$h_{k-1}\asymp s_k^{-a}$, it yields that
\begin{align*}
 -\log\det C_k&\leq C\log s_k,
 &\frac{\nu_k(H)}{\eps_{\tau_k}}&\leq C\log s_k,\\
 \KL\left(\nu_k\mid\pi_{\eps_{\tau_k}}\right)&\leq C\log s_k,
 &\cI\left(\nu_k\mid\pi_{\eps_{\tau_k}}\right)
   &\leq Cs_k^{5a}(\log s_k)^2.
\end{align*}
Since $\cI_M\leq\|M\|\cI$, substitution in \eqref{eq:functional} proves
\eqref{eq:gridfisher}. Smoothness and positivity
follow from convolution with the nondegenerate Gaussian, while the required
second and fourth moments follow from Lemma~\ref{lem:local}.
\end{proof}

To propagate the local endpoint error to the terminal optimization event, we next
establish a smoothing estimate in the following lemma for the exact inhomogeneous transition.

\begin{lemma}[One-unit smoothing by a finite controlled coupling]\label{lem:smoothing}
Assume $\|\nabla^3U\|_\infty<\infty$. For the exact inhomogeneous transition
operator, any bounded measurable $f$, and any $s,s'\in\R^{3d}$,
\begin{equation}\label{eq:smoothing}
 |P_{t,t+1}f(s)-P_{t,t+1}f(s')|
 \leq C\eps_{t+1}^{-1/2}(1+|s|+|s'|)|s-s'|\|f\|_\infty.
\end{equation}
For $T\geq t+1$,
\[
 |P_{t,T}f(s)-P_{t,T}f(s')|
 \leq C\eps_{t+1}^{-1/2}(1+|s|+|s'|)|s-s'|\|f\|_\infty.
\]
For $0\leq T-t\leq1$ and globally Lipschitz $f$,
\[
\operatorname{Lip}(P_{t,T}f)\leq e^C\operatorname{Lip}(f).
\]
The constants are independent of $t,T,s,s'$.
\end{lemma}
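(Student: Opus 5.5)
The plan is to prove \eqref{eq:smoothing} with an explicit finite-time controlled coupling plus Girsanov's theorem. The two remaining statements then follow from the Markov property and a synchronous coupling.

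\emph{Construction of the control.} Let $S_r=(X_r,V_r,Z_r)$, $r\in[t,t+1]$, be the exact process started from $s$ at time $t$. I build a second process $S'_r$ started from $s'$ and driven by the same Brownian motion plus a drift $\sqrt{2\gamma\eps_r}\,u_r\,\dd r$ in the $Z$-equation only. The control is chosen so that $S'_{t+1}=S_{t+1}$ pathwise.

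Write $\delta x=X'-X$, $\delta v=V'-V$, $\delta z=Z'-Z$. The first two equations force $\delta v=\delta x'$ and
\[
\lambda\,\delta z=\delta x''+\nabla U(X+\delta x)-\nabla U(X).
\]
So I prescribe $\delta x_r=\phi(r-t)$ with $\phi$ a deterministic polynomial in time of degree at most $5$, with random coefficients measurable at time $t$. It should satisfy:
\begin{itemize}
\item $\phi(0)=x'-x$ and $\phi'(0)=v'-v$;
\item $\phi''(0)=\lambda(z'-z)-(\nabla U(x')-\nabla U(x))$;
\item $\phi(1)=\phi'(1)=\phi''(1)=0$.
\end{itemize}
Then $\delta z_{t+1}=0$, because the $\nabla U$-difference vanishes when $\phi(1)=0$. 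All coefficients of $\phi$ are bounded by $C|s-s'|$, using $\|\nabla^2U\|\le K$.

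Define $\delta z$ by the display above and read off the control from the $Z$-equation:
\[
\sqrt{2\gamma\eps_r}\,u_r=\delta z_r'+\lambda\,\delta v_r+\gamma\,\delta z_r.
\]
Here $\lambda\,\delta z'=\phi'''+\nabla^2U(X+\phi)(V+\phi')-\nabla^2U(X)V$. Using $\|\nabla^3U\|_\infty<\infty$ and the bounded Hessian gives
\[
|u_r|\le C\,\eps_r^{-1/2}\bigl(1+|V_r|\bigr)|s-s'|\le C\,\eps_{t+1}^{-1/2}\bigl(1+|V_r|\bigr)|s-s'|,
\]
where the last step uses that $\eps$ is non-increasing. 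This is exactly where the factor $(1+|s|)$ and the assumption on $\nabla^3U$ enter.

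\emph{Girsanov and total variation.} The controlled process, started from $s'$, is the exact process under the measure $\mathbb Q$ with density $\exp\bigl(-\int u\,\dd B-\tfrac12\int|u|^2\bigr)$. Since $S'_{t+1}=S_{t+1}$, I get
\[
P_{t,t+1}f(s')=\E_{\mathbb Q}\bigl[f(S_{t+1})\bigr].
\]
Hence the left side of \eqref{eq:smoothing} is at most $2\|f\|_\infty\,\mathrm{TV}(\mathbb P,\mathbb Q)$. By Pinsker this is bounded by
\[
\|f\|_\infty\Bigl(2\,\E\!\int_t^{t+1}|u_r|^2\,\dd r\Bigr)^{1/2}.
\]
The finite-time second-moment bound for $V_r$ from the Lyapunov estimate \eqref{eq:moments}, started at a point, gives $\E|V_r|^2\le C(1+|s|^2)$. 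This yields \eqref{eq:smoothing}, with $(1+|s|+|s'|)$ covering either choice of reference point.

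\emph{Main obstacle.} Justifying Girsanov is the delicate step, since $u$ is unbounded: it is linear in $V$. My first attempt is localization. Stop at $\tau_R=\inf\{r:|S_r|\ge R\}$, where Novikov holds trivially. Apply the TV bound to the stopped control, and let $R\to\infty$ using $\mathbb P(\tau_R\le t+1)\to0$ and the uniform bound on $\E\int|u|^2$. Alternatively, linear growth of $u$ in the state admits a Beneš-type criterion.

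\emph{Longer horizons and short-time Lipschitz bound.} For $T\ge t+1$, write $P_{t,T}f=P_{t,t+1}(P_{t+1,T}f)$ and use $\|P_{t+1,T}f\|_\infty\le\|f\|_\infty$. For $0\le T-t\le1$ and Lipschitz $f$, couple synchronously: the noise is additive and cancels, and the drift is globally Lipschitz, so Gr\"onwall gives $|S_T-S'_T|\le e^{C(T-t)}|s-s'|$. None of the constants depends on $t$, because only $K$, $\|\nabla^3U\|_\infty$, $\lambda$, $\gamma$ and $\sup\eps$ enter.
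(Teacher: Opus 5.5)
Your proposal is correct and follows the paper's proof essentially step for step. It uses the same degree-five interpolant with the same six boundary conditions, the same endpoint-matching shifted path with control bounded by $C|s-s'|\eps_{t+1}^{-1/2}(1+|V_r|)$ via $\|\nabla^3U\|_\infty$, Girsanov plus Pinsker, and then the Markov property and synchronous coupling. The only difference is how Girsanov is justified: the paper restricts to increments $|s-s'|\le c_*\sqrt{\eps_{t+1}}$ so that Novikov follows from Gaussian exponential moments of $\sup_{u\le1}|S_{t+u}|$, and then partitions the segment from $s$ to $s'$. Your localization (or Bene\v{s}-type) argument instead treats the whole increment at once, which makes that chaining step unnecessary.
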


\begin{proof}
We prove a finite-difference estimate directly at the level of bounded
measurable test functions. On a unit interval let $(X_u,V_u,Z_u)$ start from
$s=(x,v,z)$, and write $e=s'-s=(e_x,e_v,e_z)$. Let $q$ be the vector
polynomial of degree at most five determined by
\begin{align*}
 q(0)&=e_x,&q'(0)&=e_v,&
 q''(0)&=-\nabla U(x+e_x)+\nabla U(x)+\lambda e_z,\\
 q(1)&=0,&q'(1)&=0,&q''(1)&=0.
\end{align*}
Finite-dimensional interpolation and the bounded Hessian imply
$\max_{0\leq j\leq3}\left\|q^{(j)}\right\|_\infty\leq C|e|$.
Define the adapted shifted path
\[
 X_u^e:=X_u+q(u),\qquad V_u^e:=V_u+q'(u),\qquad
 Z_u^e:=Z_u+\lambda^{-1}
       \left(q''(u)+\nabla U(X_u+q(u))-\nabla U(X_u)\right).
\]
It starts from $s'$ and has exactly the same endpoint as the unshifted path.
Its first two equations agree with \eqref{eq:model}; its last equation has
an additional drift
\begin{align}\label{eq:control}
 c_e(u)=&\lambda^{-1}\left(q'''
       +\left[\nabla^2U(X_u+q)-\nabla^2U(X_u)\right]V_u
       +\nabla^2U(X_u+q)q'\right)\notag\\
 &+\lambda q'+\frac\gamma\lambda
          \left(q''+\nabla U(X_u+q)-\nabla U(X_u)\right).
\end{align}
Since $X_u$ has finite variation, differentiating $\nabla U(X_u)$ produces
only its ordinary chain-rule term. The bound
$\|\nabla^3U\|_\infty<\infty$
gives
\begin{equation}\label{eq:finitecontrolbound}
 |c_e(u)|\leq C|e|(1+|V_u|).
\end{equation}

We check the change of measure locally in $e$, with an explicit temperature
dependence. Let
$N_u:=\int_0^u\sqrt{2\gamma\eps_{t+r}}\dd B_r$. Pathwise Gr\"{o}nwall's inequality and
the uniform upper bound on the temperature give
\[
 \sup_{u\leq1}|S_{t+u}|\leq C\left(1+|s|+\sup_{u\leq1}|N_u|\right),
\]
and
\[
 \E_s \left[e^{c\sup_{u\leq1}|S_{t+u}|^2}\right]\leq C e^{C|s|^2},
\]
for some $c>0$ independent of $t,s$. The second estimate follows from
the Gaussian tail of the maximum of a Brownian motion with bounded
deterministic time change, coordinate by coordinate.
Let $b_e(u):=c_e(u)/\sqrt{2\gamma\eps_{t+u}}$. By
\eqref{eq:finitecontrolbound}, Novikov's condition holds whenever
$|e|\leq c_*\sqrt{\eps_{t+1}}$, with $c_*>0$ independent of $s,t$.
Consequently,
\[
 R_e:=\exp\left(-\int_0^1 b_e\cdot\dd B
                         -\frac12\int_0^1|b_e|^2\dd u\right)
\]
has expectation one and defines an equivalent measure
$\dd\mathbb Q_e=R_e\dd\mathbb P$. Under $\mathbb Q_e$,
$B_u+\int_0^u b_e(r)\dd r$ is a standard $d$-dimensional Brownian motion, so that the shifted path solves the
uncontrolled SDE starting at $s'$. The drift
\[
 \mathfrak b(x,v,z)=(v,-\nabla U(x)+\lambda z,-\lambda v-\gamma z)
\]
is globally Lipschitz because $\nabla^2U$ is bounded; this ensures pathwise
uniqueness and hence uniqueness in law.
Endpoint equality therefore yields, for every bounded
measurable $f$,
\[
 P_{t,t+1}f(s')=\E_s[R_e f(S_{t+1})].
\]
Use the relative entropy in the direction $\mathbb P\mid\mathbb Q_e$:
the stochastic integral in $-\log R_e$ has zero mean, and hence
\begin{equation}\label{eq:pathentropy}
 \KL(\mathbb P\mid\mathbb Q_e)
 =\frac12\E_s\left[\int_0^1|b_e(u)|^2\dd u\right]
 \leq\frac{C|e|^2}{\eps_{t+1}}(1+|s|^2).
\end{equation}
Pinsker's inequality on the underlying probability space gives
\[
 \left|P_{t,t+1}f(s')-P_{t,t+1}f(s)\right|
 \leq\|f\|_\infty\sqrt{2\KL(\mathbb P\mid\mathbb Q_e)}
 \leq C\eps_{t+1}^{-1/2}|e|(1+|s|)\|f\|_\infty
\]
for these small increments. For arbitrary $s,s'$, partition their straight
line segment into increments of length at most
$c_*\sqrt{\eps_{t+1}}$. Apply the local estimate at each point and sum;
all intermediate norms are at most $|s|+|s'|$.
More precisely, take
\[
 N=\max\left\{1,
 \left\lceil\frac{|s'-s|}{c_*\sqrt{\eps_{t+1}}}\right\rceil\right\},
 \qquad s_i=s+\frac{i}{N}(s'-s),\quad 0\leq i\leq N.
\]
Then $|s_{i+1}-s_i|\leq c_*\sqrt{\eps_{t+1}}$,
$|s_i|\leq |s|+|s'|$, and
\begin{align*}
 \left|P_{t,t+1}f(s')-P_{t,t+1}f(s)\right|
 &\leq\sum_{i=0}^{N-1}
   \left|P_{t,t+1}f(s_{i+1})-P_{t,t+1}f(s_i)\right|\\
 &\leq C\eps_{t+1}^{-1/2}(1+|s|+|s'|)
       \sum_{i=0}^{N-1}|s_{i+1}-s_i|\|f\|_\infty\\
 &=C\eps_{t+1}^{-1/2}(1+|s|+|s'|)|s'-s|\|f\|_\infty.
\end{align*}
This proves
\eqref{eq:smoothing} globally for all $t$, with the factor
$\eps_{t+1}^{-1/2}$ quantifying the temperature dependence.

For $T\geq t+1$ apply \eqref{eq:smoothing} to
$P_{t+1,T}f$, whose supremum norm is at most $\|f\|_\infty$.
For $T-t\leq1$, synchronous coupling and the uniform Lipschitz drift bound
give 
\[
\left|S_T^s-S_T^{s'}\right|\leq e^C\left|s-s'\right|. 
\]
This proves the last assertion.
\end{proof}

We now combine the three preceding lemmas with the uniform restart estimate to complete
the proof of Theorem~\ref{thm:discrete}.

\subsection{Proof of Theorem~\ref{thm:discrete}}\label{sec:proof_of_theorem_discrete}
\begin{proof}[Proof of Theorem~\ref{thm:discrete}]
Fix $T=\tau_k$, and choose a number
$\beta\in(\vartheta,\min\{1,2a\})$, to be taken arbitrarily close to
$\vartheta$. This interval is nonempty because $a>\vartheta/2$.
Choose $m$ so that $\tau_m\leq T-T^\beta<\tau_{m+1}$. Then
$T-\tau_m\asymp T^\beta$ and $s_j\asymp T$ for $m\leq j\leq k$.
Restart the exact annealed process at time $\tau_m$ with law $\nu_m=\mathcal L(S_m)$.
Its initial distorted entropy is polynomial in $T$ by
Lemma~\ref{lem:gaussian}, and its moments are uniformly bounded by
Lemma~\ref{lem:local}. Proposition~\ref{prop:restart} therefore applies
with a common $M_2$ and common constants for every $m$. It gives, for small
$\eta>0$ with $\vartheta+\eta<\beta$,
\begin{equation}\label{eq:restart}
 \cH_{\eps_T}(\nu_mP_{\tau_m,T})
 \leq C T^{5a}(\log T)^2
             e^{-cT^{\beta-\vartheta-\eta}}
       +C_\eta T^{-1+\vartheta+2\eta}.
\end{equation}
Indeed, on this interval the dissipation rate is at least
$c_\eta T^{-\vartheta-\eta}$ and the additive cooling term is at most
$C_\eta T^{-1+\eta}$. In particular,
\[
 \int_{\tau_m}^T a_\eta(u)\dd u
 \geq c_\eta T^{-\vartheta-\eta}(T-\tau_m)
 \geq c_\eta T^{\beta-\vartheta-\eta}.
\]
The source integral in
\eqref{eq:restartintegral} is bounded by
\[
 C_\eta T^{-1+\eta}\int_0^\infty
          e^{-c_\eta T^{-\vartheta-\eta}u}\dd u
 \leq C_\eta T^{-1+\vartheta+2\eta}.
\]
This proves \eqref{eq:restart}, uniformly in the restarting numerical law.

Fix $0<\zeta<\delta$ and take a smooth function $f:\R^{3d}\to[0,1]$
depending only on $U(x)$, equal to zero when $U(x)\leq\delta-\zeta$ and equal
to one when $U(x)\geq\delta$. Quadratic confinement makes the transition
region compact in $x$, so $f$ is globally Lipschitz. For all $j$ in the window,
Lemma~\ref{lem:smoothing} shows that
$g_j=P_{\tau_{j+1},T}f$ satisfies
\begin{equation}\label{eq:weightedlip}
 |g_j(y)-g_j(z)|
 \leq C_{\delta,\zeta}\sqrt{\log T}\,
              \left(1+|y|+|z|\right)|y-z|.
\end{equation}
This also holds for the final intervals of length less than one, using the
short-time part of that lemma instead of its smoothing part.

The definitions in Lemma~\ref{lem:local} give
$\nu_{j+1}=\nu_jQ_{\tau_j,h_j}$, while the Chapman--Kolmogorov identity for
the exact process \eqref{eq:model} gives
$P_{\tau_j,T}=P_{\tau_j,\tau_{j+1}}P_{\tau_{j+1},T}$. Telescoping these
numerical and exact transitions gives the exact identity
\[
 \nu_k f-\nu_mP_{\tau_m,T}f
 =\sum_{j=m}^{k-1}\nu_j
           \left(Q_{\tau_j,h_j}-P_{\tau_j,\tau_{j+1}}\right)P_{\tau_{j+1},T}f.
\]
Write $\left(S_j^{\rm num},S_j^{\rm ex}\right)$ for the conditional endpoint coupling
from Lemma~\ref{lem:local} given $S_j=s$. Its two marginals satisfy
\[
 \E_s\left[\left(1+\left|S_j^{\rm num}\right|+\left|S_j^{\rm ex}\right|\right)^2\right]
 \leq C\left(1+|s|^2\right).
\]
Therefore \eqref{eq:weightedlip}, Cauchy--Schwarz inequality, and \eqref{eq:local} give
\[
 \left|\left(Q_{\tau_j,h_j}-P_{\tau_j,\tau_{j+1}}\right)g_j(s)\right|
 \leq C\sqrt{\log T}(1+|s|)
       \left(h_j^3\left(1+|s|^2\right)
                    +\frac{h_j^{3/2}}{T\log^{3/2}T}\right).
\]
Integrating against $\nu_j$ uses the uniform third moment already proved.
Finally $\sum_{j=m}^{k-1}h_j=T-\tau_m\asymp T^\beta$ and
$h_j\asymp T^{-a}$ throughout the window. Thus
\begin{align}\label{eq:telescoping}
 |\nu_k f-\nu_mP_{\tau_m,T}f|
 &\leq C_{\delta,\zeta}\sqrt{\log T}
       \sum_{j=m}^{k-1}\left(h_j^3+
                      \frac{h_j^{3/2}}{T\log^{3/2}T}\right)\notag\\
 &\leq C_{\delta,\zeta}\left(
       \sqrt{\log T}\cdot T^{\beta-2a}
              +\frac{T^{\beta-1-a/2}}{\log T}\right).
\end{align}
This calculation retains the temperature variation within each numerical step.

We next bound $\nu_mP_{\tau_m,T}f$. Since
$0\leq f(x,v,z)\leq\mathbf 1_{\{U(x)>\delta-\zeta\}}$, Pinsker's inequality and
the bound
$\KL(\nu_mP_{\tau_m,T}\mid\pi_{\eps_T})
\leq\eps_T\cH_{\eps_T}(\nu_mP_{\tau_m,T})/A_0$, which follows from
\eqref{eq:functional} and $\cI_M\geq0$, give
\[
 \nu_mP_{\tau_m,T}f
 \leq\pi_{\eps_T}(U>\delta-\zeta)
      +\sqrt{\frac{\eps_T}{2A_0}\cH_{\eps_T}(\nu_mP_{\tau_m,T})}.
\]
For any small $\zeta'>0$, the Gibbs tail estimate \eqref{eq:gibbstail},
applied with $(\delta-\zeta,\zeta'/2)$ in place of $(\delta,\eta)$, and the
first relation in \eqref{eq:coolingassumption} give, for all large $T$,
\[
 \pi_{\eps_T}(U>\delta-\zeta)\leq CT^{-(\delta-\zeta-\zeta')/E}.
\]
For the second term, we insert \eqref{eq:restart} and use
$\sqrt{x+y}\leq\sqrt x+\sqrt y$ and $\eps_T\leq\eps_0$:
\begin{equation}\label{eq:stretchedexponential}
 \sqrt{\frac{\eps_T}{2A_0}\cH_{\eps_T}(\nu_mP_{\tau_m,T})}
 \leq CT^{5a/2}\log T\,e^{-cT^{\beta-\vartheta-\eta}/2}
      +C_\eta T^{-(1-\vartheta)/2+\eta}.
\end{equation}
The first term on the right-hand side of \eqref{eq:stretchedexponential} is
the stretched-exponential term; it comes from the square root of the first
term on the right-hand side of \eqref{eq:restart}.
Since $\beta-\vartheta-\eta>0$, for
every fixed $N$ there is $C_N<\infty$ such that
\[
 CT^{5a/2}\log T\,e^{-cT^{\beta-\vartheta-\eta}/2}\leq C_NT^{-N},
 \qquad T\geq1.
\]
Consequently,
\[
 \nu_mP_{\tau_m,T}f
 \leq C T^{-(\delta-\zeta-\zeta')/E}
       +C_\eta T^{-(1-\vartheta)/2+\eta}
       +C_NT^{-N}.
\]
Since $\mathbb P(U(X_k)>\delta)\leq\nu_kf$, combine
this with \eqref{eq:telescoping} to obtain
\begin{align}\label{eq:fourratebound}
 \mathbb P(U(X_k)>\delta)
 \leq C\bigg(&T^{-(\delta-\zeta-\zeta')/E}
 +T^{-(1-\vartheta)/2+\eta}
 +\sqrt{\log T}\,T^{-(2a-\beta)}\notag\\
 &+(\log T)^{-1}T^{-(1+a/2-\beta)}+T^{-N}\bigg).
\end{align}
The four polynomial exponents in \eqref{eq:fourratebound} before the
parameter limits
are
\[
 \frac{\delta-\zeta-\zeta'}{E},\qquad
 \frac{1-\vartheta}{2}-\eta,\qquad
 2a-\beta,\qquad 1+\frac a2-\beta.
\]
Letting $\beta\downarrow\vartheta$ and
$\zeta,\zeta',\eta\downarrow0$ shows that the first three converge to the
three exponents in \eqref{eq:discreterate}. The fourth does not restrict the
rate because
\[
 1+\frac a2-\vartheta-\frac{1-\vartheta}{2}
 =\frac{1+a-\vartheta}{2}>0.
\]
The factors involving $\log T$ and the parameter differences are absorbed by
$T^\alpha$. This proves \eqref{eq:discreterate}.
The condition $a>\vartheta/2$ makes every relevant exponent positive.
The iteration statement follows from the grid asymptotics.
\end{proof}

\section{Numerical Illustration}\label{sec:numerics}

In this section, we conduct numerical experiments to compare overdamped Langevin dynamics (OLD) \eqref{eqn:OLD}, 
underdamped Langevin dynamics (ULD) \eqref{eqn:ULD}, 
together with the third-order Langevin
dynamic \eqref{eq:model}. Thus every frozen process has position marginal
proportional to $e^{-U/\eps}$. We fix OLD mobility and the kinetic mass at one;
this choice defines the reference physical clock, and any other constant OLD
mobility corresponds to a rescaling of that clock. The remaining kinetic
parameters are selected on validation paths and then held fixed on independent
evaluation paths, since finite-time annealing performance can change substantially
with friction.
The fixed-temperature distinction is already provided by the sharp
Eyring--Kramers theory: Wang and Zhu \cite{WangZhu2026} show that, in an explicit
matched-parameter regime, the third-order transition-time prefactor is strictly
smaller than its underdamped counterpart. We use this fixed-temperature result as
the theoretical benchmark and focus the experiment on whether the advantage
persists under the genuinely time-inhomogeneous cooling schedule proved in this
paper. 

For OLD we use Euler--Maruyama. For ULD we use the UBU splitting
\eqref{eq:ubustep}, with numerical friction denoted by $\eta$ and step
temperature $\eps_k=\eps_{\tau_k}$. Here $\eta$ is the numerical value of
$\gamma_{\rm u}$ in Appendix~\ref{sec:ubuappendix}, rather than the small
exponent-loss parameter in the proofs. We choose
UBU to align the numerical
treatment of the two kinetic dynamics: like the midpoint third-order scheme,
it uses one interior gradient evaluation per step and exactly resolves its
linear Gaussian subdynamics. This controls for gradient budget and force
quadrature, so the comparison more directly reflects the underlying dynamics.
For third-order Langevin dynamics, we use Algorithm~1 of Mou et al.\
\cite{MouEtAl2021}, namely the Gaussian endpoint \eqref{eq:moustep} with the
midpoint force integral \eqref{eq:midpoint}. The strong-order-two local
structure of UBU is discussed in Lemma~\ref{lem:ubulocal}; see
\cite{SanzSernaZygalakis2021,PaulinWhalley2024}.

We tune the parameters over the following validation grids:
\[
 \eta\in\{0.1,0.25,0.5,1,2,4\},\qquad
 \lambda\in\{0.25,0.5,1,1.5,2\},\qquad
 \gamma\in\{0.25,0.5,1,2,4,8\}.
\]
The score is terminal success at physical time $300$, with ever-reached success
and restricted mean hitting time as tie-breakers. Figure~\ref{fig:tuning} records
the scores on $1{,}200$ validation paths; the evaluation uses independent paths.

\begin{figure}[H]
\centering
\includegraphics[width=\textwidth]{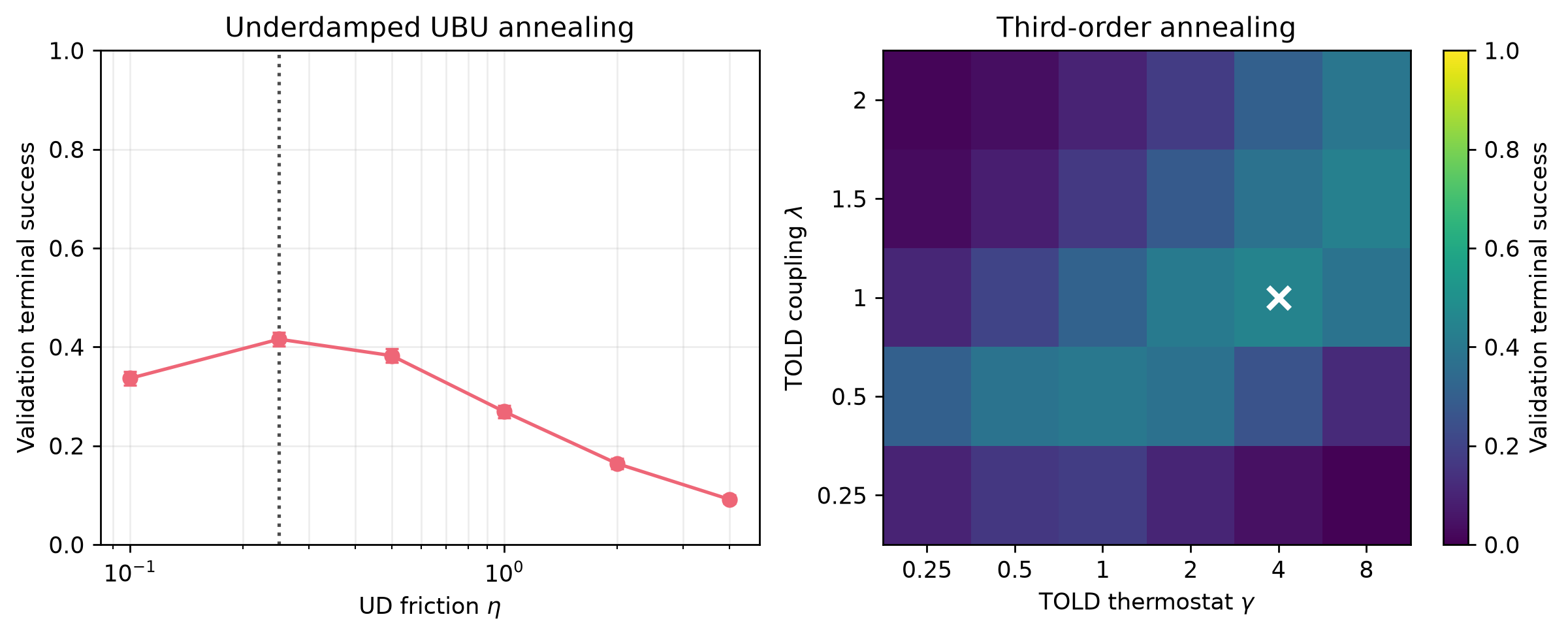}
\caption{Annealing parameter tuning on the validation set. Left: UBU terminal
success as a function of ULD friction. Right: the third-order product grid; the
cross marks the selected pair. Evaluation uses independent paths.}
\label{fig:tuning}
\end{figure}

\subsection{Annealing with the theory-driven cooling schedule}\label{sec:toy:example}

To test annealing under decreasing temperature, we consider a toy example in this section
by using the smooth confining
tilted double well
\begin{equation}\label{eq:numericalpotential}
 U(x):=\frac{x^2}{20}-0.65e^{-(x+2)^2}-0.45e^{-(x-2)^2}.
\end{equation}
To match the normalization $\min U=0$ in
Assumption~\ref{ass:potential}, the theoretical potential for this experiment
is understood to be $U-U(x_\star)$. We retain the unshifted values below for
readability. This additive shift changes neither the gradient and the dynamics
nor the normalized Gibbs law, objective gaps, or critical depths.
Its global minimum is at $x_\star=-1.854300$, its nonglobal minimum is at
$x_{\rm loc}=1.792080$, and the intervening saddle is at $x_s=0.082433$.
Their objective values are $-0.464426$, $-0.270384$, and $-0.019547$,
respectively. Hence the one-dimensional landscape has critical depth
\[
 D_{\rm crit}=U(x_s)-U(x_{\rm loc})=0.250836.
\]
We take the canonical schedule dictated by Theorem~\ref{thm:main},
\[
 \eps_t=\frac{E}{\log(t+e)},\qquad E=0.28>D_{\rm crit},
\]
and start every path at $x_{\rm loc}$ with zero auxiliary variables. The target
$G_{0.08}=\{x:U(x)-U(x_\star)\leq0.08\}$ excludes the local well, whose gap is
$0.194042$. At $T=300$ the temperature has fallen to $0.049013$.

Validation selects $\eta=0.25$ for ULD and $(\lambda,\gamma)=(1,4)$ for third
order. We then use $4{,}000$ new paths per method and record terminal success,
ever-reached success, the restricted mean first-passage time, the terminal
objective gap, and completed left--right crossings. A crossing is counted only
after the path moves from $x>1$ to $x<-1$, or conversely, to avoid rapid
recrossings near the saddle.

The first protocol holds the physical-time discretization fixed: all methods use
$h_k=0.05e^{1/2}(\tau_k+e)^{-1/2}$ and the same horizon $T=300$. The second
protocol holds the computational budget at $40{,}000$ new gradient evaluations.
Its grids use $h_k=0.05e^a(\tau_k+e)^{-a}$, so $h_0=0.05$ for every method.
OLD and UBU use the shared baseline exponent $1/2$, while the third-order
method uses exponent
$a_\star=(1+D_{\rm crit}/E)/4=0.473961$, the smallest exponent in
Theorem~\ref{thm:discrete} that retains the continuous-time rate. This tests the
iteration-count mechanism suggested by Remark~\ref{rem:stepsizecomparison};
because it changes both the attained physical time and terminal temperature, it
evaluates complete method--grid pairs, incorporating both the attained physical
time and terminal temperature. For these parameters, the sufficient convergence
condition in Theorem~\ref{thm:ubu} is
$a>2D_{\rm crit}/(3E)=0.597229$, and the rate-preserving threshold is
$a_{\rm u,*}=(1+D_{\rm crit}/E)/3=0.631948$. Thus the shared baseline
$a=1/2$ lies outside the asymptotic coverage of the present UBU theorem and
gives UBU a slower-decaying step schedule than its sufficient threshold requires.
At the equal gradient budget, UBU reaches $T=287.762$, while the third-order
grid reaches $T=309.029$.

\begin{table}[H]
\centering
\caption{Theory-guided annealing on \eqref{eq:numericalpotential}, with standard
errors in parentheses. ``Ever'' means that $G_{0.08}$ was reached by time $T$,
and RMST is $\E[\min\{\tau,T\}]$. Each row uses $4{,}000$ evaluation paths.}
\label{tab:annealing}
\scriptsize
\resizebox{\textwidth}{!}{%
\begin{tabular}{@{}llccccccc@{}}
\toprule
Protocol & Method & $N_\nabla$ & $T$ & Terminal & Ever
& RMST & $\E[U(X_T)-U_\star]$ & Mean crossings \\
\midrule
Common horizon & OLD & $42{,}557$ & $300.000$ & $0.324\;(0.007)$
& $0.356\;(0.008)$ & $216.9\;(1.9)$ & $0.1540\;(0.0016)$ & $0.371\;(0.008)$ \\
& ULD (UBU) & $42{,}557$ & $300.000$ & $0.402\;(0.008)$
& $0.459\;(0.008)$ & $195.1\;(2.0)$ & $0.1364\;(0.0017)$ & $0.492\;(0.009)$ \\
& Third order & $42{,}557$ & $300.000$ & $0.421\;(0.008)$
& $0.474\;(0.008)$ & $191.5\;(2.0)$ & $0.1334\;(0.0017)$ & $0.505\;(0.009)$ \\
\addlinespace
Equal budget & OLD & $40{,}000$ & $287.762$ & $0.326\;(0.007)$
& $0.356\;(0.008)$ & $209.3\;(1.8)$ & $0.1547\;(0.0016)$ & $0.379\;(0.008)$ \\
& ULD (UBU) & $40{,}000$ & $287.762$ & $0.405\;(0.008)$
& $0.457\;(0.008)$ & $186.5\;(1.9)$ & $0.1363\;(0.0017)$ & $0.489\;(0.009)$ \\
& Third order & $40{,}000$ & $309.029$ & $0.415\;(0.008)$
& $0.469\;(0.008)$ & $199.7\;(2.0)$ & $0.1342\;(0.0017)$ & $0.496\;(0.009)$ \\
\bottomrule
\end{tabular}%
}
\end{table}

\begin{figure}[H]
\centering
\includegraphics[width=\textwidth]{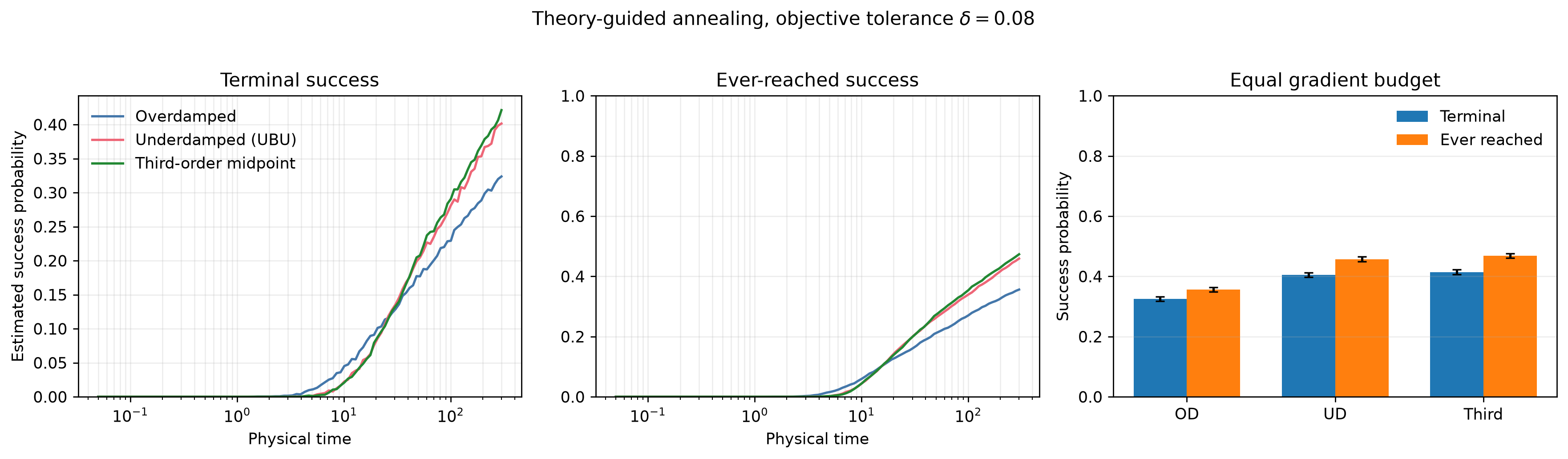}
\caption{Theory-guided annealing with tolerance $0.08$. Left and center: success
curves under the common-horizon protocol. Right: terminal and ever-reached
success at an equal gradient budget. Error bars in the right panel are one
Monte Carlo standard error.}
\label{fig:annealing}
\end{figure}

At the common physical horizon, all reported point estimates favor the tuned
third-order process over tuned UBU: terminal success is $0.421$ versus $0.402$,
ever-reached success is $0.474$ versus $0.459$, restricted mean hitting time is
$191.5$ versus $195.1$, and the mean terminal gap is $0.1334$ versus $0.1364$.
The terminal-success difference (third-order Langevin method minus UBU) is $0.0198$ with an
independent-sample standard error $0.0110$, giving a 95\% normal interval
approximately $[-0.002,0.041]$. At the equal gradient budget, third order again
has the higher terminal and ever-reached point estimates, $0.415$ versus $0.405$
and $0.469$ versus $0.457$, respectively; the terminal-success difference is
$0.0098$ with standard error $0.0110$. These intervals quantify the current
Monte Carlo uncertainty in the kinetic ranking. Moreover, the equal-budget
third-order grid reaches a later physical time and lower terminal temperature,
so that protocol compares complete method--grid pairs. Both kinetic methods
outperform OLD for terminal and first-passage criteria in this parameter regime.

As a discretization check, halving the initial step and using $1{,}000$ additional
paths at the common horizon gives terminal-success estimates $0.308$, $0.423$,
and $0.412$ for OLD, UBU, and third-order Langevin method, and ever-reached estimates $0.352$,
$0.481$, and $0.459$. This smaller independent sample gives higher UBU point
estimates, with the differences covered by its larger Monte Carlo uncertainty.

\subsection{A neural-network example with synthetic data}\label{sec:NN:synthetic}

We next consider a high-dimensional nonconvex objective given by the
regularized empirical risk of a two-layer neural network. Let
$\widetilde x_i=(x_i,1)\in\R^{21}$ and
\begin{equation}\label{eq:nnpotential}
 f_\theta(x_i):=\frac{A}{\sqrt m}\sum_{j=1}^m
 \tanh(b_j)\tanh\left(w_j^\top\widetilde x_i\right),\qquad
 U(\theta):=\frac{1}{2n}\sum_{i=1}^n(f_\theta(x_i)-y_i)^2
 +\frac{\mu}{2}\|\theta\|^2.
\end{equation}
We take input dimension $20$, $n=32$, $m=4$, $A=2$, and $\mu=10^{-3}$,
so $\theta=(w_1,\ldots,w_m,b_1,\ldots,b_m)\in\R^{88}$.  The targets are
generated once from a fixed $12$-unit teacher network, with
$x_i\sim\mathcal N(0,I_{20}/20)$ and data seed $11$.  Both layers are
trainable; the parametrization $\tanh(b_j)$ merely bounds the effective output
weights.  The risk is nonconvex, while the bounded activation and output
parametrization make its nonquadratic derivatives bounded.  Together with the
quadratic regularizer, this also gives the smoothness and dissipativity
structure assumed in our analysis.

A preliminary search from $96$ random starts identifies three recurrent
local-minimum levels, with
objective values $0.012778835$, $0.012810789$, and $0.012842665$, reached by
$39$, $29$, and $27$ starts, respectively.  All annealing paths begin at a
representative of the highest of these three levels.  We then apply the same
deterministic L-BFGS-B ``quench'' to every stochastic endpoint. The lowest
level found by the preliminary search defines the reference basin, and a run
is classified as successful when its quenched value is within $10^{-6}$ of
that level.

On $96$ separate validation paths per method, a common grid
$E\in\{0.004,0.008,0.012,0.016,0.024\}$ selects $E=0.024$ by the
method-symmetric rule of making pooled best-basin success closest to $0.4$.
We then tune the kinetic parameters specifically for this neural-network
problem.  A first stage evaluates
\[
 \eta\in\{0.1,0.25,0.5,1,2,4\},\qquad
 \lambda\in\{0.25,0.5,1,1.5,2\},\qquad
 \gamma\in\{0.25,0.5,1,2,4,8\}
\]
on $64$ validation paths per candidate, using common random numbers within
each method.  The three best UBU and third-order candidates are then compared
on $192$ new validation paths per candidate.  Maximizing confirmed best-basin
success, with the mean quenched objective as tie-breaker, selects
$\eta=0.1$ and $(\lambda,\gamma)=(0.5,4)$.
The held-out experiment uses $768$ paths per method, the shared schedule
$\eps_t=0.024/\log(t+e)$, the common grid
$h_k=0.05e^{1/2}(\tau_k+e)^{-1/2}$, and horizon $T=50$.  Hence every method
uses $3{,}059$ new gradient evaluations.

\begin{table}[H]
\centering
\caption{Held-out annealing results for the $88$-dimensional neural-network
potential \eqref{eq:nnpotential}.  Parentheses contain one Monte Carlo standard
error.  The objective gap is measured from the best level found in the
preliminary multistart search and is reported after deterministic quenching.}
\label{tab:nnannealing}
\begin{tabular}{@{}l c @{\hspace{2em}} c@{}}
\toprule
Method & Best-basin probability & Mean quenched gap $(\times10^{-3})$ \\
\midrule
OLD & $0.243\;(0.015)$ & $0.0445\;(0.0016)$ \\
ULD (UBU) & $0.388\;(0.018)$ & $0.0352\;(0.0023)$ \\
Third-order LD & $0.418\;(0.018)$ & $0.0377\;(0.0027)$ \\
\bottomrule
\end{tabular}
\end{table}

\begin{figure}[H]
\centering
\includegraphics[width=\textwidth]{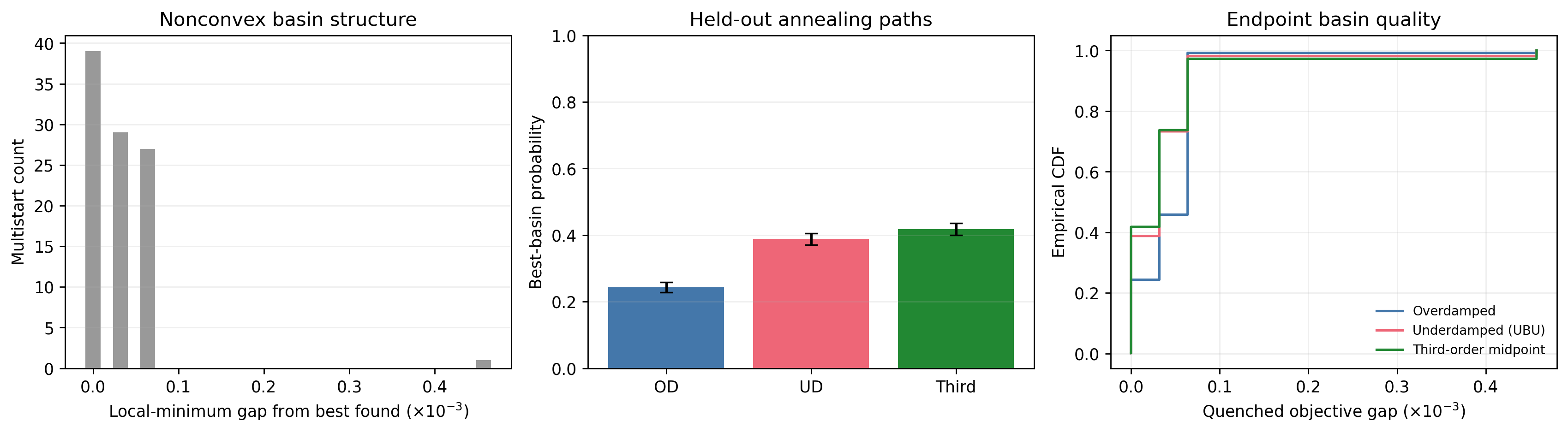}
\caption{Neural-network experiment.  Left: local
minimum levels identified by the preliminary multistart search.  Center:
held-out best-basin probabilities, with one-standard-error bars.  Right:
empirical distributions of the quenched endpoint objective gaps; curves farther
to the upper left are better.}
\label{fig:nnannealing}
\end{figure}

As a same-random-number sensitivity analysis, the parameters inherited from the
one-dimensional experiment, $\eta=0.25$ and $(\lambda,\gamma)=(1,4)$, give
best-basin probabilities $0.383$ for UBU and $0.378$ for third-order Langevin method on the
held-out noise streams.  The network-specific tuning therefore changes the
corresponding point estimates by $0.005$ and $0.040$, respectively.

Relative to OLD, the best-basin improvements are $0.145$ for UBU and $0.174$
for the third-order LD method, with independent-sample 95\% normal intervals
$[0.099,0.190]$ and $[0.128,0.221]$, respectively. The third-order best-basin
probability exceeds the UBU point estimate by $0.030$, with a 95\% normal
interval of $[-0.019,0.079]$. Halving the initial step and using $192$
new paths per method gives best-basin estimates $0.255$, $0.380$, and $0.427$
for OLD, UBU, and third-order LD method.

\subsection{A neural-network example with real data}\label{sec:NN:real}

We next retain the nonconvex neural-network potential in Section~\ref{sec:NN:synthetic} while replacing the
synthetic observations by the Connectionist Bench (Sonar, Mines versus Rocks)
data from the UCI Machine Learning Repository \cite{GormanSejnowski1988Sonar}.
The data contain $208$ observations with $60$ real-valued features and binary
rock/mine labels.  A fixed stratified split (seed $151$) assigns $160$
observations to training and $48$ to testing.  Each feature is standardized
using only the training sample and the resulting vectors are divided by
$\sqrt{60}$; labels are encoded as $-1$ and $1$.

We reuse the objective in \eqref{eq:nnpotential}, now with
$\widetilde x_i\in\R^{61}$, $n=160$, $m=4$, $A=2$, and $\mu=10^{-3}$.
Consequently, $\theta\in\R^{248}$.  The bounded tanh parametrization and
quadratic regularizer retain the smoothness and dissipativity structure, while
the empirical risk remains nonconvex.  A deterministic search from $64$ random
starts finds $39$ distinct local-minimum levels at tolerance $10^{-6}$; the
best level occurs from $12$ starts. Every annealing path begins at a recurrent
higher level. As above, the deterministic endpoint quench assigns each path
to a local-minimum basin. The lowest value found by the multistart search is
the reference level, and success means a quenched objective within $10^{-4}$
of that value.

Temperature and kinetic parameters are selected on validation paths, with
independent paths reserved for final evaluation.
On $24$ validation paths per method, the method-symmetric rule of pooled
best-basin success closest to $0.35$ selects $E=0.24$ from
$\{0.08,0.12,0.16,0.20,0.24,0.30,0.40\}$.
A first tuning stage uses $32$ paths per candidate over
$\eta\in\{0.05,0.1,0.25,0.5,1\}$ and
$(\lambda,\gamma)\in\{0.25,0.5,1\}\times\{2,4,8\}$; the two finalists for
each kinetic method are compared on $96$ fresh paths.  This selects
$\eta=0.05$ and $(\lambda,\gamma)=(1,4)$.  The held-out evaluation uses $256$
new paths per method, $\eps_t=0.24/\log(t+e)$,
$h_k=0.05e^{1/2}(\tau_k+e)^{-1/2}$, and horizon $T=20$, corresponding to $839$
gradient evaluations per path.

\begin{table}[H]
\centering
\caption{Held-out results for the $248$-dimensional UCI Sonar potential.
Parentheses contain one Monte Carlo standard error across independent
annealing paths.  Test-accuracy errors are conditional on the fixed $48$-record
test split.}
\label{tab:sonarannealing}
\begin{tabular}{@{}l c c c@{}}
\toprule
Method & Best-basin probability & Mean quenched objective & Test accuracy \\
\midrule
OLD & $0.180\;(0.024)$ & $0.10671\;(0.00035)$ & $0.824\;(0.003)$ \\
ULD (UBU) & $0.277\;(0.028)$ & $0.10427\;(0.00033)$ & $0.834\;(0.003)$ \\
Third-order LD & $0.352\;(0.030)$ & $0.10357\;(0.00034)$ & $0.843\;(0.003)$ \\
\bottomrule
\end{tabular}
\end{table}

\begin{figure}[H]
\centering
\includegraphics[width=\textwidth]{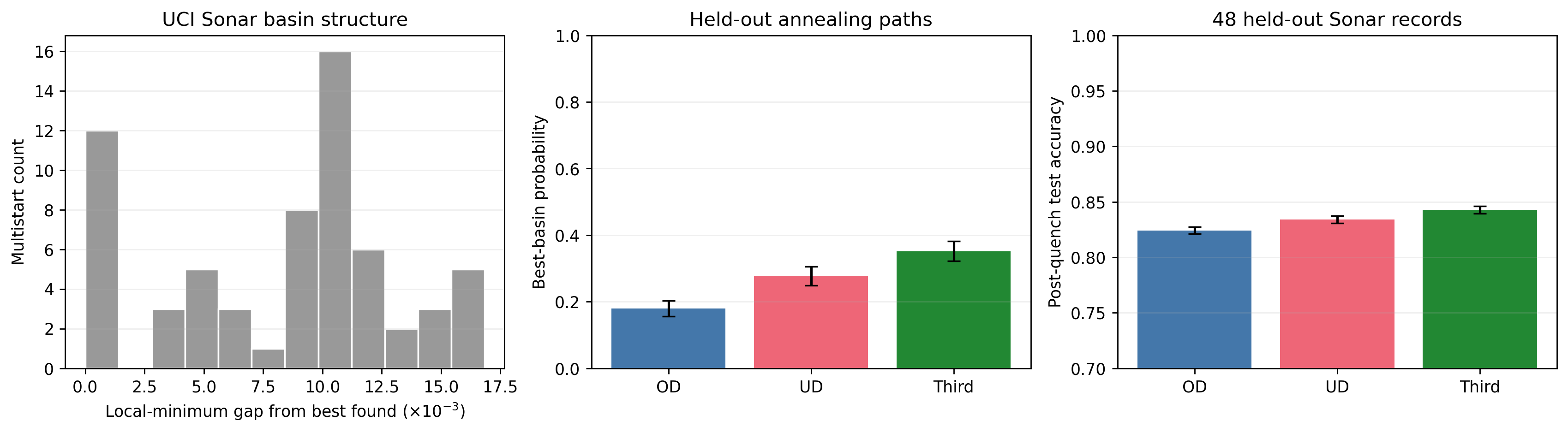}
\caption{UCI Sonar experiment.  Left: local-minimum gaps found by the
$64$-start deterministic search.  Center: held-out best-basin probabilities.
Right: mean post-quench accuracy on the fixed test split.  Error bars in the
center and right panels are one Monte Carlo standard error across annealing
paths.}
\label{fig:sonarannealing}
\end{figure}

Relative to OLD, the UBU and third-order best-basin improvements are $0.098$ and
$0.172$, respectively.  The third-order point estimate is $0.074$ higher than
UBU.  The corresponding post-quench test-accuracy point estimates are
$0.824$, $0.834$, and $0.843$, again with the third-order LD method highest.  With the initial step
halved and $64$ fresh paths per method, best-basin estimates are $0.266$,
$0.281$, and $0.344$, while test accuracies are $0.835$, $0.830$, and $0.844$.
Thus, the third-order LD method remains the highest point estimate under step refinement.


\section{Conclusion}\label{sec:conclusion}

We established global convergence guarantees of third-order Langevin dynamics for non-convex optimization
via simulated annealing with fixed friction and decreasing noise.  A three-block distorted entropy transfers
dissipation through the full chain, and an initial-density approximation covers
nonsmooth initial laws.  Logarithmic cooling then gives convergence in
probability at the barrier-controlled kinetic rate.  For the exact-force and
midpoint three-stage schemes, a cubic local endpoint estimate yields a less
restrictive sufficient step-size condition and a better sufficient iteration
exponent than both the existing kinetic frozen-force result and the UBU bound
proved by the same strong-coupling method.
The experiments complement these rate comparisons.  The third-order Langevin method
has higher terminal-success point estimates than UBU in the theory-guided
double well.  On a high-dimensional synthetic neural-network objective using synthetic data,
independent two-stage tuning and held-out evaluation show that both kinetic
methods outperform OLD in best-basin recovery, while the third-order Langevin
method has a higher point estimate than UBU; both improvements over OLD persist
after step refinement.
Using the real data, the same best-basin ordering is observed,
and the third-order Langevin method also gives the highest post-quench test-accuracy
point estimate. These qualitative conclusions persist under step refinement.

\bibliographystyle{alpha}
\bibliography{bibtex}

\appendix

\section{Underdamped UBU Comparison}\label{sec:ubuappendix}

This appendix provides the UBU result summarized in
Table~\ref{tab:introdiscretecomparison} and used in the comparison in
Remark~\ref{rem:stepsizecomparison}.

\subsection{Scheme and convergence result}

We now define the underdamped comparator used in
Table~\ref{tab:introdiscretecomparison} and Section~\ref{sec:numerics}.
In particular, let us define the UBU scheme $(\bar{X}_{k},\bar{V}_{k})_{k\geq 0}$ for ULD.
Fix a friction parameter $\gamma_{\rm u}>0$ and let
$P^{\rm u}_{t,T}$ denote the transition operator of the time-inhomogeneous ULD:
\begin{equation}\label{eq:uldannealed}
 \dd X_t=V_t\dd t,\qquad
 \dd V_t=\left(-\nabla U(X_t)-\gamma_{\rm u}V_t\right)\dd t
       +\sqrt{2\gamma_{\rm u}\eps_t}\,\dd B_t.
\end{equation}
At frozen temperature $\eps$, its invariant law is
\begin{equation}\label{eq:uldgibbs}
 \pi^{\rm u}_\eps(\dd x\,\dd v)
 \propto \exp\left[-\frac{U(x)+|v|^2/2}{\eps}\right]\dd x\,\dd v.
\end{equation}
For $r>0$, let $\mathcal U_r^\eps$ be the exact Gaussian flow over time $r$ of
the force-free part of \eqref{eq:uldannealed}. Writing
$q_r:=e^{-\gamma_{\rm u}r}$ and $F_r:=(1-q_r)/\gamma_{\rm u}$, its mean is
$(x+F_rv,q_rv)$ and its covariance is
\begin{equation}\label{eq:ubuflowcov}
 \eps\begin{pmatrix}
 \displaystyle \frac{2r}{\gamma_{\rm u}}
  -\frac{4(1-q_r)-(1-q_r^2)}{\gamma_{\rm u}^2}
 &\displaystyle \frac{(1-q_r)^2}{\gamma_{\rm u}}\\[3pt]
 \displaystyle \frac{(1-q_r)^2}{\gamma_{\rm u}}
 &1-q_r^2
 \end{pmatrix}\otimes\Id_d.
\end{equation}
Given $\bar S_k=(\bar{X}_{k},\bar{V}_{k})=(x,v)$, one frozen-temperature UBU step is
\begin{equation}\label{eq:ubustep}
 (y,w)=\mathcal U_{h_k/2}^{\eps_{\tau_k}}(x,v),\qquad
 w^+=w-h_k\nabla U(y),\qquad
 \bar S_{k+1}=\mathcal U_{h_k/2}^{\eps_{\tau_k}}\left(y,w^+\right),
\end{equation}
where the two half steps use independent Brownian increments. This is the
one-gradient symmetric splitting of Sanz-Serna and Zygalakis
\cite[Example~8]{SanzSernaZygalakis2021}. Its local-error analysis must retain
the centered stochastic term of order $h^{5/2}$; see also the correction
\cite{PaulinWhalley2024}.

The following theorem turns this centered local-error structure into a
quantitative annealing rate and step-size condition for the UBU scheme.

\begin{theorem}[Underdamped annealing with UBU]\label{thm:ubu}
Assume Assumptions~\ref{ass:potential}, \ref{ass:lsi}, and
\ref{ass:discrete}. For the UBU scheme \eqref{eq:ubustep}, take
\[
 h_k=c_hs_k^{-a},\qquad c_h>0,\quad a>0,
\]
with $c_h$ sufficiently small, and let the initial numerical law have a finite
fourth moment. Write $\vartheta=D/E$. If $a>2\vartheta/3$, then for every
$\delta,\alpha>0$ and all sufficiently large $k$,
\begin{equation}\label{eq:uburate}
 \mathbb P\left(U(\bar X_k)>\delta\right)
 \leq C_{\delta,\alpha}s_k^{-r_{\rm u}(\delta,a)+\alpha},
 \qquad
 r_{\rm u}(\delta,a)
 :=\min\left\{\frac\delta E,\frac{1-\vartheta}{2},
                         \frac{3a}{2}-\vartheta\right\}.
\end{equation}
In particular, $a\geq(1+\vartheta)/3$ retains the continuous-time exponent.
Moreover $s_k\asymp k^{1/(1+a)}$, so the corresponding iteration exponent is
$r_{\rm u}(\delta,a)/(1+a)$, up to an arbitrarily small loss.
\end{theorem}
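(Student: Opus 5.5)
The plan is to rerun the proof of Theorem~\ref{thm:discrete} for the two-variable chain, changing only the local error estimate. There are four ingredients. The first is a continuous-time restart estimate for the annealed ULD \eqref{eq:uldannealed}. Here I would use the two-block distorted entropy of He, Tan, and Wu, namely $\cI_{M_{\rm u}}+A_{\rm u}\eps^{-1}\KL$ with a $2\times2$ certificate analogous to Lemma~\ref{lem:matrix}: the matrix $-J^{\rm u}_QM_{\rm u}-M_{\rm u}(J^{\rm u}_Q)^\top+A_{\rm u}\gamma_{\rm u}P_v\succeq\frac12\Id_{2d}$. The Lyapunov and moment argument carries over with the same functional $V_a$ after dropping $z$, as does the approximation Lemma~\ref{lem:closure}. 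Together these yield the analogue of Proposition~\ref{prop:restart} with the same $a_\eta(u)=c_\eta(u+t_0)^{-\vartheta-\eta}$ and source $(u+t_0)^{-1+\eta}$. The second is grid smoothing. One UBU step is Gaussian given the previous state, with covariance $\eps$ times a matrix of order $\operatorname{diag}(h^3,h)$, so $\lambda_{\min}\geq c\eps h^3$. The mixture/Jensen argument of Lemma~\ref{lem:gaussian} then gives $\cH_{\eps_{\tau_k}}(\mathcal L(\bar S_k))\leq Cs_k^{3a}(\log s_k)^2$, which is polynomial and therefore sufficient. The third is the ULD smoothing estimate. It follows Lemma~\ref{lem:smoothing} using a cubic interpolant $q$ with $q(0)=e_x$, $q'(0)=e_v$, $q(1)=q'(1)=0$. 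The control $c_e$ is then bounded by $C|e|(1+|V_u|)$, which gives the weighted Lipschitz bound with constant $C\sqrt{\log T}$. The fourth is the uniform fourth-moment bound, which comes from the frozen-process comparison exactly as in Lemma~\ref{lem:local}, using the coarse $O(h^2)$ error.

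The new ingredient, and the main obstacle, is the local error. A synchronous strong coupling between the UBU step and the exact ULD step gives error $O(h^2)$ in $L^2$, but the leading term is a centered stochastic term. Under the coupling, the UBU force is evaluated at $y=x+F_{h/2}v+\xi$, where $\xi$ is the half-step position noise of size $\sqrt\eps h^{3/2}$, whereas the exact force integrates $\nabla U(X_u)$. The difference $\nabla^2U(x)\bigl(h\xi-\int_0^h(\text{noise in }X_u)\dd u\bigr)$ has conditional mean zero and size $\sqrt\eps\,h^{5/2}$. The remaining deterministic and conditional-bias error, after expanding with $\|\nabla^3U\|_\infty<\infty$, is $O(h^3(1+|s|^2))$. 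I would therefore split the one-step discrepancy as
\[
\bar S^{\rm num}-S^{\rm ex}=\mathcal M_j+\mathcal R_j,\qquad
\E[\mathcal M_j\mid\bar S_j]=0,\quad \|\mathcal M_j\|_{L^2}\lesssim\sqrt{\eps}\,h^{5/2}(1+|s|),\quad \|\mathcal R_j\|_{L^2}\lesssim h^3(1+|s|^2),
\]
and add the temperature-variation term $h^{3/2}/(T\log^{3/2}T)$ as before.

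In the telescoping identity over the window of length $T^\beta$, the terms $\nu_j(Q^{\rm u}-P^{\rm u})g_j$ are bounded by the weighted Lipschitz estimate on $g_j$ applied to the full coupled discrepancy. This yields $\sqrt{\log T}\sum_j\bigl(\sqrt{\eps}h_j^{5/2}+h_j^3\bigr)\lesssim T^{\beta-3a/2+o(1)}$, since the $h^{5/2}$ term dominates. The centered structure is needed only to confirm that nothing worse than $h^{5/2}$ per step appears; the bound itself is the crude strong one, consistent with the claimed exponent $3a/2-\vartheta$. It is possible that the centering could give more via a second-order weak expansion of $g_j$, but I would not attempt this.

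The conclusion then repeats the argument of Theorem~\ref{thm:discrete}. Choose $\beta\in(\vartheta,\min\{1,3a/2\})$, which is nonempty exactly when $a>2\vartheta/3$. Restart the exact ULD at $\tau_m\approx T-T^\beta$. The restart entropy decays as $T^{3a}e^{-cT^{\beta-\vartheta-\eta}}+T^{-1+\vartheta+2\eta}$. Pinsker's inequality, the Gibbs tail bound \eqref{eq:gibbstail}, and the telescoping bound together give the exponents $\delta/E$, $(1-\vartheta)/2$, $3a/2-\beta$, and $1+a/2-\beta$, the last of which is harmless. Letting $\beta\downarrow\vartheta$ gives \eqref{eq:uburate}. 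At $a=(1+\vartheta)/3$ we have $3a/2-\vartheta=(1-\vartheta)/2$, and the relation $s_k\asymp k^{1/(1+a)}$ gives the iteration exponent.
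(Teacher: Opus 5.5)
Your proposal is correct and follows essentially the same route as the paper: a two-block distorted-entropy restart estimate, Gaussian grid smoothing giving $\cH^{\rm u}\leq Cs_k^{3a}(\log s_k)^2$, Girsanov smoothing of $P^{\rm u}_{\tau_{j+1},T}$, and a strong telescoping bound in which $\sqrt{\log T}\asymp\eps_T^{-1/2}$ cancels the $\sqrt{\eps}$ of the centered $\mathcal{O}(\sqrt\eps\,h^{5/2})$ local error, yielding $T^{\beta-3a/2}$. Two harmless imprecisions: a full UBU step is Gaussian only conditionally on the post-kick state, since the last half-flow supplies the independent Gaussian with covariance $\succeq c\eps\operatorname{diag}(h^3,h)\otimes\Id_d$ that the mixture argument needs; and for ULD the cubic-interpolant control is $q''+\nabla U(X_u+q)-\nabla U(X_u)+\gamma_{\rm u}q'$, which is bounded by $C|e|$ without your extra factor $(1+|V_u|)$.
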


\begin{proof}
The proof is given in Section~\ref{sec:proof_ubu}.
\end{proof}

Choosing the smallest step-decay exponent in Theorem~\ref{thm:ubu} that
preserves the continuous-time rate yields the iteration exponent in the following corollary.

\begin{corollary}[Rate-preserving UBU exponent]\label{cor:ubuiteration}
Under the assumptions of Theorem~\ref{thm:ubu}, let
\[
 r_{\rm c}(\delta):=\min\left\{\frac\delta E,
                    \frac{1-\vartheta}{2}\right\},
 \qquad a_{\rm u,*}:=\frac{1+\vartheta}{3}.
\]
Then the choice $a=a_{\rm u,*}$ gives
\[
 \mathbb P\left(U(\bar X_k)>\delta\right)
 \leq C_{\delta,\alpha}
 k^{-3r_{\rm c}(\delta)/(4+\vartheta)+\alpha}
\]
for all sufficiently large $k$.
\end{corollary}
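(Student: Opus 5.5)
This corollary follows from Theorem~\ref{thm:ubu} in the same way that Corollary~\ref{cor:iterationgain} follows from Theorem~\ref{thm:discrete}. The plan is to check that $a=a_{\rm u,*}$ is admissible and saturates the rate, and then convert physical time to iteration count.

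\emph{Admissibility.} Theorem~\ref{thm:ubu} requires $a>2\vartheta/3$. With $a_{\rm u,*}=(1+\vartheta)/3$, we have $a_{\rm u,*}-2\vartheta/3=(1-\vartheta)/3$, which is positive because $\vartheta=D/E<1$ by Assumption~\ref{ass:lsi}. So \eqref{eq:uburate} applies, and $c_h$ is kept small as the theorem demands.

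\emph{Rate saturation.} At $a=a_{\rm u,*}$ the discretization exponent is
\[
 \frac{3a_{\rm u,*}}{2}-\vartheta=\frac{1+\vartheta}{2}-\vartheta=\frac{1-\vartheta}{2}.
\]
Hence $r_{\rm u}(\delta,a_{\rm u,*})=\min\{\delta/E,(1-\vartheta)/2\}=r_{\rm c}(\delta)$. For every $\alpha'>0$ and all large $k$, this gives
\[
 \mathbb P(U(\bar X_k)>\delta)\leq C s_k^{-r_{\rm c}(\delta)+\alpha'}.
\]

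\emph{Time conversion.} The grid asymptotics recorded before Lemma~\ref{lem:local} come from the mean value theorem applied to $s_k^{1+a}$ with $h_k=c_hs_k^{-a}$. They use only the form of the step sequence, not the scheme, so they hold verbatim here and give $s_k^{1+a}\sim(1+a)c_hk$. At $a=a_{\rm u,*}$ we have $1+a_{\rm u,*}=(4+\vartheta)/3$, so $s_k\asymp k^{3/(4+\vartheta)}$. Substituting yields the bound
\[
 \mathbb P(U(\bar X_k)>\delta)\leq C\,k^{-3r_{\rm c}(\delta)/(4+\vartheta)+3\alpha'/(4+\vartheta)}.
\]
Choosing $\alpha'=(4+\vartheta)\alpha/3$ gives exactly the claimed exponent, and the asymptotic constants from $s_k\asymp k^{3/(4+\vartheta)}$ are absorbed into $C_{\delta,\alpha}$.

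No step is a real obstacle; everything rests on Theorem~\ref{thm:ubu}. The one point to check is the sign in the asymptotic comparison: an upper bound on $s_k^{-r+\alpha'}$ needs a lower bound $s_k\geq ck^{3/(4+\vartheta)}$. This holds because the exponent $-r_{\rm c}(\delta)+\alpha'$ is negative once $\alpha'$ is small, which we may assume without loss of generality.
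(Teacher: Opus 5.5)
Your proposal is correct and follows the paper's proof: substitute $a=a_{\rm u,*}$ into \eqref{eq:uburate}, which gives $3a/2-\vartheta=(1-\vartheta)/2$ and hence $r_{\rm u}(\delta,a_{\rm u,*})=r_{\rm c}(\delta)$; then convert time using $s_k\asymp k^{3/(4+\vartheta)}$ and absorb the small loss into $\alpha$. Your explicit check that $a_{\rm u,*}>2\vartheta/3$ (using $\vartheta<1$) is a small step the paper leaves implicit, and your sign caveat is harmless because $s_k\asymp k^{3/(4+\vartheta)}$ holds in both directions.
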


\begin{proof}
At $a=a_{\rm u,*}$,
$3a/2-\vartheta=(1-\vartheta)/2$, while
$s_k\asymp k^{1/(1+a)}=k^{3/(4+\vartheta)}$. Substitute these identities in
\eqref{eq:uburate} and absorb the arbitrarily small loss into $\alpha$.
\end{proof}

\subsection{Proof of the UBU comparison}\label{sec:proof_ubu}

We collect the underdamped analogues of the local comparison,
grid-regularization, restart, and smoothing estimates. Constants in this
subsection may depend on $d,U,\gamma_{\rm u}$, and $c_h$.

We begin with the one-step strong error and uniform moment control for the
UBU chain in the next lemma.

\begin{lemma}[UBU local error and moments]\label{lem:ubulocal}
Let $Q^{\rm UBU}_{t,h}$ be one step of \eqref{eq:ubustep}, with temperature
frozen at $\eps_t$, and couple it synchronously to \eqref{eq:uldannealed}
started from $s=(x,v)$. For $h\leq h_*$ and all sufficiently large $t$,
\begin{equation}\label{eq:ubulocal}
 \left\|\bar S_h^{\rm UBU}-\bar S_{t+h}^{\rm exact}\right\|_{L^2(\mathbb P_s)}
 \leq C\sqrt{\eps_t}\,h^{5/2}
      +Ch^3\left(1+|s|^2\right)
      +\frac{Ch^{3/2}}{(t+t_0)\log^{3/2}(t+t_0)}.
\end{equation}
The $\mathcal{O}\left(\sqrt{\eps_t}\,h^{5/2}\right)$ contribution is an It\^o integral with
conditional mean zero. The numerical chain satisfies
\begin{equation}\label{eq:ubumoments}
 \sup_{k\geq0}\E\left|\bar S_k\right|^p<\infty,
 \qquad 1\leq p\leq4.
\end{equation}
\end{lemma}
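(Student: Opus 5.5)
The plan mirrors the proof of Lemma~\ref{lem:local}: split the comparison into a frozen-temperature part and a temperature-variation part. Let $\bar S_u^{\rm frozen}$ be the solution of \eqref{eq:uldannealed} started from $s$ at time $t$, with the noise coefficient frozen at $\sqrt{2\gamma_{\rm u}\eps_t}$ and driven by the same Brownian path as the UBU step. For the temperature-variation part, the noise coefficients differ by at most $Cu/((t+t_0)\log^{3/2}(t+t_0))$, by the same computation of $|(\sqrt{\eps_t})'|$ from \eqref{eq:coolingassumption}. The It\^o isometry and Gr\"onwall's inequality with the Lipschitz drift then give the last term of \eqref{eq:ubulocal}, exactly as before.

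For the frozen part, write the UBU step in variation-of-constants form. The Gaussian flows $\mathcal U_{h/2}^{\eps}$ are the exact solutions of the linear force-free part. Both the UBU endpoint and the frozen exact endpoint are therefore obtained from the same linear propagator plus a force contribution. In the exact solution the force is $-\int_0^h e^{-\gamma_{\rm u}(h-r)}\nabla U(X_r)\,\dd r$ in the velocity, and correspondingly integrated in the position. UBU replaces this by the propagated impulse $-h\,e^{-\gamma_{\rm u}h/2}\nabla U(y)$, where $y$ is the position after the first half flow. The error then splits as follows.
\begin{enumerate}[label=(\roman*)]
\item A deterministic quadrature error: midpoint rule applied to a smooth integrand. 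The second-order Taylor remainder uses $\|\nabla^3U\|_\infty$ and $|v|^2$ and gives $Ch^3(1+|s|^2)$.
\item A stochastic part. Inside a step, $X_r-(\text{its deterministic linear part})$ contains the double integral $\sqrt{2\gamma_{\rm u}\eps_t}\int_0^r\int_0^{r'}e^{-\gamma_{\rm u}(r'-q)}\dd B_q\,\dd r'$, which has size $\sqrt{\eps_t}\,r^{3/2}$. After applying $\nabla^2U$ and integrating in $r$ against the midpoint difference, this gives a term of order $\sqrt{\eps_t}\,h^{5/2}$. Its leading part is $\nabla^2U(x)\int_0^h(\ldots)\,\dd B$, an It\^o integral with conditional mean zero; the next Taylor remainders are $O(h^3)$.
\end{enumerate}
The same difference also enters the position through one more integration, which only costs another factor $h$. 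Gr\"onwall's inequality with the Lipschitz drift absorbs the feedback of these residuals over $[0,h]$.

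The main obstacle is this stochastic bookkeeping. One must show that the non-centered contributions are genuinely $O(h^3)$, so that only the centered term sits at order $h^{5/2}$. This requires expanding $\nabla U(X_r)-\nabla U(y)$ to second order and checking that the Gaussian cross terms either have mean zero or carry an extra $h^{1/2}$ (products of two stochastic increments contribute $\eps_t h$ times $h^2$). I would isolate the leading It\^o integral explicitly and bound the remainder in $L^2$ using Gaussian moments and $\|\nabla^3U\|_\infty$.

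For the moments \eqref{eq:ubumoments}, I would copy the Lyapunov argument of Lemma~\ref{lem:local}, using the underdamped Lyapunov function $\mathcal V=1+H^{\rm u}+a\,x\cdot v+C$, which satisfies $L^{\rm u}_\eps\mathcal V^p\leq-c_p\mathcal V^p+C_p$. The coarse estimate $\|\bar S^{\rm UBU}_h-\bar S^{\rm frozen}_h\|_{L^q}\leq Ch^{3/2}(1+|s|)$ (noise error) together with $Ch^2(1+|s|)$ (force error) yields
\[
 |Q^{\rm UBU}\mathcal V^p-\E\mathcal V^p(\bar S^{\rm frozen}_h)|\leq C(h^{3/2}\sqrt{\eps_t}+h^2)\mathcal V^p.
\]
A coarse $h^{3/2}$ bound would spoil the recursion. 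To avoid that, I would use the conditional-mean-zero structure: the first-order Taylor term of $\mathcal V^p$ in the centered error has expectation zero after conditioning, which leaves $Ch^2\mathcal V^p$. The recursion \eqref{eq:discrete-lyapunov-recursion} and induction for $p=1,2$ then conclude.
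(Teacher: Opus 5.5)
Your frozen local-error argument is correct, but it takes a different route from the paper.

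The paper writes the velocity discrepancy as $\int_0^h\chi(r)\,\dd r-h\chi(h/2)$ with $\chi(r)=q_{h-r}\nabla U(X_r)$. It then applies It\^o's formula to the semimartingale $\dot\chi$ and combines a symmetric midpoint identity with stochastic Fubini on the triangular domain $\mathcal T_h$. This gives an exact splitting into $\int_{\mathcal T_h}a_\chi$ and the centered term $\int_0^h\kappa_h(r)\sigma_\chi(r)\,\dd B_r$, whose adapted random integrand is $\sqrt{2\gamma_{\rm u}\eps_t}\,\kappa_h(r)q_{h-r}\nabla^2U(X_r)$; no Taylor remainder appears.

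Your second-order expansion of $\nabla U$ about the initial point $x$ instead produces a Wiener integral $\nabla^2U(x)\sqrt{2\gamma_{\rm u}\eps_t}\int_0^hk_h(w)\,\dd B_w$, with a deterministic kernel satisfying $|k_h|\leq Ch^2$. This is more elementary and makes the centering obvious. It works only because the base point is $\mathcal F_0$-measurable. Expanding about $y=X^0_{h/2}$, as your last frozen-part paragraph suggests, would put the random matrix $\nabla^2U(y)$ in front of the stochastic term.

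You also do not need the Gaussian cross-term bookkeeping you call the main obstacle. Since $\|X_r-x\|_{L^4}+\|y-x\|_{L^4}\leq Ch(1+|s|)$, the Taylor remainder, bounded by $\frac12\|\nabla^3U\|_\infty|X_r-x|^2$, is already $O(h^3(1+|s|^2))$ after integration in $r$.

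Two minor corrections:
\begin{itemize}
\item No Gr\"onwall step is needed in the frozen comparison. In variation-of-constants form the error is exactly the quadrature difference, with no feedback.
\item In the position component the weight $F_{h-r}$ is not symmetric about $h/2$. The non-centered error there is therefore still $O(h^3)$, not a factor $h$ smaller; only the centered part improves, to $\sqrt{\eps_t}h^{7/2}$. Both are within \eqref{eq:ubulocal}.
\end{itemize}

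In the moment step you introduce an obstacle that is not there. Under the coupling you yourself describe, both endpoints are built from the same force-free Gaussian propagator, driven by consecutive pieces of one Brownian path. The noise is therefore reproduced exactly. So $\bar S^{\rm UBU}_h-S^{\rm frozen}_h$ consists only of the force-quadrature differences $\int_0^hF_{h-r}\nabla U(X_r)\,\dd r-hF_{h/2}\nabla U(Y)$ and $\int_0^hq_{h-r}\nabla U(X_r)\,\dd r-hq_{h/2}\nabla U(Y)$.

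Since $\|X_r-X_r^0\|_{L^q}\leq C_qh^2(1+|s|)$ and $\|X_r^0-Y\|_{L^q}\leq C_qh(1+|s|)$, these differences are $O(h^2(1+|s|))$ in every $L^q$; this is the paper's \eqref{eq:ubucoarselocal}. There is no $\sqrt{\eps_t}h^{3/2}$ noise error; your own local analysis puts the stochastic contribution at $\sqrt{\eps_t}h^{5/2}$. The mean-value and H\"older estimate therefore gives $C_ph^2\mathcal V_{\rm u}^p(s)$ directly, and the recursion closes as in Lemma~\ref{lem:local}.

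The centering detour is thus unnecessary, and as phrased it is also inaccurate. The first-order term $\nabla\mathcal V_{\rm u}^p(S^{\rm frozen}_h)\cdot\alpha$ in a centered error $\alpha$ need not have mean zero, because $S^{\rm frozen}_h$ is correlated with $\alpha$. You would have to freeze the gradient at $s$ and pay an extra factor $|S^{\rm frozen}_h-s|$.

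What genuinely matters, and what you do use, is linear growth in $|s|$. Plugging the sharp bound $Ch^3(1+|s|^2)$ into the Lyapunov step would give a term of order $h^3|s|^{2p+1}$, which is not dominated by $h^2\mathcal V_{\rm u}^p(s)$ uniformly in $s$.
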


\begin{proof}
First freeze the temperature at $\eps_t$. We give the conditional local
calculation because the estimates in the fixed-temperature UBU literature are
usually stated after averaging over an equilibrium law. Write
$G=\nabla U$, $\mathsf H_U=\nabla^2U$, and let $(X_r,V_r)$ be the exact frozen ULD
started from $s$. The globally Lipschitz drift and the bounded noise amplitude
give, for $p=2,4$,
\begin{equation}\label{eq:ubuconditionalmoments}
 \sup_{0\leq r\leq h_*}
 \left(\|X_r\|_{L^p(\mathbb P_s)}+\|V_r\|_{L^p(\mathbb P_s)}
       +\|G(X_r)\|_{L^p(\mathbb P_s)}\right)
 \leq C_p(1+|s|).
\end{equation}
Let $\left(X_r^0,V_r^0\right)$ be the force-free process driven by the same Brownian
path and put $Y=X_{h/2}^0$, the UBU force point. Variation of constants gives
\begin{equation}\label{eq:ubuforcepoint}
\begin{aligned}
 X_{h/2}-Y&=-\int_0^{h/2}F_{h/2-r}G(X_r)\dd r,\\
 \|X_{h/2}-Y\|_{L^p(\mathbb P_s)}
 &\leq C_ph^2(1+|s|).
\end{aligned}
\end{equation}
The exact and UBU velocity endpoints therefore differ by
\[
 \int_0^h q_{h-r}G(X_r)\dd r-hq_{h/2}G(Y).
\]
For $\chi(r)=q_{h-r}G(X_r)$, the symmetric midpoint identity reads
\begin{equation}\label{eq:ubumidpointidentity}
 \int_0^h\chi(r)\dd r-h\chi(h/2)
 =\int_{h/2}^h\dd u\int_{h/2}^u\dd v
       \int_{h-v}^{v}\dd\dot\chi(r).
\end{equation}
For smooth paths this identity follows from two applications of the
fundamental theorem of calculus. In the present setting,
$\dot\chi(r)=q_{h-r}\{\gamma_{\rm u}G(X_r)+\mathsf H_U(X_r)V_r\}$ is a continuous
semimartingale. Since $X$ has finite variation, It\^o's formula gives
\begin{equation}\label{eq:ubuchiito}
 \dd\dot\chi(r)=a_\chi(r)\dd r+\sigma_\chi(r)\dd B_r,
\end{equation}
with finite-variation and martingale integrands
\begin{align*}
 a_\chi(r)&:=q_{h-r}\left[
 \gamma_{\rm u}^2G(X_r)+\gamma_{\rm u}\mathsf H_U(X_r)V_r
 +\nabla^3U(X_r)[V_r,V_r]-\mathsf H_U(X_r)G(X_r)\right],\\
 \sigma_\chi(r)&:=\sqrt{2\gamma_{\rm u}\eps_t}\,q_{h-r}\mathsf H_U(X_r).
\end{align*}
Since $0<q_{h-r}\leq1$, $\|\mathsf H_U\|_{\rm op}\leq K$, and
$\|\nabla^3U\|_\infty<\infty$, the conditional moment bound
\eqref{eq:ubuconditionalmoments} with $p=2,4$ gives, for $0\leq r\leq h$,
\begin{equation}\label{eq:ubuchiintegrands}
\begin{aligned}
 \|a_\chi(r)\|_{L^2(\mathbb P_s)}
 &\leq C\left(\|G(X_r)\|_{L^2(\mathbb P_s)}+\|V_r\|_{L^2(\mathbb P_s)}
       +\|V_r\|_{L^4(\mathbb P_s)}^2\right)
 \leq C\left(1+|s|^2\right),\\
 \|\sigma_\chi(r)\|_{\rm F}^2
 &\leq2\gamma_{\rm u}\eps_t\,dK^2,
\end{aligned}
\end{equation}
where $\|\cdot\|_{\rm F}$ denotes the Frobenius norm. Let
\[
 \mathcal T_h:=\left\{(u,v,r):\ \frac h2\leq v\leq u\leq h,\
 h-v\leq r\leq v\right\},
 \qquad |\mathcal T_h|=\frac{h^3}{24},
\]
be the triangular integration domain in \eqref{eq:ubumidpointidentity}. By
\eqref{eq:ubuchiintegrands}, we have:
\begin{equation}\label{eq:ubufubiniconditions}
 \int_{\mathcal T_h}\E_s|a_\chi(r)|\,\dd(u,v,r)
 \leq\frac{h^3}{24}\,C\left(1+|s|^2\right),
 \qquad
 \int_{\mathcal T_h}\E_s\|\sigma_\chi(r)\|_{\rm F}^2\,\dd(u,v,r)
 \leq\frac{\gamma_{\rm u}\eps_t dK^2}{12}h^3.
\end{equation}
The first bound justifies Fubini's theorem for the finite-variation part,
and the second is the square-integrability hypothesis of the stochastic
Fubini theorem for the martingale part. For fixed $r\in[0,h]$, the
$(u,v)$-section of $\mathcal T_h$ has area
\[
 \kappa_h(r):=\frac12\min\{r,h-r\}^2.
\]
Therefore \eqref{eq:ubumidpointidentity} holds in $L^2(\mathbb P_s)$ in the form
\begin{equation}\label{eq:ubuvelocitysplit}
 \int_0^h\chi(r)\dd r-h\chi(h/2)
 =\int_{\mathcal T_h}a_\chi(r)\,\dd(u,v,r)
  +\int_0^h\kappa_h(r)\sigma_\chi(r)\dd B_r.
\end{equation}
Thus the stochastic part of the velocity error is
\[
 \alpha_h^v:=\int_0^h\kappa_h(r)\sigma_\chi(r)\dd B_r
 =\sqrt{2\gamma_{\rm u}\eps_t}
 \int_{h/2}^h\dd u\int_{h/2}^u\dd v
       \int_{h-v}^{v}q_{h-r}\mathsf H_U(X_r)\dd B_r.
\]
It is conditionally centered. It\^o's isometry,
\eqref{eq:ubuchiintegrands}, and $\int_0^h\kappa_h(r)^2\dd r=h^5/320$ yield
\[
 \|\alpha_h^v\|_{L^2(\mathbb P_s)}^2
 =\E_s\int_0^h\kappa_h(r)^2\|\sigma_\chi(r)\|_{\rm F}^2\dd r
 \leq\frac{\gamma_{\rm u}\eps_t dK^2}{160}h^5,
 \qquad\text{so}\qquad
 \|\alpha_h^v\|_{L^2(\mathbb P_s)}\leq C\sqrt{\eps_t}\,h^{5/2}.
\]
Since $\chi(h/2)=q_{h/2}G(X_{h/2})$, the remaining part of the velocity error is
\[
 \beta_h^v:=\int_{\mathcal T_h}a_\chi(r)\,\dd(u,v,r)
 +hq_{h/2}\{G(X_{h/2})-G(Y)\}.
\]
Minkowski's integral inequality, \eqref{eq:ubuchiintegrands}, the bounded
Hessian, and the force-point estimate \eqref{eq:ubuforcepoint} give
\[
 \|\beta_h^v\|_{L^2(\mathbb P_s)}
 \leq\frac{h^3}{24}\sup_{0\leq r\leq h}\|a_\chi(r)\|_{L^2(\mathbb P_s)}
 +Kh\|X_{h/2}-Y\|_{L^2(\mathbb P_s)}
 \leq Ch^3\left(1+|s|^2\right).
\]

The position endpoints have the analogous difference
\[
 \int_0^hF_{h-r}G(X_r)\dd r-hF_{h/2}G(Y).
\]
Apply \eqref{eq:ubumidpointidentity} to $\psi(r):=F_{h-r}G(X_r)$. Since
$\frac{\dd}{\dd r}F_{h-r}=-q_{h-r}$,
\[
 \dot\psi(r)=-q_{h-r}G(X_r)+F_{h-r}\mathsf H_U(X_r)V_r,
\]
and It\^o's formula gives
$\dd\dot\psi(r)=a_\psi(r)\dd r+\sigma_\psi(r)\dd B_r$, where
\begin{align*}
 a_\psi(r)&:=F_{h-r}\left[\nabla^3U(X_r)[V_r,V_r]
              -\mathsf H_U(X_r)G(X_r)\right]
   -(1+q_{h-r})\mathsf H_U(X_r)V_r-\gamma_{\rm u}q_{h-r}G(X_r),\\
 \sigma_\psi(r)&:=\sqrt{2\gamma_{\rm u}\eps_t}\,F_{h-r}\mathsf H_U(X_r).
\end{align*}
Since $0\leq F_{h-r}\leq h-r\leq h$, the argument leading to
\eqref{eq:ubuchiintegrands} gives
\[
 \sup_{0\leq r\leq h}\|a_\psi(r)\|_{L^2(\mathbb P_s)}\leq C\left(1+|s|^2\right),
 \qquad
 \|\sigma_\psi(r)\|_{\rm F}^2\leq2\gamma_{\rm u}\eps_t\,dK^2h^2.
\]
Hence the analogue of \eqref{eq:ubufubiniconditions} holds for
$(a_\psi,\sigma_\psi)$, and, as in \eqref{eq:ubuvelocitysplit},
\[
 \int_0^h\psi(r)\dd r-h\psi(h/2)
 =\int_{\mathcal T_h}a_\psi(r)\,\dd(u,v,r)
  +\int_0^h\kappa_h(r)\sigma_\psi(r)\dd B_r.
\]
The stochastic part
$\alpha_h^x:=\int_0^h\kappa_h(r)\sigma_\psi(r)\dd B_r$ is conditionally
centered, and It\^o's isometry gives
\[
 \|\alpha_h^x\|_{L^2(\mathbb P_s)}^2
 \leq2\gamma_{\rm u}\eps_t\,dK^2h^2\int_0^h\kappa_h(r)^2\dd r
 =\frac{\gamma_{\rm u}\eps_t dK^2}{160}h^7,
\]
so that
\[
 \|\alpha_h^x\|_{L^2(\mathbb P_s)}\leq C\sqrt{\eps_t}\,h^{7/2}.
\]
The residual part of the position error is
\[
 \beta_h^x:=\int_{\mathcal T_h}a_\psi(r)\,\dd(u,v,r)
 +hF_{h/2}\{G(X_{h/2})-G(Y)\}.
\]
As for $\beta_h^v$, now also using $F_{h/2}\leq h/2$,
\[
 \|\beta_h^x\|_{L^2(\mathbb P_s)}
 \leq\frac{h^3}{24}\sup_{0\leq r\leq h}\|a_\psi(r)\|_{L^2(\mathbb P_s)}
 +\frac{Kh^2}{2}\|X_{h/2}-Y\|_{L^2(\mathbb P_s)}
 \leq Ch^3\left(1+|s|^2\right).
\]
Consequently, the full frozen one-step error has the
decomposition
\begin{equation}\label{eq:ubulocaldecomposition}
 \bar S_h^{\rm UBU}-S_h^{\rm frozen}=\alpha_h+\beta_h,\qquad
 \alpha_h:=\left(\alpha_h^x,\alpha_h^v\right),\quad
 \beta_h:=\left(\beta_h^x,\beta_h^v\right),\qquad
 \E_s\alpha_h=0,
\end{equation}
with
\[
 \|\alpha_h\|_{L^2(\mathbb P_s)}
 \leq C\sqrt{\eps_t}\left(h^{5/2}+h^{7/2}\right)
 \leq C\sqrt{\eps_t}\,h^{5/2},\qquad
 \|\beta_h\|_{L^2(\mathbb P_s)}
 \leq Ch^3(1+|s|^2).
\]
This conditional decomposition follows the local-error analysis of
Sanz-Serna and Zygalakis \cite[Section~7.6]{SanzSernaZygalakis2021}, while
the associated dimension-dependent weighted inner-product constants are
refined by Paulin and Whalley \cite{PaulinWhalley2024}.

It remains to unfreeze the temperature. The two exact linear half flows may
be realized with the restrictions of a common Brownian path to the two half
intervals. As in the proof of Lemma~\ref{lem:local},
\[
 |\sqrt{\eps_{t+u}}-\sqrt{\eps_t}|
 \leq \frac{Cu}{(t+t_0)\log^{3/2}(t+t_0)}.
\]
It\^o isometry for the velocity component, followed by one integration for
the position component, gives the last term in \eqref{eq:ubulocal}.

For the moment estimate, let
\[
 \mathcal V_{\rm u}(x,v)
 =1+U(x)+\frac12|v|^2+b\,x\cdot v+c|x|^2,
\]
where $b,c>0$ are sufficiently small. Dissipativity and the bounded Hessian
give $\mathcal V_{\rm u}\asymp1+|x|^2+|v|^2$ and, for the exact frozen ULD,
\[
 P^{{\rm u},\eps}_h\mathcal V_{\rm u}^p
 \leq e^{-c_ph}\mathcal V_{\rm u}^p+C_ph,
 \qquad p=1,2.
\]
The following coarse local bound preserves linear dependence on the initial
state:
\begin{equation}\label{eq:ubucoarselocal}
 \left\|\bar S_h^{\rm UBU}-S_h^{\rm frozen}\right\|_{L^q(\mathbb P_s)}
 \leq C_qh^2(1+|s|),\qquad q\geq2,
\end{equation}
where $S^{\rm frozen}$ is the exact ULD with temperature fixed at $\eps_t$.
Indeed, drive its force-free version $\left(X_r^0,V_r^0\right)$ and both UBU half
steps by consecutive pieces of the same Brownian path, and put
$Y=X_{h/2}^0$. The exact force-free formulas for the second half-step give
\[
 \bar V_h^{\rm UBU}=V_h^0-hq_{h/2}\nabla U(Y),\qquad
 \bar X_h^{\rm UBU}=X_h^0-hF_{h/2}\nabla U(Y).
\]
On the other hand, variation of constants for the exact frozen process gives
\[
 V_h^{\rm frozen}=V_h^0-\int_0^h q_{h-r}\nabla U(X_r)\dd r,
 \qquad
 X_h^{\rm frozen}=X_h^0-\int_0^h F_{h-r}\nabla U(X_r)\dd r.
\]
The bounded Hessian and the linear growth of $\nabla U$, together with the
standard finite-time moment estimate, yield uniformly for
$0\leq r\leq h\leq h_*$,
\[
 \left\|X_r-X_r^0\right\|_{L^q(\mathbb P_s)}\leq C_qh^2(1+|s|),\qquad
 \left\|X_r^0-Y\right\|_{L^q(\mathbb P_s)}\leq C_qh(1+|s|).
\]
Since $q_r$ and $F_r$ are Lipschitz on $[0,h_*]$, substituting these two
bounds into the preceding variation-of-constants formulas proves
\eqref{eq:ubucoarselocal}. Moreover, both coupled endpoints have linear
moment growth: for every $q\geq2$ and $0<h\leq h_*$,
\begin{equation}\label{eq:ubuendpointmoments}
 \left\|S_h^{\rm frozen}\right\|_{L^q(\mathbb P_s)}\leq C_q(1+|s|),
 \qquad
 \left\|\bar S_h^{\rm UBU}\right\|_{L^q(\mathbb P_s)}\leq C_q(1+|s|).
\end{equation}
The first bound in \eqref{eq:ubuendpointmoments} is the finite-time moment estimate behind
\eqref{eq:ubuconditionalmoments}, which holds for every $q\geq2$. The second bound in \eqref{eq:ubuendpointmoments}
follows from the first bound, \eqref{eq:ubucoarselocal}, and the triangle
inequality:
\[
 \left\|\bar S_h^{\rm UBU}\right\|_{L^q(\mathbb P_s)}
 \leq\left\|S_h^{\rm frozen}\right\|_{L^q(\mathbb P_s)}
 +\left\|\bar S_h^{\rm UBU}-S_h^{\rm frozen}\right\|_{L^q(\mathbb P_s)}
 \leq C_q(1+|s|)+C_qh^2(1+|s|).
\]
The mean-value theorem gives
\[
 \left|\mathcal V_{\rm u}^p(y)-\mathcal V_{\rm u}^p(y')\right|
 \leq C_p\left(1+|y|^{2p-1}+|y'|^{2p-1}\right)|y-y'|.
\]
Hence, H\"older's inequality, \eqref{eq:ubuendpointmoments} with $q=4p-2$,
and \eqref{eq:ubucoarselocal} with $q=2$ imply, for $p=1,2$,
\[
\begin{aligned}
 \left|Q^{\rm UBU}_{t,h}\mathcal V_{\rm u}^p(s)
       -P^{{\rm u},\eps_t}_h\mathcal V_{\rm u}^p(s)\right|
 &\leq C_p\left(1+\left\|\bar S_h^{\rm UBU}\right\|_{L^{4p-2}(\mathbb P_s)}^{2p-1}
      +\left\|S_h^{\rm frozen}\right\|_{L^{4p-2}(\mathbb P_s)}^{2p-1}\right)
      \left\|\bar S_h^{\rm UBU}-S_h^{\rm frozen}\right\|_{L^2(\mathbb P_s)}\\
 &\leq C_ph^2\left(1+|s|^{2p}\right)
 \leq C_ph^2\mathcal V_{\rm u}^p(s).
\end{aligned}
\]
Consequently,
\[
 Q^{\rm UBU}_{t,h}\mathcal V_{\rm u}^p
 \leq \left(1-c_ph+C_ph^2\right)\mathcal V_{\rm u}^p+C_ph.
\]
For all sufficiently small steps this yields the same discrete Lyapunov
recursion as in Lemma~\ref{lem:local}. The finite fourth moment at time zero
then proves \eqref{eq:ubumoments}, after absorbing the finite initial segment.
\end{proof}

Let $\cH^{\rm u}_\eps$ denote the kinetic distorted entropy inspired by
He, Tan, and Wu \cite[Eq.~(2.7)]{HeTanWu2024}. For the fixed friction
$\gamma_{\rm u}$, let
\[
 M_{\rm u}:=
 \begin{pmatrix}1&1\\1&1\end{pmatrix}\otimes\Id_d,\qquad
 A_{\rm u}:=
 2+\frac{2K+(1+\gamma_{\rm u}+K)^2+1}{\gamma_{\rm u}},
\]
and define
\begin{equation}\label{eq:ulddistorted}
 \cH^{\rm u}_\eps(\nu)
 :=\int_{\R^{2d}}
 \left|\nabla_x\log\frac{\dd\nu}{\dd\pi^{\rm u}_\eps}
       +\nabla_v\log\frac{\dd\nu}{\dd\pi^{\rm u}_\eps}\right|^2\dd\nu
   +\frac{A_{\rm u}}{\eps}\KL(\nu\mid\pi^{\rm u}_\eps).
\end{equation}
This value of $A_{\rm u}$ provides the uniform two-block dissipation
certificate used below to derive the stated convergence exponents.

The final Gaussian half-step 
in the next lemma provides grid-point smoothing and controls the
distorted entropy introduced above.

\begin{lemma}[UBU grid regularization]\label{lem:ubugaussian}
At every positive UBU grid point, the numerical law has a smooth positive
density and finite full relative Fisher information. Moreover, for all
sufficiently large $k$,
\begin{equation}\label{eq:ubugridfisher}
 \cH^{\rm u}_{\eps_{\tau_k}}\left(\mathcal L(\bar S_k)\right)
 \leq Cs_k^{3a}(\log s_k)^2.
\end{equation}
\end{lemma}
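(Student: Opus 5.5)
The plan is to follow the proof of Lemma~\ref{lem:gaussian}, with the UBU covariance now taken from the final Gaussian half step. Write one UBU step from $\bar S_{k-1}=(x,v)$ as a random mean plus independent Gaussian noise. The first half-flow and the gradient kick are deterministic given the first Brownian increment. The second half-flow $\mathcal U^{\eps}_{h/2}$ then adds an independent $\mathcal N(0,\eps_{\tau_{k-1}}\Gamma_{h/2})$ vector, where $\Gamma_r$ is the scalar matrix in \eqref{eq:ubuflowcov} tensored with $\Id_d$. Conditionally on everything before the second half step, $\bar S_k$ is therefore $A+\xi$ with $\xi\sim\mathcal N(0,C_k)$ and $C_k=\eps_{\tau_{k-1}}\Gamma_{h_{k-1}/2}$. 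Consequently the law of $\bar S_k$ is a Gaussian mixture of the form \eqref{eq:mixturedensity}. Smoothness and strict positivity of its density follow by differentiating under the mixture integral.

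\textbf{Covariance lower bound.} Expanding \eqref{eq:ubuflowcov} for small $r$ gives leading entries $2\gamma_{\rm u}r^3/3$, $\gamma_{\rm u}r^2$ and $2\gamma_{\rm u}r$. With $D_r=\operatorname{diag}(r^{3/2},r^{1/2})\otimes\Id_d$, the rescaled matrix $D_r^{-1}\Gamma_rD_r^{-1}$ converges to $2\gamma_{\rm u}\begin{pmatrix}1/3&1/2\\1/2&1\end{pmatrix}\otimes\Id_d$. This limit is the Gram matrix of $(u,1)$ in $L^2(0,1)$, so it is positive definite. Hence, for $h\le h_*\le1$,
\[
\Gamma_{h/2}\succeq c\,\operatorname{diag}(h^3,h)\otimes\Id_d\succeq ch^3\Id_{2d},
\qquad
\det\Gamma_{h/2}\ge ch^{4d}.
\]
This replaces \eqref{eq:covbound}. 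The exponent $3$ in \eqref{eq:ubugridfisher} arises here, in place of the exponent $5$ of the three-level chain.

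\textbf{Bounding the two terms of $\cH^{\rm u}$.} The conditional-Jensen argument for Gaussian mixtures bounds the raw Fisher information by $\operatorname{tr}C_k^{-1}\le C/(\eps_{\tau_{k-1}}h_{k-1}^3)$. For the relative Fisher information with respect to $\pi^{\rm u}_\eps$, the elementary inequality $|\nabla\log(\rho/\pi)|^2\le2|\nabla\log\rho|^2+2\eps^{-2}|\nabla H_{\rm u}|^2$ applies. Combined with the linear growth of $\nabla U$ and the uniform moments \eqref{eq:ubumoments} of Lemma~\ref{lem:ubulocal}, it gives
\[
\cI\bigl(\mathcal L(\bar S_k)\mid\pi^{\rm u}_{\eps_{\tau_k}}\bigr)\le \frac{C}{\eps_{\tau_{k-1}}h_{k-1}^3}+\frac{C}{\eps_{\tau_k}^2}.
\]
Using $\eps\asymp1/\log s_k$ and $h_{k-1}\asymp s_k^{-a}$, since $s_{k}/s_{k-1}\to1$, this is at most $Cs_k^{3a}(\log s_k)^2$. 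The first term of \eqref{eq:ulddistorted} is bounded by $2\cI$, because $M_{\rm u}\preceq2\Id$.

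For the KL term, the decomposition $\KL=\int\rho\log\rho+\nu(H_{\rm u})/\eps+\log\mathcal Z^{\rm u}_\eps$ applies. Convexity of $u\log u$ gives $\int\rho\log\rho\le -\tfrac12\log\det C_k+C\le C\log s_k$. The energy term is at most $C\log s_k$ by the moment bounds, and $\log\mathcal Z^{\rm u}_\eps$ is bounded above. Multiplying the KL bound by $A_{\rm u}/\eps\asymp\log s_k$ yields $C(\log s_k)^2$, which is dominated by the Fisher term. This proves \eqref{eq:ubugridfisher}, and finiteness at every positive grid point follows as well.

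\textbf{Main obstacle.} The only real computation is the small-$r$ expansion of \eqref{eq:ubuflowcov}. The $(1,1)$ entry involves a cancellation: $2r/\gamma_{\rm u}-[4(1-q)-(1-q^2)]/\gamma_{\rm u}^2$ must be expanded to third order to obtain exactly $2\gamma_{\rm u}r^3/3$. This cancellation must be checked carefully so that the $h^3$ scaling, and not a spurious lower power, is correct.
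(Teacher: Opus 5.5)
Your proposal is correct and follows the paper's own proof: both realize $\bar S_k$ as a Gaussian mixture through the final $\mathcal U_{h/2}$ half-flow, and both use the lower bound $\operatorname{diag}(h^3,h)$ (up to constants) on its covariance. Both then bound the raw Fisher information by the mixture-score Jensen argument and the KL term by the Gaussian-mixture entropy bound. Your rescaled Gram-matrix limit is a more explicit version of the Taylor expansion at $h=0$ that the paper only cites.
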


\begin{proof}
Conditionally on the state after the force kick in \eqref{eq:ubustep}, the
last $\mathcal U_{h/2}^{\eps_t}$ substep adds an independent centered Gaussian
with covariance $C^{\rm u}_{t,h}$ given by \eqref{eq:ubuflowcov} with
$r=h/2$. Taylor expansion at $h=0$ gives
\begin{equation}\label{eq:ubucovbound}
 C^{\rm u}_{t,h}\succeq
 c\eps_t\operatorname{diag}(h^3,h)\otimes\Id_d,\qquad
 \det C^{\rm u}_{t,h}\geq c\eps_t^{2d}h^{4d}.
\end{equation}
Thus the endpoint law is a mixture of Gaussians with the same nondegenerate
covariance. The mixture-score argument in Lemma~\ref{lem:gaussian} yields
\[
 \cI^{\rm raw}\left(\mathcal L(\bar S_k)\right)
 \leq\operatorname{tr}\left(\left(C^{\rm u}_{\tau_{k-1},h_{k-1}}\right)^{-1}\right)
 \leq\frac{C}{\eps_{\tau_{k-1}}h_{k-1}^3}.
\]
Since $|\nabla U(x)|\leq C(1+|x|)$, the relative Fisher information is at
most this quantity plus
$C\eps_{\tau_k}^{-2}\E(1+|\bar S_k|^2)$. The entropy-of-a-Gaussian-mixture
argument used in Lemma~\ref{lem:gaussian}, \eqref{eq:ubucovbound}, and
Lemma~\ref{lem:ubulocal} similarly give
\[
 \KL\left(\mathcal L(\bar S_k)\mid\pi^{\rm u}_{\eps_{\tau_k}}\right)
 \leq C\log s_k.
\]
Using $h_{k-1}\asymp s_k^{-a}$ and
$\eps_{\tau_k}^{-1}=\mathcal{O}(\log s_k)$ in \eqref{eq:ulddistorted} proves
\eqref{eq:ubugridfisher}. Smoothness and positivity follow from the final
Gaussian convolution.
\end{proof}

We record in the next lemma the restart decay and semigroup smoothing estimates needed to
propagate the local error over the final time window.

\begin{lemma}[ULD restart and smoothing]\label{lem:uldrestart}
Let $\rho$ be a law on $\R^{2d}$ with second moment bounded by $M_2$,
$\cH^{\rm u}_{\eps_s}(\rho)<\infty$, and finite full relative Fisher
information $\cI(\rho\mid\pi^{\rm u}_{\eps_s})<\infty$. If
$T-s\asymp T^\beta$, $s\asymp T$, and
$\beta>\vartheta$, then, for every sufficiently small $\eta>0$,
\begin{equation}\label{eq:uldrestart}
 \cH^{\rm u}_{\eps_T}(\rho P^{\rm u}_{s,T})
 \leq C\cH^{\rm u}_{\eps_s}(\rho)
          e^{-cT^{\beta-\vartheta-\eta}}
       +C_\eta T^{-1+\vartheta+2\eta},
\end{equation}
with constants uniform over such $\rho$. Moreover, for every bounded
measurable $f$ and $T\geq t+1$,
\begin{equation}\label{eq:uldsmoothing}
 \left|P^{\rm u}_{t,T}f(s)-P^{\rm u}_{t,T}f(s')\right|
 \leq C\eps_{t+1}^{-1/2}|s-s'|\|f\|_\infty.
\end{equation}
For $0\leq T-t\leq1$ and globally Lipschitz $f$,
$\operatorname{Lip}(P^{\rm u}_{t,T}f)\leq C\operatorname{Lip}(f)$.
\end{lemma}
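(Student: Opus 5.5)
The restart estimate \eqref{eq:uldrestart} follows the template of Theorem~\ref{thm:main} and Proposition~\ref{prop:restart}, transplanted to two blocks; the smoothing bounds follow the controlled-coupling argument of Lemma~\ref{lem:smoothing}, simplified because noise now acts on the velocity directly.

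\emph{Restart decay.}
\begin{enumerate}[label=(\alph*)]
\item For the frozen ULD generator I compute, exactly as in \eqref{eq:fisherentropyderivatives}, the time derivative of the weighted Fisher information $\int w^\top M_{\rm u}w\,\dd\nu$ and of $\KL$. Here the entropy dissipation is $-\gamma_{\rm u}\eps\int|w_v|^2\dd\nu$.
\item I then need a two-block certificate. With $J_Q=\begin{pmatrix}0&-I\\ Q&-\gamma_{\rm u}I\end{pmatrix}$ and $\|Q\|\leq K$, I want
\[
-J_QM_{\rm u}-M_{\rm u}J_Q^\top+A_{\rm u}\gamma_{\rm u}P_v\succeq c\,I .
\]
Since $M_{\rm u}$ is only positive semidefinite, the functional \eqref{eq:ulddistorted} controls $|w_x+w_v|^2$ rather than $|w|^2$. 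I would therefore run the computation with $M_{\rm u}$ and then compare with $|w|^2$. The $(x,x)$ entry is $2$ and the cross term is $(1+\gamma_{\rm u}-Q)$; Young's inequality with the stated $A_{\rm u}$ absorbs it, giving a margin $\succeq\frac12 I$. This is the analogue of Lemma~\ref{lem:matrix}.
\item Since $M_{\rm u}$ is degenerate, I bound $\cH^{\rm u}_\eps\leq(2+A_{\rm u}C_{\rm LS}(\eps)/\eps)\,\cI$ by tensorized LSI, using $|w_x+w_v|^2\leq2|w|^2$. This gives $\kappa(\eps)\gtrsim\eps e^{-(D+\eta)/\eps}$ as in \eqref{eq:decay}.
\item The temperature-derivative term is handled exactly as in \eqref{eq:temperatureidentity}–\eqref{eq:coolingexplicit} with ULD moment bounds. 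This yields the differential inequality \eqref{eq:restartode} with $a_\eta(u)=c_\eta u^{-\vartheta-\eta}$.
\item I justify this first for regular initial laws and pass to general $\rho$ via the approximation argument of Lemma~\ref{lem:closure}. That argument transfers verbatim because ULD has divergence $-\gamma_{\rm u}d$ and a Lipschitz drift; the finite full Fisher information hypothesis is what makes $\cH^{\rm u}_{\eps_s}(\rho^{(n)})\to\cH^{\rm u}_{\eps_s}(\rho)$.
\item Integrating over $[s,T]$ with $T-s\asymp T^\beta$ reproduces the computation following \eqref{eq:restart}. The first term gives $\cH^{\rm u}_{\eps_s}(\rho)e^{-cT^{\beta-\vartheta-\eta}}$ and the source term gives $C_\eta T^{-1+\vartheta+2\eta}$.
\end{enumerate}

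\emph{Smoothing.} On a unit interval from $s=(x,v)$, set $e=s'-s$. I choose a cubic polynomial $q$ with $q(0)=e_x$, $q'(0)=e_v$ and $q(1)=q'(1)=0$, and shift to $X^e=X+q$, $V^e=V+q'$. The required control in the velocity equation is
\[
c_e=q''+\gamma_{\rm u}q'-\nabla U(X+q)+\nabla U(X).
\]
This is bounded by $C|e|$ deterministically, using only the Lipschitz gradient. Unlike \eqref{eq:finitecontrolbound}, no factor $(1+|V|)$ appears, which is why \eqref{eq:uldsmoothing} has no $(1+|s|+|s'|)$ weight. Novikov holds trivially, and Girsanov plus endpoint equality gives $P^{\rm u}_{t,t+1}f(s')=\E_s[R_ef]$. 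The path entropy is at most $C|e|^2/\eps_{t+1}$, and Pinsker gives the bound for all $e$, so no partitioning is needed. For $T\geq t+1$ I apply this to $P^{\rm u}_{t+1,T}f$. The short-time Lipschitz bound follows from synchronous coupling and Gr\"onwall.

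The main obstacle is the certificate in step (b). Because $M_{\rm u}$ is rank-deficient, one must check that the dissipation still produces a full $|w|^2$. I would verify this by expanding the quadratic form in $(w_x,w_v)$ and absorbing the cross term $2(1+\gamma_{\rm u}-Q)$ against $A_{\rm u}\gamma_{\rm u}|w_v|^2$, which is exactly what the stated value of $A_{\rm u}$ appears tuned for. If the margin proves insufficient, I would fall back on a strictly positive definite $M_{\rm u}$ and only a comparable functional, at the cost of constants.
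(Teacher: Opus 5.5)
Your proposal is correct and follows the paper's proof essentially step for step: the two-block certificate with the stated $A_{\rm u}$ (which in fact gives $B^{\rm u}_Q\succeq\Id_{2d}$, with $v$-margin $4\gamma_{\rm u}+1$), the LSI comparison using $\|M_{\rm u}\|=2$, the cooling term, integration over $[s,T]$, the approximation step using full Fisher information to get $\sqrt\rho\in H^1$, and a Girsanov shift by a cubic $q$. Your observation that the deterministic bound $|c_e|\leq C|e|$ gives Novikov for every $e$, so the paper's segment partition is unnecessary, is right; the gradient terms in your $c_e$ should read $+\nabla U(X+q)-\nabla U(X)$, which is harmless for the bound, and the approximation step is not quite verbatim when $d=1$, since the state space $\R^2$ requires $H^1(\R^2)\hookrightarrow L^p$ for all finite $p$ in place of the $m\geq3$ Sobolev exponent of Lemma~\ref{lem:closure}.
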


\begin{proof}
We first prove the restart estimate. For a symmetric matrix $Q$ with
$\|Q\|_{\rm op}\leq K$, set
\[
 J_Q^{\rm u}:=
 \begin{pmatrix}0&-\Id_d\\Q&-\gamma_{\rm u}\Id_d\end{pmatrix},
 \qquad P_v:=\operatorname{diag}(0,\Id_d).
\]
Direct multiplication, using the $M_{\rm u}$ and $A_{\rm u}$ fixed above,
gives
\begin{align}\label{eq:uldcertificate}
 B_Q^{\rm u}
 &:=-J_Q^{\rm u}M_{\rm u}
       -M_{\rm u}(J_Q^{\rm u})^\top
       +A_{\rm u}\gamma_{\rm u}P_v\notag\\
 &=
 \begin{pmatrix}
 2\Id_d&(1+\gamma_{\rm u})\Id_d-Q\\
 (1+\gamma_{\rm u})\Id_d-Q&
 (A_{\rm u}+2)\gamma_{\rm u}\Id_d-2Q
 \end{pmatrix}\succeq\Id_{2d}.
\end{align}
Indeed, its quadratic form at $(r,v)\in\R^{2d}$ is bounded below by
\[
 |r|^2+
 \left((A_{\rm u}+2)\gamma_{\rm u}-2K
       -(1+\gamma_{\rm u}+K)^2\right)|v|^2
 =|r|^2+(4\gamma_{\rm u}+1)|v|^2.
\]

Let $H_{\rm u}(x,v):=U(x)+|v|^2/2$ and
$w:=\nabla\log(\dd\nu/\dd\pi^{\rm u}_\eps)$. For a regular density evolving
at frozen temperature, the same Fisher-information differentiation as in
\eqref{eq:fisherentropyderivatives} gives
\begin{align}\label{eq:uldfrozendissipation}
 \frac{\dd}{\dd t}\cH^{\rm u}_\eps(\nu_t)
 ={}&-\int_{\R^{2d}}w^\top B_{\nabla^2U}^{\rm u}w\,\dd\nu_t\notag\\
 &-2\gamma_{\rm u}\eps\sum_{i=1}^d
   \int_{\R^{2d}}(\partial_{v_i}w)^\top
       M_{\rm u}(\partial_{v_i}w)\,\dd\nu_t
 \leq-\int_{\R^{2d}}|w|^2\dd\nu_t .
\end{align}
Tensorization of Assumption~\ref{ass:lsi} with the velocity Gaussian gives
\[
 \cH^{\rm u}_\eps(\nu)
 \leq\left(\|M_{\rm u}\|
       +\frac{A_{\rm u}C_{\rm LS}(\eps)}{\eps}\right)
       \int_{\R^{2d}}|w|^2\dd\nu.
\]
Consequently, for every small $\eta>0$ and all sufficiently large $u$,
the first relation in \eqref{eq:coolingassumption} and the LSI barrier imply
\begin{equation}\label{eq:uldkappalower}
 \kappa_{\rm u}(\eps_u):=
 \left(\|M_{\rm u}\|
       +\frac{A_{\rm u}C_{\rm LS}(\eps_u)}{\eps_u}\right)^{-1}
 \geq c_\eta(u+t_0)^{-\vartheta-\eta}.
\end{equation}

The moment constant is uniform over the restarting laws. Choose $b>0$ small,
set $c=b\gamma_{\rm u}/2$, and add a constant so
that
\[
 \mathcal V_{\rm u}(x,v)
 =1+U(x)+\frac12|v|^2+b\,x\cdot v+c|x|^2
\]
is positive and comparable to $1+|x|^2+|v|^2$. Its generator satisfies
\[
 L^{\rm u}_{\eps}\mathcal V_{\rm u}
 =-(\gamma_{\rm u}-b)|v|^2-b\,x\cdot\nabla U(x)
       +\gamma_{\rm u}\eps d
 \leq-c_0\mathcal V_{\rm u}+C_0,
\]
uniformly for $0<\eps\leq\eps_0$. Thus a second-moment bound at the restart
time propagates uniformly. The same integration-by-parts argument as in
\eqref{eq:gibbsmoments} bounds
$\pi^{\rm u}_\eps(H_{\rm u})$ uniformly.

At a fixed law, differentiation in the temperature gives
\begin{align}\label{eq:uldtemperatureidentity}
 \partial_\eps\cH^{\rm u}_\eps(\nu)
 =-\frac{2}{\eps^2}\int_{\R^{2d}}
       w^\top M_{\rm u}\nabla H_{\rm u}\,\dd\nu
 -\frac{A_{\rm u}}{\eps^2}\KL(\nu\mid\pi^{\rm u}_\eps)
 -\frac{A_{\rm u}}{\eps^3}
       \left(\nu(H_{\rm u})-\pi^{\rm u}_\eps(H_{\rm u})\right).
\end{align}
The uniform moment bounds, Cauchy--Schwarz inequality, and
$\KL\leq\eps\cH^{\rm u}_\eps/A_{\rm u}$ show that the absolute value of
$\eps_u'$ times the right-hand side is at most
\[
 C|\eps_u'|(1+\eps_u^{-3})
       \left(1+\cH^{\rm u}_{\eps_u}(\nu_u)\right)
 \leq C\frac{\log(u+t_0)}{u+t_0}
       \left(1+\cH^{\rm u}_{\eps_u}(\nu_u)\right).
\]
Combining this estimate with \eqref{eq:uldfrozendissipation} and
\eqref{eq:uldkappalower}, and absorbing the coefficient of the functional
into half of the frozen dissipation, yields, after increasing the starting
time,
\begin{equation}\label{eq:uldrestartode}
 \frac{\dd}{\dd u}\cH^{\rm u}_{\eps_u}(\nu_u)
 \leq-c_\eta(u+t_0)^{-\vartheta-\eta}
             \cH^{\rm u}_{\eps_u}(\nu_u)
       +C_\eta(u+t_0)^{-1+\eta}.
\end{equation}
This is the fixed-friction version of the distorted-entropy calculation of
He, Tan, and Wu \cite[Sections~3.2--3.4]{HeTanWu2024}.

On $[s,T]$, where $s\asymp T$ and $T-s\asymp T^\beta$, integration of
\eqref{eq:uldrestartode} gives \eqref{eq:uldrestart}. The initial term is
bounded by
$\cH^{\rm u}_{\eps_s}(\rho)e^{-cT^{\beta-\vartheta-\eta}}$, while the source
convolution satisfies
\[
 C_\eta T^{-1+\eta}\int_0^\infty
 e^{-cT^{-\vartheta-\eta}r}\dd r
 \leq C_\eta T^{-1+\vartheta+2\eta}.
\]

It remains to remove the temporary regularity assumption. The assumed full
relative Fisher information and the second moment imply that the square root
of the density belongs to $H^1(\R^{2d})$. The cutoff, mollification, and
positive-tail construction in Lemma~\ref{lem:closure} therefore gives regular
laws $\rho_n$ converging in $H^1$ at the square-root level and in weighted
$L^1$ at the density level. In the borderline case $d=1$, use
$H^1(\R^2)\hookrightarrow L^p(\R^2)$ for every finite $p$ in place of the
$m\geq3$ Sobolev exponent used there. The entropy and Fisher calculations in
that lemma, with $H,M$ replaced by $H_{\rm u},M_{\rm u}$, then imply
\[
 \cH^{\rm u}_{\eps_s}(\rho_n)
 \longrightarrow\cH^{\rm u}_{\eps_s}(\rho).
\]
The directional Fisher term is lower semicontinuous under weak convergence:
its variational representation is the analogue of \eqref{eq:fisherdual} with
$M$ replaced by the positive semidefinite matrix $M_{\rm u}$. Relative
entropy is lower semicontinuous as well. The moment bound above is uniform in
the approximating sequence, so the constants in \eqref{eq:uldrestartode} are
also uniform. The cutoff localization argument of
Lemma~\ref{lem:closure} justifies the integrated differential inequality for
each $\rho_n$. Passing to the lower limit in that integrated inequality proves
\eqref{eq:uldrestart} for the law stated in the lemma.

For smoothing, write $s'-s=(e_x,e_v)$ and choose a cubic polynomial $q$ on
$[0,1]$ satisfying
\[
 q(0)=e_x,\quad q'(0)=e_v,\quad q(1)=q'(1)=0,
 \qquad \max_{0\leq j\leq2}\left\|q^{(j)}\right\|_\infty\leq C|s-s'|.
\]
If $(X,V)$ starts from $s$, set $X^e=X+q$ and $V^e=V+q'$. The shifted path
starts from $s'$ and has the same endpoint, while its velocity equation has
the additional drift
\[
 c_e(u)=q''(u)+\nabla U(X_u+q(u))-\nabla U(X_u)
                   +\gamma_{\rm u}q'(u).
\]
The bounded Hessian gives $|c_e(u)|\leq C|s-s'|$. Girsanov's theorem and
Pinsker's inequality therefore give, first for
$|s-s'|\leq c\sqrt{\eps_{t+1}}$,
\[
 |P^{\rm u}_{t,t+1}f(s)-P^{\rm u}_{t,t+1}f(s')|
 \leq C\eps_{t+1}^{-1/2}|s-s'|\|f\|_\infty.
\]
Partitioning the line segment from $s$ to $s'$ proves the estimate globally.
The Markov property gives \eqref{eq:uldsmoothing} for $T\geq t+1$, and
synchronous coupling with the globally Lipschitz drift gives the short-time
claim.
\end{proof}

\begin{proof}[Proof of Theorem~\ref{thm:ubu}]
Fix $T=\tau_k$ and choose
\[
 \beta\in(\vartheta,\min\{1,3a/2\}),
\]
which is possible because $a>2\vartheta/3$. Choose $m$ so that
$\tau_m\leq T-T^\beta<\tau_{m+1}$. Then
$T-\tau_m\asymp T^\beta$, $s_j\asymp T$, and $h_j\asymp T^{-a}$ throughout
the comparison window. Lemmas~\ref{lem:ubulocal} and
\ref{lem:ubugaussian}, including the full Fisher-information conclusion of
the latter, allow Lemma~\ref{lem:uldrestart} to be applied at $\tau_m$ and give
\begin{equation}\label{eq:uburestartbound}
 \cH^{\rm u}_{\eps_T}\left(\nu_mP^{\rm u}_{\tau_m,T}\right)
 \leq CT^{3a}(\log T)^2e^{-cT^{\beta-\vartheta-\eta}}
       +C_\eta T^{-1+\vartheta+2\eta},
\end{equation}
where $\nu_m=\mathcal L(\bar S_m)$.

Choose the same smooth event approximation $f$ as in the proof of
Theorem~\ref{thm:discrete}. Lemma~\ref{lem:uldrestart} implies that
$g_j=P^{\rm u}_{\tau_{j+1},T}f$ has Lipschitz constant at most
$C_{\delta,\zeta}\sqrt{\log T}$; the short terminal intervals are handled by
the Lipschitz regularity of $f$. Telescoping the UBU and exact ULD kernels,
then using Lemma~\ref{lem:ubulocal}, gives
\begin{align}\label{eq:ubutelescoping}
 \left|\nu_kf-\nu_mP^{\rm u}_{\tau_m,T}f\right|
 &\leq C\sum_{j=m}^{k-1}\left(
       h_j^{5/2}+\sqrt{\log T}\,h_j^3
       +\frac{h_j^{3/2}}{T\log T}\right)\notag\\
 &\leq C\left(
       T^{\beta-3a/2}
       +\sqrt{\log T}\,T^{\beta-2a}
       +\frac{T^{\beta-1-a/2}}{\log T}\right).
\end{align}
Here the factor $\sqrt{\log T}\asymp\eps_T^{-1/2}$ cancels the
$\sqrt{\eps_{\tau_j}}$ in the leading local stochastic error.

As in the proof of Theorem~\ref{thm:discrete}, Pinsker's inequality,
$\KL(\cdot\mid\pi^{\rm u}_{\eps_T})\leq\eps_T\cH^{\rm u}_{\eps_T}(\cdot)/A_{\rm u}$,
\eqref{eq:uburestartbound}, and the Gibbs tail estimate \eqref{eq:gibbstail}
give
\[
 \nu_mP^{\rm u}_{\tau_m,T}f
 \leq CT^{-(\delta-\zeta-\zeta')/E}
 +CT^{3a/2}\log T\,e^{-cT^{\beta-\vartheta-\eta}/2}
 +C_\eta T^{-(1-\vartheta)/2+\eta}.
\]
Here \eqref{eq:gibbstail} applies to $\pi^{\rm u}_{\eps_T}$ because its
position marginal is also $\mu_{\eps_T}$. The middle term is the
stretched-exponential term; it comes from the square root of the first term
on the right-hand side of \eqref{eq:uburestartbound} and, as after
\eqref{eq:stretchedexponential}, is at most $C_NT^{-N}$ for every fixed $N$. Combining these bounds with
\eqref{eq:ubutelescoping} gives
\begin{align*}
 \mathbb P\left(U(\bar X_k)>\delta\right)\leq C\bigg(&
 T^{-(\delta-\zeta-\zeta')/E}
 +T^{-(1-\vartheta)/2+\eta}
 +T^{-(3a/2-\beta)}\\
 &+\sqrt{\log T}\,T^{-(2a-\beta)}
 +(\log T)^{-1}T^{-(1+a/2-\beta)}+T^{-N}\bigg).
\end{align*}
Let $\beta\downarrow\vartheta$ and
$\zeta,\zeta',\eta\downarrow0$. The exponent $2a-\vartheta$ is larger than
$3a/2-\vartheta$. The temperature-variation exponent is also nonbinding,
since
\[
 1+\frac a2-\vartheta-\frac{1-\vartheta}{2}
 =\frac{1+a-\vartheta}{2}>0.
\]
This proves \eqref{eq:uburate}; the grid
asymptotics give the iteration statement.
\end{proof}

\begin{remark}[Local error comparison between UBU and the third-order scheme]
\label{rem:ubunoisetiming}
Although both UBU and \eqref{eq:midpoint} use one interior force evaluation,
their force arguments have different stochastic regularity. In UBU, noise
enters velocity directly and reaches position at size
$\mathcal{O}(\sqrt\eps\,h^{3/2})$ before the force kick, producing the
$\mathcal{O}(\sqrt\eps\,h^{5/2})$ term in \eqref{eq:ubulocal}. In the third-order
chain, noise enters $Z$ and reaches position only after two further ordinary
integrations, at size $\mathcal{O}(\sqrt\eps\,h^{5/2})$; its contribution to the force
integral is therefore $\mathcal{O}(\sqrt\eps\,h^{7/2})$, below the deterministic
$\mathcal{O}(h^3)$ endpoint error. The comparison is between the sufficient conditions
proved by the present strong-coupling argument. A weak-error or martingale
analysis that exploits the centering of the UBU $h^{5/2}$ term could produce
a sharper UBU bound.
\end{remark}

\end{document}